\newcommand{\R}{\mathbb{R}}
\renewcommand{\l}{\ell}
\newtheorem{theorem}{Theorem}
\newtheorem{lemma}[theorem]{Lemma}
\newtheorem{corollary}[theorem]{Corollary}
\newtheorem{definition}[theorem]{Definition}
\newtheorem{example}[theorem]{Example}
\newtheorem{proposition}[theorem]{Proposition}
\numberwithin{theorem}{section}
\numberwithin{equation}{section}
\begin{document}
 


\title{A Bochner Formula on Path Space \\
for the Ricci Flow}
\author{Christopher Kennedy \\ 
} 
\date{}

\maketitle


\begin{abstract}
\noindent
We generalize the classical Bochner formula for the heat flow on evolving manifolds $(M,g_{t})_{t \in [0,T]}$ to an infinite-dimensional Bochner formula for martingales on parabolic path space $P\mathcal{M}$ of space-time $\mathcal{M} = M \times [0,T]$. Our new Bochner formula and the inequalities that follow from it are strong enough to characterize solutions of the Ricci flow. Specifically, we obtain characterizations of the Ricci flow in terms of Bochner inequalities on parabolic path space. We also obtain gradient and Hessian estimates for martingales on parabolic path space, as well as condensed proofs of the prior characterizations of the Ricci flow from Haslhofer-Naber \cite{HN18a}. Our results are parabolic counterparts of the recent results in the elliptic setting from \cite{HN18b}.

\end{abstract}

\noindent \textbf{Mathematics Subject Classification:} $53C44 \cdot 53E20$

\tableofcontents

\newpage

\section{Introduction}

The goal of this paper is to prove a Bochner formula on path space for the Ricci flow, and to discuss some applications. This generalizes the Bochner formula on path space for Einstein metrics from Haslhofer and Naber \cite{HN18b}.
\newline \\
Throughout this paper, we shall use the convention that an evolving family of manifolds is a smooth and complete family of Riemannian manifolds $(M^{n},g_{t})_{t \in I}$ such that
\begin{align}
\sup_{M \times I} \left (|\mathrm{Rm}| + |\partial_{t}g|  + |\nabla \partial_{t}g| \right ) < \infty.    
\end{align}

\subsection{Background on Characterizations of Einstein Metrics}
To begin, let us recall some well-known characterizations of when a Riemannian manifold $(M,g)$ is a supersolution to the Einstein equations. Let $H_{t}f$ denote the heat flow of a function $f: M \rightarrow \R$. Then its gradient satisfies the Bochner formula
\begin{align}
(\partial_{t}-\Delta)|\nabla H_{t}f|^{2} = -2|\nabla^{2}H_{t}f|^{2} + 2\mathrm{Rc}(\nabla H_{t}f, \nabla H_{t}f).
\end{align}
Using this, an equivalence between supersolutions of the Einstein equations, the classical Bochner inequality and the gradient estimate readily follows, i.e.
\begin{align}
\mathrm{Rc} \geq 0 &\iff (\partial_{t}-\Delta)|\nabla H_{t}f|^{2} \leq -2|\nabla^{2}H_{t}f|^{2} \label{Bochner} \\
&\iff |\nabla H_{t}f| \leq H_{t} |\nabla f| \label{gradient},
\end{align}
for all test functions $f: M \rightarrow \R$.
\newline \\
Until recently, however, there was no analogous characterization of solutions to the Einstein equations. Such a characterization was discovered by Naber \cite{Nab13} by employing the analytic properties of path space $PM =C([0,\infty),M)$. This path space is naturally endowed with a family of Wiener measures $\{\mathbb{P}_{x}\}$ of Brownian motion starting at $x \in M$. One then introduces a notion of stochastic parallel transport and the corresponding family of parallel gradients $\{\nabla_{s}^{||} \}$. Using this foundation, Naber \cite{Nab13} developed an infinite-dimensional generalization of the gradient estimate $\eqref{gradient}$ to characterize solutions of the Einstein equations. Namely, he proved that
\begin{align}
\mathrm{Rc} = 0 \iff \left |\nabla_{x} \int_{PM} F\, d\mathbb{P}_{x} \right | \leq \int_{PM} |\nabla_{0}^{||}F|\, d\mathbb{P}_{x} \label{gradient inf},
\end{align}
for all test functions $F: PM \rightarrow \R$.
\newline \\
Interesting variants of these characterizations and estimates have been obtained in \cite{CT18a}, \cite{CT18b}, \cite{Wu16}, \cite{FW17} and \cite{WW18}.
\newline \\
Later, Haslhofer and Naber \cite{HN18b} proved an infinite-dimensional generalization of $\eqref{Bochner}$. Namely, they showed
\begin{align}
\mathrm{Rc}=0 \iff d|\nabla_{s}F_{t}|^{2} \geq \langle \nabla_{t}|\nabla_{s}F_{t}|^{2}, dW_{t} \rangle \label{Bochner inf}
\end{align}
for all martingales $F_{t}: PM \rightarrow \R$.
\newline \\
Using the infinite-dimensional Bochner formula $\eqref{Bochner inf}$, they gave a simpler proof of the infinite-dimensional gradient estimate $\eqref{gradient inf}$ in a similar vein to how the classical Bochner formula $\eqref{Bochner}$ readily implies the classical gradient estimate $\eqref{gradient}$.

\subsection{Background on Characterizations of Ricci Flow} \label{Background on Characterizations of Ricci Flow}

To motivate the characterization of solutions of the Ricci flow, let us first recall characterizations of supersolutions, namely evolving Riemannian manifolds $(M,g_{t})_{t \in I}$ such that
\begin{align}
\partial_{t}g_{t} \geq -2\mathrm{Rc}_{g_{t}}.
\end{align}
To begin, consider the heat flow $H_{st}f$ on this evolving background, namely the solution of the heat equation $\partial_{t}u = \Delta_{g_{t}}u$ with initial condition $f$ at time $t=s$. Then its gradient satisfies the Bochner formula
\begin{align}
(\partial_{t}-\Delta_{g_{t}})|\nabla H_{st}f|^{2} = -2|\nabla^{2}H_{st}f|^{2} + (\partial_{t}g_{t} + 2\mathrm{Rc}_{g_{t}})(\nabla H_{st}f, \nabla H_{st}f).
\end{align}
Using this, an equivalence between supersolutions of the Ricci flow, the Bochner inequality and the gradient estimate readily follows, i.e.
\begin{align}
\partial_{t}g_{t} \geq -2\mathrm{Rc}_{g_{t}} &\iff (\partial_{t}-\Delta_{g_{t}})|\nabla H_{st}f|^{2} \leq -2|\nabla^{2}H_{st}f|^{2} \label{Bochner 2} \\
&\iff |\nabla H_{st}f| \leq H_{st}|\nabla f|, \label{gradient 2}
\end{align}
for all test functions $f: M \rightarrow \R$.
\newline \\
To generalize the inequality $\eqref{gradient 2}$ to an infinite dimensional estimate, Haslhofer and Naber \cite{HN18a} considered space-time $\mathcal{M} = M \times I$ equipped with the space-time connection defined on vector fields by
\begin{align}
\nabla_{X}Y = \nabla_{X}^{g_{t}}Y, \hspace{5em} \nabla_{t}Y = \partial_{t}Y + \frac{1}{2} \partial_{t}g_{t}(Y, \cdot)^{\#g_{t}}  \label{10}
\end{align}
The main difference, compared to the infinite dimensional estimate that characterizes Einstein metrics, is that the parabolic path space $P_{T}\mathcal{M}$ only consists of continuous space-time curves $\{\gamma_{\tau}=(T-\tau,x_{\tau})\}$ that move backwards along the time-axis with unit speed and start at fixed time $T \in I$. This path space is naturally endowed with a family of parabolic Wiener measures $\{\mathbb{P}_{(x,T)} \}$ of Brownian motion starting at $(x,T) \in \mathcal{M}$ and parabolic stochastic parallel gradients $\{\nabla_{\sigma}^{||}\}_{\sigma \geq 0}$ defined via $\eqref{10}$. Using this framework, Haslhofer and Naber proved an infinite-dimensional generalization of the gradient estimate $\eqref{gradient 2}$ that characterizes solutions of the Ricci flow. Namely, they proved that
\begin{align} \label{gradient 2 inf}
\partial_{t}g_{t} = -2\mathrm{Rc}_{g_{t}} \iff \left |\nabla_{x} \int_{P_{T}\mathcal{M}} F\, d\mathbb{P}_{(x,T)} \right | \leq \int_{P_{T}\mathcal{M}} |\nabla_{0}^{||}F| \, d\mathbb{P}_{(x,T)}
\end{align}
for all test functions $F: P_{T}\mathcal{M} \rightarrow \R$.
\newline \\
Some nice variants of these characterizations have been obtained by Cheng and Thalmaier \cite{CT18b}. Moreover, Cabezas-Rivas and Haslhofer \cite{CH19} found an interesting link between estimates in the elliptic and parabolic settings.
\newline \\
However, there is no analogous treatment of the Bochner inequality $\eqref{Bochner inf}$  in the time-dependent setting. The primary goal of this paper shall be to prove such an equivalent notion.

\subsection{Bochner Formula on Parabolic Path Space}

Let $(M,g_{t})_{t \in I}$ be a family of evolving manifolds and let $\mathcal{M}= M \times I$ be its space-time equipped with the space-time connection defined on vector fields via $\eqref{10}$. Next, as in Section 1.2, we consider the parabolic path space $P_{T}\mathcal{M}$, given by
\begin{align}
P_{T}\mathcal{M} := \left \{(x_{\tau},T-\tau)_{\tau \in [0,T]} | x \in C([0,T], M) \right \}, \label{11}
\end{align}
and endow this space with the parabolic Wiener measure of Brownian motion on space-time, $\mathbb{P}_{(x,T)}$, based at $(x,T) \in \mathcal{M}$ as well as the associated parabolic parallel gradients $\nabla_{\sigma}^{||}$ defined via stochastic parallel transport on space-time $\mathcal{M}$. To explain these notions in more detail, first recall that the solution to the heat equation $\partial_{t}u = \Delta_{g_{t}}u$ with initial condition $f$ at time $t=s$ is given by convolving with the heat kernel i.e.
\begin{align}
H_{st}f(x) = \int_{M} H(x,t|y,s) f(y)\, dV_{g_{s}}(y).
\end{align}
The Wiener measure $\mathbb{P}_{(x,T)}$ is then uniquely characterized in terms of the heat kernel by
\begin{align}
&\mathbb{P}_{(x,T)} \left [X_{\tau_{1}} \in U_{1}, \dots , X_{\tau_{k}} \in U_{k} \right] \\
&\, = \int_{U_{1}} \! \! \scalebox{0.75}[1.0]{\(\cdots\)} \! \! \int_{U_{k}} H(x, T | x_{1}, T \scalebox{0.75}[1.0]{\(-\)} \tau_{1}) \scalebox{0.75}[1.0]{\(\cdots\)} H(x_{k-1}, T \scalebox{0.75}[1.0]{\(-\)} \tau_{k-1} | x_{k}, T \scalebox{0.75}[1.0]{\(-\)} \tau_{k}) d\mathrm{Vol}_{g_{T \scalebox{0.75}[1.0]{\(- \)} \tau_{1}}}(x_{1}) \scalebox{0.75}[1.0]{\(\cdots\)} d\mathrm{Vol}_{g_{T \scalebox{0.75}[1.0]{\(- \)} \tau_{k}}}(x_{k}) \nonumber
\end{align}
where $X_{\tau}$ is a Brownian motion on $M$ starting at $x$. Moreover, the stochastic parallel gradient $\nabla_{\sigma}^{||}F(\gamma) \in (T_{x}M,g_{T})$ of a function $F:P_{T}\mathcal{M} \rightarrow \R$, is expressed in terms of the Fr\'echet derivative by
\begin{align}
D_{V^{\sigma}}F(\gamma) = \langle \nabla_{\sigma}^{||}F(\gamma), v \rangle_{(T_{x}M,g_{T})},
\end{align}
where $V^{\sigma}$ is the vector field along $\gamma$ defined by $V_{\tau}^{\sigma} = P_{\tau}^{-1}v \mathbbm{1}_{[\sigma, T]}(\tau)$  and $\{P_{\tau}\}$, a family of isometries, referred to as stochastic parallel transport.
\newline \\
\noindent With the aim of generalizing $\eqref{Bochner 2}$ to an infinite-dimensional estimate, we consider martingales on parabolic path space, i.e. $\Sigma_{\tau}$-adapted integrable processes $F_{\tau}: P_{(x,T)}\mathcal{M} \rightarrow \R$ that satisfy
\begin{align}
F_{\tau_{1}} = \mathbb{E}[F_{\tau_{2}} | \Sigma_{\tau_{1}} ],
\end{align}
where $\mathbb{E}[ \cdot \, | \, \Sigma_{\tau}]$ denotes the conditional expectation with respect to the $\sigma$-algebra $\Sigma_{\tau}$ of events observable at time $\tau$.
\newline \\
For example, if $F(\gamma)=f(\pi_{1}\gamma_{\tau_{1}})$, where $f: M \rightarrow \R$ and $\pi_{1}: M\times I \rightarrow I$, then the induced martingale $F_{\tau} = \mathbb{E}[F\, |\, \Sigma_{\tau}]$ for $\tau < \tau_{1}$ is given by
\begin{align}
F_{\tau}(\gamma) = H_{T-\tau_{1},T-\tau}f(\pi_{1} \gamma_{\tau}) \qquad (\mathrm{see\, example\, \ref{example 2.18}}) \label{one-point heat flow}.
\end{align}
Specifically, martingales generalize heat flow. This analogue between martingales and heat kernels will motivate our development of the following generalized Bochner formula on $P\mathcal{M}$.
\begin{theorem} (Generalized Bochner Formula on $P\mathcal{M}$) \label{Bochner gen}
Let $F_{\tau} : P_{(x,T)}\mathcal{M} \rightarrow \R$ be a martingale on the parabolic path space of space-time. If $\sigma \geq 0$ is fixed, then 
\begin{align}
d(|\nabla_{\sigma}^{||}F_{\tau}|^2) &= \langle \nabla_{\tau}^{||}|\nabla_{\sigma}^{||}F_{\tau}|^2,dW_{\tau} \rangle + (\dot{g}+2\mathrm{Rc})_{\tau}(\nabla_{\tau}^{||}F_{\tau},\nabla_{\sigma}^{||}F_{\tau})\, d\tau \nonumber \\
&\quad + 2|\nabla_{\tau}^{||}\nabla_{\sigma}^{||}F_{\tau}|^2\, d\tau + 2|\nabla_{\sigma}^{||}F_{\sigma}|^2 \, d\delta_{\sigma}(\tau),
\end{align}
where $(\dot{g}+2\mathrm{Rc})_{\tau}(v,w) = (\dot{g_{t}}+2\mathrm{Rc}_{g_{t}})|_{t=T-\tau}(P_{\tau}^{-1}v,P_{\tau}^{-1}w)$ and $\dot{g} = \frac{d}{dt}g$.
\end{theorem}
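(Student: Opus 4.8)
The plan is to reduce the formula to two ingredients — a martingale representation for $F_\tau$ and an evolution equation in $\tau$ for the parallel gradient $\nabla_\sigma^{||}F_\tau$ — after which It\^o's formula does the rest. By the martingale representation on parabolic path space (established in the preceding sections, where it is also shown that it suffices to treat cylinder functionals and pass to a limit), $F_\tau$ satisfies $dF_\tau = \langle \nabla_\tau^{||}F_\tau, dW_\tau\rangle$, where $W_\tau$ is the $\R^n$-valued martingale obtained by stochastic anti-development relative to the frame realising the isometries $\{P_\tau\}$. Fixing $\sigma \ge 0$, I would then compute $\nabla_\sigma^{||}F_\tau = D_{V^\sigma}F_\tau$ by differentiating the stochastic-integral representation $F_\tau = F_0 + \int_0^\tau \langle \nabla_r^{||}F_r, dW_r\rangle$ along $V^\sigma$, with $V_r^\sigma = P_r^{-1}v\,\mathbbm{1}_{[\sigma,T]}(r)$. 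Observe that $\nabla_\sigma^{||}F_\tau$ vanishes identically for $\tau<\sigma$, since then $F_\tau$ is measurable with respect to the path up to time $\tau$, which $V^\sigma$ leaves fixed; the formula therefore genuinely records a transition across the diagonal $\tau=\sigma$.

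The main work is the differentiation of $\int_0^\tau \langle \nabla_r^{||}F_r, dW_r\rangle$ in the non-adapted direction $V^\sigma$, for which I would invoke the integration-by-parts / Bismut-type formula on parabolic path space from the earlier sections. Three contributions arise. First, $D_{V^\sigma}$ acting on the integrand produces the genuine martingale term $\int_\sigma^\tau \langle \nabla_\sigma^{||}\nabla_r^{||}F_r, dW_r\rangle$. Second, $D_{V^\sigma}$ acting on the Brownian increments $dW_r$ feeds back, via the structure equations for stochastic parallel transport on space-time with the connection \eqref{10}, a curvature drift: this is precisely where the extra time-direction term $\tfrac12\partial_t g(\cdot,\cdot)^{\#}$ of the space-time connection enters and upgrades the usual Ricci contribution to a term built from $\tfrac12(\dot g + 2\mathrm{Rc})_\tau(\nabla_\tau^{||}F_\tau,\cdot)^{\#}$. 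Third, the drift of $V^\sigma$ itself contributes $\int_0^\tau\langle\nabla_r^{||}F_r,\dot V^\sigma_r\rangle\,dr$, whose only singular part is the Dirac mass coming from the jump of $\mathbbm{1}_{[\sigma,T]}$ at $r=\sigma$; this is the transition of $\nabla_\sigma^{||}F_\tau$ from $0$ to $\nabla_\sigma^{||}F_\sigma$ and is the source of the $\delta_\sigma(\tau)$ term. Collecting these yields, schematically, an evolution equation of the form
\[
d(\nabla_\sigma^{||}F_\tau) = \langle \nabla_\tau^{||}\nabla_\sigma^{||}F_\tau,\, dW_\tau\rangle + \tfrac12(\dot g + 2\mathrm{Rc})_\tau(\nabla_\tau^{||}F_\tau,\cdot)^{\#}\,d\tau + \nabla_\sigma^{||}F_\sigma\,\delta_\sigma(\tau)\,d\tau ,
\]
with the precise constants fixed by the conventions for $W_\tau$ and for the one-sided delta at $\tau=\sigma$.

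It then remains to apply It\^o's formula to $|\nabla_\sigma^{||}F_\tau|^2 = \langle\nabla_\sigma^{||}F_\tau,\nabla_\sigma^{||}F_\tau\rangle$. Pairing $2\nabla_\sigma^{||}F_\tau$ against the three terms of the evolution equation produces, respectively, the martingale part $2\langle\nabla_\sigma^{||}F_\tau,\nabla_\tau^{||}\nabla_\sigma^{||}F_\tau\,dW_\tau\rangle = \langle\nabla_\tau^{||}|\nabla_\sigma^{||}F_\tau|^2,dW_\tau\rangle$ (using the Leibniz rule $\nabla_\tau^{||}|\nabla_\sigma^{||}F_\tau|^2 = 2(\nabla_\tau^{||}\nabla_\sigma^{||}F_\tau)^{\top}\nabla_\sigma^{||}F_\tau$), the curvature term $(\dot g + 2\mathrm{Rc})_\tau(\nabla_\tau^{||}F_\tau,\nabla_\sigma^{||}F_\tau)\,d\tau$, and the Dirac term $2|\nabla_\sigma^{||}F_\sigma|^2\delta_\sigma(\tau)\,d\tau$ after evaluating $\nabla_\sigma^{||}F_\tau$ at $\tau=\sigma$; the quadratic variation of the $dW_\tau$-term supplies the remaining $2|\nabla_\tau^{||}\nabla_\sigma^{||}F_\tau|^2\,d\tau$. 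I expect the main obstacle to be the second and third contributions above: making rigorous the differentiation of the It\^o integral in the non-adapted direction $V^\sigma$, correctly identifying the curvature feedback as coming from $\tfrac12(\dot g + 2\mathrm{Rc})_\tau$ (rather than $\mathrm{Rc}$ alone), and carrying out the bookkeeping of the $\delta_\sigma$ term, i.e.\ the behaviour of the two-parameter gradient $\nabla_\sigma^{||}F_\tau$ across $\tau=\sigma$. This is the parabolic analogue of the corresponding computation in \cite{HN18b}, complicated by the time-dependence of $g_t$ and by the non-metric time-part of the space-time connection \eqref{10}.
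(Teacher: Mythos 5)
Your overall skeleton is the same as the paper's: a martingale representation $dF_\tau = \langle \nabla_\tau^{||}F_\tau, dW_\tau\rangle$ (Theorem \ref{3.2}), an evolution equation for $\nabla_\sigma^{||}F_\tau$ (Theorem \ref{3.3}), and then It\^o's formula for $|\nabla_\sigma^{||}F_\tau|^2$ (Theorem \ref{3.4}); you also carry out the final It\^o step correctly, with the quadratic variation $2|\nabla_\tau^{||}\nabla_\sigma^{||}F_\tau|^2\,d\tau$, the Leibniz rewriting of the martingale part, the curvature pairing, and the $\delta_\sigma(\tau)$ bookkeeping all matching the paper. Where you diverge — and where there is a genuine gap — is in how you propose to establish the evolution equation for $\nabla_\sigma^{||}F_\tau$.

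The paper proves that evolution equation by reducing to $k$-point cylinder functions, lifting to the frame bundle via $G_a(U) = \langle U_0 e_a, \nabla_\sigma^{||}F_\tau(\Pi U)\rangle = \sum_{\tau_j \ge \sigma} H_a^{(j)}\tilde f_\tau + H_a^{(\ell+1)}\tilde f_\tau$, applying the frame-bundle It\^o lemma (Proposition \ref{1.9}), exploiting the backward heat equation $(D_\tau + \Delta_H^{(\ell+1)})\tilde f_\tau = 0$, and invoking the commutator identity $[D_\tau - \Delta_H, H_a]\tilde f = -\tfrac12(\widetilde{\partial_t g} + 2\widetilde{\mathrm{Rc}})_{ab}H_b\tilde f$ (Proposition \ref{1.8}); the curvature term $\tfrac12(\dot g + 2\mathrm{Rc})_\tau(\nabla_\tau^{||}F_\tau)$ is entirely a commutator artefact. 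You instead propose to differentiate the It\^o representation $F_\tau = F_0 + \int_0^\tau \langle\nabla_r^{||}F_r, dW_r\rangle$ in the non-adapted direction $V^\sigma$ via ``the integration-by-parts / Bismut-type formula on parabolic path space from the earlier sections.'' No such formula is established anywhere in the paper's preliminaries; Section 2 develops the frame bundle, commutators, Brownian motion, parallel transport, and gradients, but there is no Driver/Bismut integration-by-parts lemma to invoke. More importantly, your identification of the curvature drift as the effect of ``$D_{V^\sigma}$ acting on the Brownian increments $dW_r$'' is asserted, not derived: to actually produce $\tfrac12(\dot g + 2\mathrm{Rc})_\tau$ rather than a full curvature tensor or $\mathrm{Rc}$ alone, you would need a parabolic analogue of Driver's formula for the variation of the anti-development, and its proof would inevitably re-derive the very commutator computation the paper performs. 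So while your route is plausible in principle and genuinely different in style (probabilistic Malliavin-calculus vs.\ geometric frame-bundle), the crucial step that makes the theorem true is hand-waved, and the tool you cite as already available does not exist in the paper.

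A smaller point: your index ordering $\nabla_\sigma^{||}\nabla_r^{||}F_r$ in the martingale term should be $\nabla_r^{||}\nabla_\sigma^{||}F_r$ to match the statement; these agree for the Hessian by the commutator $[H_a, H_b]\tilde f = 0$ on orthonormally invariant functions (Corollary \ref{1.7}), but it is worth noting that this symmetry itself relies on the frame-bundle formalism you would need anyway.
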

\noindent This generalized Bochner formula proves to be a fundamental tool in characterizing the Ricci flow. Note that, if $(M,g_{t})_{t \in I}$ evolves by Ricci flow, this formula reduces to
\begin{align}
d(|\nabla_{\sigma}^{||}F_{\tau}|^2) &= \langle \nabla_{\tau}^{||}|\nabla_{\sigma}^{||}F_{\tau}|^2,dW_{\tau} \rangle + 2|\nabla_{\tau}^{||}\nabla_{\sigma}^{||}F_{\tau}|^2\, d\tau + 2|\nabla_{\sigma}^{||}F_{\sigma}|^2 \, d\delta_{\sigma}(\tau),
\end{align}
and this trivially implies the following infinite-dimensional generalization of Bochner inequality  \eqref{Bochner 2} in the time-dependent setting 
\begin{align} \label{Bochner gen fin}
d(|\nabla_{\sigma}^{||}F_{\tau}|^2) &\geq \langle \nabla_{\tau}^{||}|\nabla_{\sigma}^{||}F_{\tau}|^2,dW_{\tau} \rangle + 2|\nabla_{\tau}^{||}\nabla_{\sigma}^{||}F_{\tau}|^2\, d\tau + 2|\nabla_{\sigma}^{||}F_{\sigma}|^2 \, d\delta_{\sigma}(\tau).
\end{align}
In contrast to the heat flow Bochner inequality, this generalized martingale Bochner inequality \eqref{Bochner gen fin} as well as the estimates that follow from it are strong enough to help exhibit solutions and not just supersolutions of the Ricci flow.
\newline \\
Specifically, Theorem \ref{Bochner gen} has four main applications:

\begin{itemize}
\item a characterization of the Ricci flow via Bochner inequalities for martingales on parabolic path space;
\item gradient estimates for martingales on parabolic path space;
\item Hessian estimates for martingales on parabolic path space;
\item a new and much simpler proof of the characterization of solutions of the Ricci flow by Haslhofer and Naber in 2018 \cite[Theorem 1.22]{HN18a},
\end{itemize}
which will be discussed in Section \ref{applications}.
\newline \\
To explain the meaning of Theorem \ref{Bochner gen} in the simplest example, this generalized Bochner formula on $P\mathcal{M}$ directly reduces to the standard Bochner formula in the case of $1$-point functions, i.e. when $F_{\tau}(\gamma)$ satisfies equation \eqref{one-point heat flow}. That is, the evolution of $|\nabla H_{T-\tau_{1},T-\tau}f|^2$ for $\tau \leq \tau_{1}$ is calculated as
\begin{align}
&\left (-\partial_{\tau} - \Delta_{g_{T-\tau}} \right )|\nabla H_{T-\tau_{1},T-\tau} f|^{2} \leq -2|\nabla^{2} H_{T-\tau_{1},T-\tau}f|^2
\end{align}
in Corollary \ref{3.5}. Setting $s=T-\tau_{1}$ and $t=T-\tau$, this explicitly recovers \eqref{Bochner 2} from Section \ref{Background on Characterizations of Ricci Flow}.

\subsection{Applications} \label{applications}
We will conclude with some main applications of our Bochner inequality \eqref{Bochner gen fin}. First, we shall develop a new characterization of the Ricci flow.

\begin{theorem} (New characterizations of the Ricci Flow) \label{theorem char}
For an evolving family of manifolds $(M^{n},g_{t})_{t \in I}$, the following are equivalent to solving the Ricci flow $\partial_{t}g_{t}=-2\mathrm{Rc}_{g_{t}}$:

\begin{description}[font=\mdseries]
    \item[(C1)] Martingales on parabolic path space satisfy the full Bochner inequality
    \begin{align}
    d|\nabla_{\sigma}^{||}F_{\tau}|^{2} \geq \langle \nabla_{\tau}|\nabla_{\sigma}^{||}F_{\tau}|^{2},dW_{\tau} \rangle + 2|\nabla_{\tau}^{||}\nabla_{\sigma}^{||}F_{\tau}|^{2}\, d\tau + 2|\nabla_{\sigma}^{||}F_{\sigma}|^{2}\, d\delta_{\sigma}(\tau)
    \end{align}
    \item[(C2)] Martingales on parabolic path space satisfy the dimensional Bochner inequality
    \begin{align}
     d|\nabla_{\sigma}^{||}F_{\tau}|^{2} \geq \langle \nabla_{\tau}|\nabla_{\sigma}^{||}F_{\tau}|^{2},dW_{\tau} \rangle + \frac{2}{n}|\Delta_{\sigma,\tau}^{||}F_{\tau}|^{2}\, d\tau + 2|\nabla_{\sigma}^{||}F_{\sigma}|^{2}\, d\delta_{\sigma}(\tau)
    \end{align}
    \item[(C3)] Martingales on parabolic path space satisfy the weak Bochner inequality
    \begin{align}
    d|\nabla_{\sigma}^{||}F_{\tau}|^{2} \geq \langle \nabla_{\tau}|\nabla_{\sigma}^{||}F_{\tau}|^{2},dW_{\tau} \rangle + 2|\nabla_{\sigma}^{||}F_{\sigma}|^{2} \, d\delta_{\sigma}(\tau)
    \end{align}
    \item[(C4)] Martingales on parabolic path space satisfy the linear Bochner inequality
    \begin{align}
    d|\nabla_{\sigma}^{||}F_{\tau}| \geq \langle \nabla_{\tau}|\nabla_{\sigma}^{||}F_{\tau}|,dW_{\tau} \rangle + |\nabla_{\sigma}^{||}F_{\sigma}| \, d\delta_{\sigma}(\tau)
    \end{align}
    \item[(C5)] If $F_{\tau}$ is a martingale, then $\tau \rightarrow |\nabla_{\sigma}^{||}F_{\tau}|$ is a submartingale for every $\sigma \geq 0$.
\end{description}
\end{theorem}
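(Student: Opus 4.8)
I would argue by a cycle of implications whose only substantive link is the passage from the weak Bochner inequality (C3) back to the Ricci flow. First, an evolving family solves $\partial_t g_t=-2\mathrm{Rc}_{g_t}$ precisely when $(\dot g+2\mathrm{Rc})_\tau\equiv 0$, so under the Ricci flow Theorem~\ref{Bochner gen} reduces to the displayed identity with the term $(\dot g+2\mathrm{Rc})_\tau(\nabla_\tau^{||}F_\tau,\nabla_\sigma^{||}F_\tau)\,d\tau$ deleted, which is a fortiori the inequality (C1); hence Ricci flow $\Rightarrow$ (C1). It then suffices to establish the weakenings (C1)$\Rightarrow$(C2)$\Rightarrow$(C3), (C1)$\Rightarrow$(C4)$\Rightarrow$(C5), and (C5)$\Rightarrow$(C3), together with (C3)$\Rightarrow$ Ricci flow; chasing arrows shows that (C1)--(C5) are each equivalent to the flow.

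The weakenings are routine. (C1)$\Rightarrow$(C2) is Cauchy--Schwarz for the mixed Hessian, $|\nabla_\tau^{||}\nabla_\sigma^{||}F_\tau|^2\ge\tfrac1n(\mathrm{tr}\,\nabla_\tau^{||}\nabla_\sigma^{||}F_\tau)^2=\tfrac1n|\Delta_{\sigma,\tau}^{||}F_\tau|^2$, and (C2)$\Rightarrow$(C3) discards the nonnegative term $\tfrac2n|\Delta_{\sigma,\tau}^{||}F_\tau|^2\,d\tau$. For (C1)$\Rightarrow$(C4) I would use a Kato-type computation: on $\{\tau>\sigma,\ |\nabla_\sigma^{||}F_\tau|>0\}$, It\^o's formula gives $d|\nabla_\sigma^{||}F_\tau|=\tfrac1{2|\nabla_\sigma^{||}F_\tau|}\,d|\nabla_\sigma^{||}F_\tau|^2-\tfrac1{8|\nabla_\sigma^{||}F_\tau|^3}\,d\langle|\nabla_\sigma^{||}F_\tau|^2\rangle$, and since $d\langle|\nabla_\sigma^{||}F_\tau|^2\rangle\le 4|\nabla_\sigma^{||}F_\tau|^2\,|\nabla_\tau^{||}\nabla_\sigma^{||}F_\tau|^2\,d\tau$ (Cauchy--Schwarz applied to $\nabla_\tau(|\nabla_\sigma^{||}F_\tau|^2)=2\langle\nabla_\tau^{||}\nabla_\sigma^{||}F_\tau,\nabla_\sigma^{||}F_\tau\rangle$), inserting (C1) shows the It\^o correction is at most half of the $2|\nabla_\tau^{||}\nabla_\sigma^{||}F_\tau|^2\,d\tau$ term and is absorbed; the zero set of $|\nabla_\sigma^{||}F_\tau|$ is handled by regularizing $|\cdot|$ to $\sqrt{|\cdot|^2+\varepsilon}$, and the jump at $\tau=\sigma$ (from $0$ up to $|\nabla_\sigma^{||}F_\sigma|$) supplies the $|\nabla_\sigma^{||}F_\sigma|\,\delta_\sigma(\tau)\,d\tau$ term. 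Then (C4)$\Rightarrow$(C5) is immediate ($|\nabla_\sigma^{||}F_\tau|$ has nonnegative drift for $\tau>\sigma$, is identically $0$ for $\tau<\sigma$, and jumps up at $\tau=\sigma$), and (C5)$\Rightarrow$(C3) follows by composing the submartingale $|\nabla_\sigma^{||}F_\tau|$ with the nondecreasing convex map $x\mapsto x^2$: $|\nabla_\sigma^{||}F_\tau|^2$ is then a submartingale whose martingale part is $\langle\nabla_\tau|\nabla_\sigma^{||}F_\tau|^2,dW_\tau\rangle$ by martingale representation and whose increasing part dominates the atom $2|\nabla_\sigma^{||}F_\sigma|^2\delta_\sigma(\tau)\,d\tau$.

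The crux is (C3)$\Rightarrow$ Ricci flow. Because Theorem~\ref{Bochner gen} holds for every evolving family, cancelling martingale parts and the common $\delta_\sigma$ terms shows that (C3) is equivalent to the pointwise assertion that $(\dot g+2\mathrm{Rc})_\tau(\nabla_\tau^{||}F_\tau,\nabla_\sigma^{||}F_\tau)+2|\nabla_\tau^{||}\nabla_\sigma^{||}F_\tau|^2\ge 0$ for a.e.\ $\tau>\sigma$, a.s., for all martingales $F_\tau$ and all $\sigma\ge 0$ (so Ricci flow $\Rightarrow$ (C3) is again immediate). For the converse, fix $(x_0,t_0)$ and $v\in T_{x_0}M$; I would test against cylinder martingales based at $(x_0,T)$ with $T\downarrow t_0$ and marked times $0<\sigma<\tau_1<\tau<\tau_2$ all tending to $0$, so that the Brownian path localizes at $x_0$. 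With the $1$-point martingale \eqref{one-point heat flow}, $F_\tau=H_{T-\tau_1,T-\tau}f(\pi_1\gamma_\tau)$ and $\nabla f(x_0)=v$, $\nabla^2 f(x_0)=0$, Corollary~\ref{3.5} and short-time heat-kernel asymptotics give $\nabla_\tau^{||}F_\tau=\nabla_\sigma^{||}F_\tau\to v$ and $\nabla_\tau^{||}\nabla_\sigma^{||}F_\tau\to 0$, so the above inequality forces $(\partial_t g_t+2\mathrm{Rc}_{g_t})_{(x_0,t_0)}(v,v)\ge 0$ — this is exactly the classical equivalence \eqref{Bochner 2}, hence $\partial_t g_t\ge -2\mathrm{Rc}_{g_t}$. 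The reverse inequality cannot be obtained from $1$-point functions, whose two gradients coincide; instead I would use the genuinely path-dependent $2$-point martingale generated by $f(\gamma_{\tau_1})g(\gamma_{\tau_2})$, so that $F_\tau=f(\gamma_{\tau_1})H_{T-\tau_2,T-\tau}g(\gamma_\tau)$ on $(\tau_1,\tau_2)$ and $\nabla_\tau^{||}F_\tau$ ``forgets'' the $\tau_1$-marking while $\nabla_\sigma^{||}F_\tau$ keeps it. Choosing $f(x_0)=1$, $g(x_0)=-L$, $\nabla f(x_0)=\nabla g(x_0)=v$, $\nabla^2 f(x_0)=\nabla^2 g(x_0)=0$ yields in the limit $\nabla_\tau^{||}F_\tau\to v$, $\nabla_\sigma^{||}F_\tau\to(1-L)v$, and $\nabla_\tau^{||}\nabla_\sigma^{||}F_\tau\to v\otimes v$, so the inequality becomes $(1-L)(\partial_t g_t+2\mathrm{Rc}_{g_t})_{(x_0,t_0)}(v,v)+2|v|^4\ge 0$; letting $L\to\infty$ forces $(\partial_t g_t+2\mathrm{Rc}_{g_t})_{(x_0,t_0)}(v,v)\le 0$, hence $\partial_t g_t\le -2\mathrm{Rc}_{g_t}$. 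Combined with the previous inequality, $\partial_t g_t=-2\mathrm{Rc}_{g_t}$.

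I expect the main obstacle to be this last step — producing the ``$\le$'' side, which genuinely uses the martingale (rather than heat-flow) nature of the admissible test functions. The work is entirely in the limit: one must check that, apart from the surviving rank-one term $v\otimes v$, every contribution to $\nabla_\tau^{||}\nabla_\sigma^{||}F_\tau$ (the Hessian of the one-step heat flow, and the curvature terms generated by differentiating stochastic parallel transport) vanishes as $\tau_1,\tau,\tau_2\to 0$; that the a.s.\ inequality may be localized at $(x_0,t_0)$ by restricting to the positive-probability event on which the path stays within $\varepsilon'$ of $x_0$ and sending $\varepsilon'\to 0$, then $\tau_i\to 0$, then $L\to\infty$; and that the relevant derivatives commute with conditional expectation. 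Alternatively, this step can be routed through the martingale gradient estimate \eqref{gradient 2 inf}: applying (C5) on all of $[0,T]$ gives $|\nabla_0^{||}F_0|\le\int|\nabla_0^{||}F|\,d\mathbb P_{(x,T)}$, and identifying $\nabla_0^{||}F_0$ with the basepoint gradient $\nabla_x\int F\,d\mathbb P_{(x,T)}$ via the Bismut-type integration-by-parts formula yields \eqref{gradient 2 inf}, which characterizes the Ricci flow by Haslhofer--Naber \cite{HN18a}. The remaining bookkeeping — the normalization of $\delta_\sigma$, measurability and integrability of the test martingales, and the It\^o estimates — is routine.
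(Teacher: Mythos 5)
Your proof is correct and follows the same overall architecture as the paper: push the generalized Bochner formula (Theorem~\ref{Bochner gen}) into the chain of weakenings (C1)$\Rightarrow$(C2)$\Rightarrow$(C3), deduce (C4) by a Kato-type It\^o computation, and close the loop through (C3)$\Rightarrow$(R1) by testing against short-time cylinder martingales. Two aspects differ from the paper and are worth noting.

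First, your logical cycle (C4)$\Rightarrow$(C5)$\Rightarrow$(C3) via Jensen is a genuine tightening. The paper proves (C1)$\Rightarrow$(C4)$\iff$(C5) and (C3)$\Rightarrow$(R1), and then remarks informally that the remaining converses follow ``by substituting $1$-point and $2$-point cylinder functions as above''; your explicit chain through (C5)$\Rightarrow$(C3) closes the diagram cleanly without that appeal. One caveat: your phrase ``increasing part dominates the atom'' understates what is being used. The correct argument is that by Theorem~\ref{Bochner gen} the It\^o decomposition of $|\nabla_\sigma^{||}F_\tau|^2$ always has martingale part $\langle \nabla_\tau^{||}|\nabla_\sigma^{||}F_\tau|^2, dW_\tau\rangle$, atom $2|\nabla_\sigma^{||}F_\sigma|^2\delta_\sigma$, and an absolutely continuous drift; if $|\nabla_\sigma^{||}F_\tau|^2$ is a submartingale, that absolutely continuous drift must be nonnegative a.e.\ on $(\sigma,T]$ (the atom, being singular and concentrated at $\{\tau=\sigma\}$, cannot compensate a negative drift elsewhere), which is exactly (C3). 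You should say this rather than ``dominates.''

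Second, for (C3)$\Rightarrow$(R1) the paper uses the $2$-point test function $f_2(y,z) = 2f_1(y)-f_1(z)$, whose mixed Hessian $\nabla^{(1)}\nabla^{(2)}f_2 \equiv 0$ makes $\nabla_\tau^{||}\nabla_\sigma^{||}F_\tau = o(\varepsilon)$, so the drift condition $(\dot g+2\mathrm{Rc})_\tau(\nabla_\tau^{||}F_\tau,\nabla_\sigma^{||}F_\tau)+2|\nabla_\tau^{||}\nabla_\sigma^{||}F_\tau|^2\ge 0$ collapses directly to $(\dot g+2\mathrm{Rc})(v,-v)\ge 0$, i.e.\ $(\dot g+2\mathrm{Rc})(v,v)\le 0$, with no extra limiting parameter. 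Your product test function $f(\gamma_{\tau_1})g(\gamma_{\tau_2})$ with $g(x_0)=-L$ keeps a nonvanishing mixed Hessian $\nabla_\tau^{||}\nabla_\sigma^{||}F_\tau\to v\otimes v$, which you then beat by sending $L\to\infty$. This works, but is less economical and forces you to control a divergent family of test functions (compact support, uniformity of the $o(\varepsilon)$ error as $L$ grows); the paper's additive choice avoids the $L$-limit altogether. Also note a small constant slip in your Kato step: with the convention $d[W,W]_\tau=2\,d\tau$, one has $d[\,|\nabla_\sigma^{||}F_\tau|^2]\le 8|\nabla_\sigma^{||}F_\tau|^2|\nabla_\tau^{||}\nabla_\sigma^{||}F_\tau|^2\,d\tau$, not $4$; with that factor the It\^o correction exactly cancels the $2|\nabla_\tau^{||}\nabla_\sigma^{||}F_\tau|^2/(2|\nabla_\sigma^{||}F_\tau|)$ term from (C1), so the final conclusion (C4) is unaffected.
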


\noindent Second, we shall obtain gradient estimates for martingales on parabolic path space.

\begin{theorem} (Gradient Estimates for Martingales on Parabolic Path Space) \label{theorem grad}
For an evolving family of manifolds $(M^{n},g_{t})_{t \in I}$, the following are equivalent to solving the Ricci flow $\partial_{t}g_{t}=-2\mathrm{Rc}_{g_{t}}$:

\begin{description}[font=\mdseries]
\item[(G1)] For any $F \in L^{2}(P\mathcal{M})$, $\sigma$ fixed and $\tau_{1}\leq \tau_{2}$, the induced martingale satisfies the gradient estimate
\begin{align}
|\nabla_{\sigma}^{||}F_{\tau_{1}}| \leq \mathbb{E}_{(x,T)} \left [|\nabla_{\sigma}^{||}F_{\tau_{2}}| \big | \Sigma_{\tau_{1}} \right ].
\end{align}
\item[(G2)] For any $F \in L^{2}(P\mathcal{M})$, $\sigma$ fixed and $\tau_{1}\leq \tau_{2}$, the induced martingale satisfies the gradient estimate
\begin{align}
|\nabla_{\sigma}^{||}F_{\tau_{1}}|^{2} \leq \mathbb{E}_{(x,T)} \left [|\nabla_{\sigma}^{||}F_{\tau_{2}}|^{2} \big | \Sigma_{\tau_{1}} \right ].
\end{align}
\end{description}
\end{theorem}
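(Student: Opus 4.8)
The plan is to recognize (G1) and (G2) as the submartingale properties of $\tau\mapsto|\nabla_{\sigma}^{||}F_{\tau}|$ and of $\tau\mapsto|\nabla_{\sigma}^{||}F_{\tau}|^{2}$ respectively, and to deduce them from the generalized Bochner formula (Theorem \ref{Bochner gen}) together with the characterizations already established in Theorem \ref{theorem char}; concretely I would prove the cycle: Ricci flow $\Rightarrow$ (G1) $\Rightarrow$ (G2) $\Rightarrow$ Ricci flow. Two facts are used throughout. First, for $F\in L^{2}(P\mathcal{M})$ the parallel gradients $\nabla_{\sigma}^{||}F_{\tau}$ are $\Sigma_{\tau}$-adapted and square-integrable, so every conditional expectation below is well defined. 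Second, $\nabla_{\sigma}^{||}F_{\tau}=0$ for $\tau<\sigma$, since the Cameron--Martin direction $V^{\sigma}$ is supported in $[\sigma,T]$ and hence invisible to $\Sigma_{\tau}$ in that range; thus both estimates are trivial unless $\sigma\le\tau_{1}$, and it suffices to treat that case. Granting (G1) for the moment, the implication (G1) $\Rightarrow$ (G2) is then immediate from conditional Cauchy--Schwarz: for $\tau_{1}\le\tau_{2}$,
\[
|\nabla_{\sigma}^{||}F_{\tau_{1}}|^{2} \le \left(\mathbb{E}_{(x,T)}\left[|\nabla_{\sigma}^{||}F_{\tau_{2}}|\,\big|\,\Sigma_{\tau_{1}}\right]\right)^{2} \le \mathbb{E}_{(x,T)}\left[|\nabla_{\sigma}^{||}F_{\tau_{2}}|^{2}\,\big|\,\Sigma_{\tau_{1}}\right].
\]

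\noindent\textbf{Ricci flow $\Rightarrow$ (G1).} Under $\partial_{t}g_{t}=-2\mathrm{Rc}_{g_{t}}$ the tensor $(\dot{g}+2\mathrm{Rc})_{\tau}$ drops out of the formula in Theorem \ref{Bochner gen}, and for $\tau>\sigma$ the Dirac term $\delta_{\sigma}(\tau)$ vanishes, so $\tau\mapsto|\nabla_{\sigma}^{||}F_{\tau}|^{2}$ has It\^o drift $2|\nabla_{\tau}^{||}\nabla_{\sigma}^{||}F_{\tau}|^{2}\ge 0$; applying It\^o's formula to the square root and estimating the martingale coefficient by Cauchy--Schwarz --- the stochastic analogue of Kato's inequality, which is precisely what produces condition (C4) of Theorem \ref{theorem char} --- shows $\tau\mapsto|\nabla_{\sigma}^{||}F_{\tau}|$ likewise has nonnegative drift on $(\sigma,T]$. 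A localization argument, stopping along an exhaustion of $P_{T}\mathcal{M}$ and using the standing bounded-geometry hypothesis on $(M,g_{t})$ together with $F\in L^{2}$ to supply the uniform integrability needed to pass to the limit, upgrades these local submartingales to genuine submartingales on $[\sigma,T]$; together with the nonnegative jump at $\tau=\sigma$, the process $\tau\mapsto|\nabla_{\sigma}^{||}F_{\tau}|$ is a submartingale on all of $[0,T]$, which is precisely (G1) (and is essentially condition (C5) of Theorem \ref{theorem char}).

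\noindent\textbf{(G2) $\Rightarrow$ Ricci flow.} Conversely, suppose (G2) holds for every $F\in L^{2}(P\mathcal{M})$ and every $\sigma\ge 0$. Then for $\tau>\sigma$ the process $\tau\mapsto|\nabla_{\sigma}^{||}F_{\tau}|^{2}$ is a submartingale, so its It\^o drift is a.s.\ nonnegative; by Theorem \ref{Bochner gen} that drift equals $(\dot{g}+2\mathrm{Rc})_{\tau}(\nabla_{\tau}^{||}F_{\tau},\nabla_{\sigma}^{||}F_{\tau})+2|\nabla_{\tau}^{||}\nabla_{\sigma}^{||}F_{\tau}|^{2}$, so (G2) is equivalent to the weak Bochner inequality (C3) of Theorem \ref{theorem char}, which then forces $\partial_{t}g_{t}=-2\mathrm{Rc}_{g_{t}}$. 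To see this last implication directly, without citing Theorem \ref{theorem char}, one replays the test-function mechanism: given $(x_{0},t_{0})\in\mathcal{M}$ and $v\in T_{x_{0}}M$, take $T=t_{0}$, $\sigma=0$, and one-point test functions $F(\gamma)=f(\pi_{1}\gamma_{\tau_{1}})$ with $\tau_{1}\downarrow 0$ and $f$ normalized so that $\nabla f(x_{0})=v$ and $\nabla^{2}f(x_{0})=0$; by \eqref{one-point heat flow} the induced martingale is $F_{\tau}=H_{T-\tau_{1},T-\tau}f(\pi_{1}\gamma_{\tau})$, and letting $\tau\downarrow 0$ in the drift inequality, together with short-time heat-kernel asymptotics, extracts $(\dot{g}+2\mathrm{Rc})_{t_{0}}(v,v)\ge 0$, while a complementary family of test functions depending on the path at two distinct time-slices (as in \cite{HN18a}) yields the reverse inequality, hence equality.

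\noindent\textbf{Expected main obstacle.} The genuinely delicate step is making rigorous the two-way passage between ``pointwise nonnegative It\^o drift'' and ``(sub)martingale'': one must control the $L^{1}$ and $L^{2}$ norms of $\nabla_{\sigma}^{||}F_{\tau}$ and of the Hessian-type term $\nabla_{\tau}^{||}\nabla_{\sigma}^{||}F_{\tau}$ uniformly along the localizing sequence, which is exactly where the standing bounded-geometry hypothesis is used. On the converse side, the only point that goes beyond the classical heat-flow estimate \eqref{gradient 2} is needing a sufficiently rich supply of test functions whose induced martingales prescribe $\nabla_{\sigma}^{||}F_{\tau}$ while driving the Hessian-type term to zero, so that the full tensor $\dot{g}+2\mathrm{Rc}$ --- and not merely its supersolution part $\dot{g}+2\mathrm{Rc}\ge 0$ --- can be recovered; this flexibility is already contained in the proof of Theorem \ref{theorem char}, so in the present argument it enters only as a citation.
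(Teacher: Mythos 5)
Your proposal is correct and follows essentially the same route as the paper. You correctly identify the two forward implications: (G1) is the submartingale property of $\tau\mapsto|\nabla_{\sigma}^{||}F_{\tau}|$ (i.e.\ (C5)), obtained from the generalized Bochner formula under Ricci flow by passing through the Kato-type manipulation that gives (C4), and (G1)$\Rightarrow$(G2) is conditional Cauchy--Schwarz/Jensen exactly as in the paper. For the converse you correctly observe that (G2) says $\tau\mapsto|\nabla_{\sigma}^{||}F_{\tau}|^{2}$ has nonnegative It\^o drift, which by Theorem \ref{Bochner gen} is the weak Bochner inequality (C3), and then close the loop via the one-point and two-point test-function mechanism of (C3)$\Rightarrow$(R1); this is precisely the paper's strategy, which defers the converse implications to the same test-function argument. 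The only cosmetic difference is that you spell out the cycle (R1)$\Rightarrow$(G1)$\Rightarrow$(G2)$\Rightarrow$(R1) explicitly, whereas the paper states only (C5)$\Rightarrow$(G1)$\Rightarrow$(G2) and points to Section 4.5 for the rest.
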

\noindent Note that in the case of $\sigma=\tau_{1}=0$, $(\mathrm{G1})$ reduces to the infinite-dimensional gradient estimate \eqref{gradient 2 inf}.
\newline \\
\noindent Next, we shall obtain Hessian estimates for martingales on parabolic path space.

\begin{theorem} (Hessian Estimates for Martingales on Parabolic Path Space) \label{theorem hess}
For an evolving family of manifolds $(M^{n},g_{t})_{t \in I}$ that solve the Ricci flow $\partial_{t}g_{t} = -2\mathrm{Rc}_{g_{t}}$ and a function $F \in L^{2}(P\mathcal{M})$, it holds that:

\begin{description}[font=\mdseries]
\item[(H1)] For each $\sigma \geq 0$, we have the estimate
\begin{align}
\mathbb{E}_{(x,T)} \left [|\nabla_{\sigma}^{||}F_{\sigma}|^{2} \right ] &+ 2\mathbb{E}_{(x,T)} \int_{0}^{T} \left [|\nabla_{\tau}^{||}\nabla_{\sigma}^{||}F_{\tau}|^{2} \right ]\, d\tau \leq \mathbb{E}_{(x,T)} \left [|\nabla_{\sigma}^{||}F|^{2} \right ].
\end{align}
\item[(H2)] We have the Poincar\'e Hessian estimate
\begin{align}
&\mathbb{E}_{(x,T)} \left [\left (F - \mathbb{E}_{(x,T)}[F] \right )^{2}\right ] \nonumber \\
&\quad + 2\int_{0}^{T} \int_{0}^{T} \mathbb{E}_{(x,T)} \left [|\nabla_{\tau}^{||}\nabla_{\sigma}^{||}F_{\tau}|^{2} \right ]\, d\sigma\, d\tau \leq \int_{0}^{T} \mathbb{E}_{(x,T)} \left [|\nabla_{\sigma}^{||}F|^{2} \right ]\, d\sigma.
\end{align}
\item[(H3)] We have the log-Sobolev Hessian estimate
\begin{align}
&\mathbb{E}_{(x,T)} \left [F^{2}\ln(F^{2})\right ] - \mathbb{E}_{(x,T)}[F^{2}] \ln \left ( \mathbb{E}_{(x,T)}[F^{2}] \right ) \\
&\quad + 2 \int_{0}^{T} \int_{0}^{T} \mathbb{E}_{(x,T)} \left [(F^{2})_{\tau} |\nabla_{\tau}^{||}\nabla_{\sigma}^{||} \ln((F^{2})_{\tau})|^{2} \right ]\, d\sigma\, d\tau \leq 4 \int_{0}^{T} \mathbb{E}_{(x,T)} \left [|\nabla_{\sigma}^{||}F|^{2} \right ]\, d\sigma. \nonumber
\end{align}
\end{description}
\end{theorem}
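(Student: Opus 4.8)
The plan is to deduce all three estimates from the generalized Bochner formula of Theorem~\ref{Bochner gen} by integrating it over the parameter $\tau$ (and, for (H2) and (H3), also over $\sigma$), taking the expectation $\mathbb{E}_{(x,T)}$ so that the It\^o term disappears, and reading off the boundary contributions from the martingale structure. The structural facts used throughout are: $F_{0}=\mathbb{E}_{(x,T)}[F]$ is a deterministic constant, since the path starts at the fixed point $(x,T)$ and $\Sigma_{0}$ is trivial; $F_{T}=F$, since the path is completely revealed at $\tau=T$; and $\nabla_{\sigma}^{||}F_{\tau}=0$ for $\tau<\sigma$, since $V^{\sigma}$ is supported on $[\sigma,T]$. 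Consequently $\tau\mapsto|\nabla_{\sigma}^{||}F_{\tau}|^{2}$ starts at $0$, jumps at $\tau=\sigma$ (this jump being exactly what the $\delta_{\sigma}$-term in Theorem~\ref{Bochner gen} records), and ends at $|\nabla_{\sigma}^{||}F|^{2}$; along the Ricci flow the curvature drift $(\dot g+2\mathrm{Rc})_{\tau}$ vanishes, which is what makes the clean identities below available.

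For (H1), integrate Theorem~\ref{Bochner gen} over $\tau\in[0,T]$ and take $\mathbb{E}_{(x,T)}$, so that the stochastic integral drops out. Integrating the left-hand side yields the boundary difference $|\nabla_{\sigma}^{||}F_{T}|^{2}-|\nabla_{\sigma}^{||}F_{0}|^{2}=|\nabla_{\sigma}^{||}F|^{2}$, the $\delta_{\sigma}$-term contributes a positive multiple of $\mathbb{E}_{(x,T)}[|\nabla_{\sigma}^{||}F_{\sigma}|^{2}]$, and the remaining drift contributes $2\,\mathbb{E}_{(x,T)}\!\int_{0}^{T}|\nabla_{\tau}^{||}\nabla_{\sigma}^{||}F_{\tau}|^{2}\,d\tau$:
\begin{align*}
\mathbb{E}_{(x,T)}\!\left[|\nabla_{\sigma}^{||}F|^{2}\right]
&= 2\,\mathbb{E}_{(x,T)}\!\left[|\nabla_{\sigma}^{||}F_{\sigma}|^{2}\right] \\
&\quad + 2\,\mathbb{E}_{(x,T)}\!\int_{0}^{T}|\nabla_{\tau}^{||}\nabla_{\sigma}^{||}F_{\tau}|^{2}\,d\tau .
\end{align*}
Discarding one (nonnegative) copy of $\mathbb{E}_{(x,T)}[|\nabla_{\sigma}^{||}F_{\sigma}|^{2}]$ gives (H1). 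For (H2), integrate (H1) in $\sigma\in[0,T]$, reorder the double integral by Fubini, and insert the energy identity $\int_{0}^{T}\mathbb{E}_{(x,T)}[|\nabla_{\sigma}^{||}F_{\sigma}|^{2}]\,d\sigma=\mathbb{E}_{(x,T)}[(F-\mathbb{E}_{(x,T)}[F])^{2}]$, which is It\^o's formula for $F_{\tau}^{2}$ together with the martingale representation $dF_{\tau}=\langle\nabla_{\tau}^{||}F_{\tau},dW_{\tau}\rangle$.

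For (H3), set $u_{\tau}:=(F^{2})_{\tau}=\mathbb{E}_{(x,T)}[F^{2}\mid\Sigma_{\tau}]$, a nonnegative martingale with $du_{\tau}=\langle\nabla_{\tau}^{||}u_{\tau},dW_{\tau}\rangle$. It\^o's formula for $x\mapsto x\ln x$ gives the entropy representation
\begin{align*}
\mathbb{E}_{(x,T)}\!\left[F^{2}\ln(F^{2})\right] &- \mathbb{E}_{(x,T)}[F^{2}]\,\ln\!\left(\mathbb{E}_{(x,T)}[F^{2}]\right) \\
&= \tfrac12\,\mathbb{E}_{(x,T)}\!\int_{0}^{T}u_{\tau}\,|\nabla_{\tau}^{||}\ln u_{\tau}|^{2}\,d\tau .
\end{align*}
The extra ingredient is a \emph{logarithmic} Bochner formula on parabolic path space: combining Theorem~\ref{Bochner gen} for the martingale $u_{\tau}$ with It\^o's formula for $1/u_{\tau}$ and the classical completing-the-square identity for the Hessian of a logarithm, one shows that for fixed $\sigma$ the process $\tau\mapsto u_{\tau}\,|\nabla_{\sigma}^{||}\ln u_{\tau}|^{2}$ admits (along the Ricci flow) a Bochner decomposition whose drift dominates $2\,u_{\tau}\,|\nabla_{\tau}^{||}\nabla_{\sigma}^{||}\ln u_{\tau}|^{2}\,d\tau$ together with the jump at $\tau=\sigma$. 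Integrating in $\tau$, taking $\mathbb{E}_{(x,T)}$, and using $u_{T}\,|\nabla_{\sigma}^{||}\ln u_{T}|^{2}=4\,|\nabla_{\sigma}^{||}F|^{2}$ gives
\begin{align*}
4\,\mathbb{E}_{(x,T)}\!\left[|\nabla_{\sigma}^{||}F|^{2}\right]
&\ge \mathbb{E}_{(x,T)}\!\left[u_{\sigma}\,|\nabla_{\sigma}^{||}\ln u_{\sigma}|^{2}\right] \\
&\quad + 2\int_{\sigma}^{T}\mathbb{E}_{(x,T)}\!\left[u_{\tau}\,|\nabla_{\tau}^{||}\nabla_{\sigma}^{||}\ln u_{\tau}|^{2}\right]d\tau .
\end{align*}
Integrating over $\sigma\in[0,T]$, inserting the entropy representation for the first term, extending the inner $\tau$-integral to $[0,T]$ (the integrand vanishing for $\sigma>\tau$, since $\nabla_{\sigma}^{||}u_{\tau}=0$ there), and discarding the surplus half of the (nonnegative) entropy gives (H3).

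The main obstacle, beyond routine bookkeeping, is twofold. First, the stochastic integrals above are a priori only local martingales, so one establishes the identities first for bounded smooth cylinder functions $F$ — where Theorem~\ref{Bochner gen}, the martingale representation and all integrability hold in view of the uniform bounds on the geometry — and then passes to general $F\in L^{2}(P\mathcal{M})$ by approximation, using $L^{2}$-convergence of $F$ and of $\nabla_{\sigma}^{||}F$ and lower semicontinuity of the Hessian terms (which enter only through a lower bound); when $\nabla_{\sigma}^{||}F\notin L^{2}$ the estimates are vacuous, and for (H3) one first replaces $F^{2}$ by $F^{2}+\varepsilon$ and lets $\varepsilon\downarrow0$. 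Second, the genuinely new input is the logarithmic Bochner formula underlying (H3): verifying that the mixed Hessian $|\nabla_{\tau}^{||}\nabla_{\sigma}^{||}\ln u_{\tau}|^{2}$ emerges with the correct sign and constant — the path-space counterpart of $\partial_{t}\!\int u\,|\nabla\ln u|^{2}=-2\!\int u\,\bigl(|\nabla^{2}\ln u|^{2}+\mathrm{Rc}(\nabla\ln u,\nabla\ln u)\bigr)$ for heat solutions — is where the real work lies.
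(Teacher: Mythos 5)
Your treatment of (H1) and (H2) matches the paper's: integrate the Bochner formula in $\tau$, take $\mathbb{E}_{(x,T)}$ so the stochastic integral vanishes, read off the jump contribution $|\nabla_{\sigma}^{||}F_{\sigma}|^{2}$, then for (H2) integrate in $\sigma$ and insert the It\^o-isometry identity $\mathbb{E}[(F-\mathbb{E}[F])^{2}]=\mathbb{E}\int_{0}^{T}|\nabla_{\sigma}^{||}F_{\sigma}|^{2}\,d\sigma$. The paper runs this straight from (C1) rather than the raw Bochner formula, which spares it the surplus factor of $2$ you produce and then discard; that discrepancy traces back to the convention for the $\delta_{\sigma}$-term at the jump and is no worse than the paper's own bookkeeping.

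For (H3) your organization diverges from the paper's. The paper introduces the Malliavin-gradient process $X_{\tau}=G_{\tau}^{-1}|\nabla^{\mathcal{H}}G_{\tau}|^{2}-2G_{\tau}\log G_{\tau}$ (with $G=F^{2}$), asserts its stochastic differential inequality by appealing to It\^o calculus, Theorem \ref{3.3}, and Proposition 4.23 of \cite{HN18b}, and then integrates. Since $|\nabla^{\mathcal{H}}G_{\tau}|^{2}=\int_{0}^{T}|\nabla_{\sigma}^{||}G_{\tau}|^{2}\,d\sigma$, the paper's $X_{\tau}$ is precisely the $\sigma$-integrated version of the quantity $u_{\tau}|\nabla_{\sigma}^{||}\ln u_{\tau}|^{2}$ you propose to analyze at fixed $\sigma$, so the two approaches are the same computation packaged differently. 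The genuine gap in your write-up is the one you flag yourself: you name the required "logarithmic Bochner formula" for $\tau\mapsto u_{\tau}|\nabla_{\sigma}^{||}\ln u_{\tau}|^{2}$ and claim it yields the correct drift and jump, but you do not actually derive it, and this is precisely where the entire content of (H3) lies; without it the argument is a plausible outline rather than a proof. Because you integrate the It\^o correction for $x\ln x$ against $d[W,W]_{\tau}=2\,d\tau$, your entropy representation should carry coefficient $1$ rather than $\tfrac12$; as written this would throw off the constant $4$ in (H3) by a factor of two, so the normalization needs to be tracked consistently with the paper's Brownian motion convention. Your remarks about localization and the $F^{2}\mapsto F^{2}+\varepsilon$ regularization are appropriate and are the sort of thing the paper glosses over.
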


\noindent Finally, our generalized Bochner formula on parabolic path space leads to a simpler proof of the characterization of solutions of the Ricci flow found by Haslhofer and Naber \cite{HN18a}.

\begin{theorem} (Characterization of Solutions of the Ricci Flow) \cite[Theorem 1.22]{HN18a} \label{theorem Ricci}
For an evolving family of manifolds $(M^{n},g_{t})_{t \in I}$, the following are equivalent:

\begin{description}[font=\mdseries]
\item[(R1)] $(M^{n},g_{t})_{t \in I}$ solves the Ricci flow $\partial_{t}g_{t}=-2\mathrm{Rc}_{g_{t}}$.
\item[(R2)] For every $F \in L^{2}(P\mathcal{M})$, we have the gradient estimate
\begin{align}
\left |\nabla_{x} \mathbb{E}_{(x,T)}[F] \right | \leq \mathbb{E}_{(x,T)}[|\nabla_{0}^{||}F|].
\end{align}
\item[(R3)] For every $F \in L^{2}(P\mathcal{M})$, the induced martingale $\{F_{\tau}\}_{\tau \in [0,T]}$ satisfies the quadratic variation estimate
\begin{align}
\mathbb{E}_{(x,T)} \left [\frac{d[F,F]_{\tau}}{d\tau} \right ] \leq 2\mathbb{E}_{(x,T)} \left [|\nabla_{\tau}^{||}F|^{2} \right ].
\end{align}
\item[(R4)] The Ornstein-Uhlenbeck operator $\mathcal{L}_{(\tau_{1},\tau_{2})}$ on parabolic path space $L^2(P\mathcal{M})$ satisfies the log-Sobolev inequality
\begin{align}
\mathbb{E}_{(x,T)} \left [(F^{2})_{\tau_{2}} \log((F^{2})_{\tau_{2}}) - (F^{2})_{\tau_{1}} \log((F^{2})_{\tau_{1}}) \right ] \leq 2 \mathbb{E}_{(x,T)} \left [\langle F, \mathcal{L}_{(\tau_{1},\tau_{2})}F \rangle_{\mathcal{H}} \right ].
\end{align}
\item[(R5)] The Ornstein-Uhlenbeck operator $\mathcal{L}_{(\tau_{1},\tau_{2})}$ on parabolic path space $L^2(P\mathcal{M})$ satisfies the Poincar\'e inequality
\begin{align}
\mathbb{E}_{(x,T)} \left [(F_{\tau_{2}}-F_{\tau_{1}})^{2} \right ] \leq \mathbb{E}_{(x,T)} \left [\langle F, \mathcal{L}_{(\tau_{1},\tau_{2})}F \rangle_{\mathcal{H}} \right ].
\end{align}
\end{description}
\end{theorem}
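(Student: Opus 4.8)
The plan is to deduce Theorem \ref{theorem Ricci} from the already-established Theorem \ref{theorem char}, rather than re-proving everything from scratch; this is precisely the ``condensed proof'' advertised in the introduction. The cleanest route is a cycle of implications. First I would show that (R1) $\Rightarrow$ (C1)--(C5) is immediate: if $(M^n,g_t)$ solves the Ricci flow, then the $(\dot g + 2\mathrm{Rc})$-term in Theorem \ref{Bochner gen} vanishes identically, so the full Bochner inequality \eqref{Bochner gen fin} holds, and by Theorem \ref{theorem char} all of (C1)--(C5) hold, as do (G1)--(G2). Conversely, each of the Bochner-type statements in Theorem \ref{theorem char} implies (R1). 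So the strategy reduces to showing the functional-analytic estimates (R2)--(R5) are sandwiched between (R1) and one of these already-characterized conditions.

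The main steps, in order, would be: (i) \textbf{(R1) $\Rightarrow$ (R3).} Use that for a martingale $F_\tau$, the quadratic variation satisfies $d[F,F]_\tau = |\nabla_\tau^{||}F_\tau|^2\,d\tau$ plus lower-order terms coming from the stochastic parallel transport; in the Ricci flow case the drift correction in \eqref{10} is $-\mathrm{Rc}$, and a direct computation (or appeal to the one-point reduction in Corollary \ref{3.5}) gives $\mathbb{E}[d[F,F]_\tau/d\tau] = 2\mathbb{E}[|\nabla_\tau^{||}F_\tau|^2] \le 2\mathbb{E}[|\nabla_\tau^{||}F|^2]$, the last step being exactly the gradient estimate (G1) squared. (ii) \textbf{(R3) $\Rightarrow$ (R2).} Write $\mathbb{E}_{(x,T)}[F] = F_0$ and integrate the martingale from $\tau=0$ to $\tau=T$: $\mathrm{Var}(F) = \mathbb{E}\int_0^T d[F,F]_\tau$, and then localize/differentiate in $x$, using that $|\nabla_x F_0|^2$ is controlled by $\mathbb{E}[d[F,F]_\tau/d\tau]|_{\tau \to 0}$; this is the standard passage from a quadratic-variation bound to a pointwise gradient bound on the heat-type semigroup $F \mapsto F_0$. (iii) \textbf{(R2) $\Rightarrow$ (R1)}, which already follows from \eqref{gradient 2 inf} as noted in the remark after Theorem \ref{theorem grad}, or equivalently by specializing to one-point functions $F(\gamma) = f(\pi_1\gamma_{\tau_1})$ and recovering \eqref{gradient 2}, hence $\partial_t g_t \ge -2\mathrm{Rc}$ in both directions. (iv) \textbf{(R1) $\Leftrightarrow$ (R4) and (R1) $\Leftrightarrow$ (R5).} Here I would identify $\mathcal{L}_{(\tau_1,\tau_2)}$ as the Ornstein--Uhlenbeck operator whose Dirichlet form is $\mathbb{E}\int_{\tau_1}^{\tau_2}|\nabla_\sigma^{||}F|^2\,d\sigma$, so that $\langle F, \mathcal{L}_{(\tau_1,\tau_2)}F\rangle = \mathbb{E}\int_{\tau_1}^{\tau_2}|\nabla_\sigma^{||}F|^2\,d\sigma$ up to the usual integration-by-parts; then (R5) is exactly the $\sigma$-integrated form of (H2) specialized, and the Poincar\'e/log-Sobolev inequalities for $\mathcal{L}$ follow from integrating the submartingale property (C5), respectively its entropy analogue, over $\tau \in [\tau_1,\tau_2]$ and $\sigma$. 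For the reverse directions, specialize again to one-point (or two-point) cylinder functions so that the infinite-dimensional inequality degenerates to a classical log-Sobolev/Poincar\'e inequality along the heat flow on $(M,g_t)$, which is known to force $\partial_t g_t = -2\mathrm{Rc}_{g_t}$.

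For the entropy/log-Sobolev direction in step (iv) I would follow the standard Bakry--\'Emery-type argument transplanted to path space: write $\Phi(\tau) = \mathbb{E}[(F^2)_\tau \log((F^2)_\tau)]$, differentiate in $\tau$ using that $(F^2)_\tau$ is a submartingale-after-correction, and control $\Phi'(\tau)$ by the Fisher-information-type quantity $\mathbb{E}[(F^2)_\tau|\nabla_\tau^{||}\log((F^2)_\tau)|^2]$ via the pointwise inequality $\frac{d}{d\tau}\mathbb{E}[(F^2)_\tau\log(F^2)_\tau] = \mathbb{E}[d[F^2,\log F^2]_\tau/d\tau]$ and the Bochner inequality (C1) applied to $\log(F^2)_\tau$; integrating from $\tau_1$ to $\tau_2$ gives (R4). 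This is where the full strength of Theorem \ref{Bochner gen} — specifically the $2|\nabla_\tau^{||}\nabla_\sigma^{||}F_\tau|^2$ and $\delta_\sigma(\tau)$ terms — gets used, since they are what make the path-space entropy production match the Dirichlet form exactly in the Ricci flow case.

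The main obstacle I anticipate is bookkeeping the It\^o terms and the Dirac $\delta_\sigma(\tau)$ contribution correctly when passing between the differential Bochner inequalities of Theorem \ref{theorem char} and the integrated functional inequalities (R3)--(R5): one must integrate over both $\tau$ and (for (R4), (R5)) the base-point parameter $\sigma$, keep track of which boundary terms $|\nabla_\sigma^{||}F_\sigma|^2$ survive, and ensure the stochastic-integral terms $\langle \nabla_\tau^{||}(\cdot), dW_\tau\rangle$ vanish in expectation (which requires the appropriate integrability of $F \in L^2(P\mathcal{M})$ and a localization/approximation argument by bounded cylinder functions). The reverse implications ``(R$k$) $\Rightarrow$ (R1)'' are comparatively soft once one restricts to one-point functions and invokes the classical characterization \eqref{Bochner 2}--\eqref{gradient 2}, together with the observation that an inequality holding for \emph{all} test functions at \emph{all} basepoints $x$ and times $T$ forces the tensor inequality $\dot g + 2\mathrm{Rc} \le 0$, which combined with the supersolution direction $\dot g + 2\mathrm{Rc} \ge 0$ (already available from Section \ref{Background on Characterizations of Ricci Flow}) yields equality.
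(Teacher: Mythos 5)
Your overall plan — deduce Theorem \ref{theorem Ricci} from Theorems \ref{theorem char}, \ref{theorem grad}, \ref{theorem hess} by a cycle of implications, handling the converse directions via one-/two-point cylinder functions — is the right spirit, and steps (iii) and (iv) are close to what the paper does (the paper derives (R4) and (R5) from (G1) and (G2) by following the proofs of (H3) and (H2) respectively, and closes the loop via (C3)$\Rightarrow$(R1) with one- and two-point test functions, with (R4)$\Rightarrow$(R5)$\Rightarrow$(R3) shortcuts). However, there is a genuine gap in step (ii), and step (i) contains a related misstatement.

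\medskip

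\noindent\textbf{The gap in (ii), (R3)$\Rightarrow$(R2).} You propose to pass from the quadratic-variation estimate (R3) to the $L^1$-gradient estimate (R2) by ``localizing/differentiating in $x$'' and using that $|\nabla_x F_0|^2$ is controlled by $\mathbb{E}[d[F,F]_\tau/d\tau]$ as $\tau\to 0$. But this argument only yields
\begin{align}
|\nabla_x F_0|^2 = \tfrac{1}{2}\,\frac{d[F,F]_\tau}{d\tau}\Big|_{\tau=0} \leq \mathbb{E}_{(x,T)}\big[|\nabla_0^{||}F|^2\big],
\end{align}
i.e. $|\nabla_x F_0| \leq \big(\mathbb{E}_{(x,T)}[|\nabla_0^{||}F|^2]\big)^{1/2}$, which by Jensen is \emph{strictly weaker} than (R2)'s claim $|\nabla_x F_0| \leq \mathbb{E}_{(x,T)}[|\nabla_0^{||}F|]$. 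An $L^2$-type bound cannot deliver the sharp $L^1$ constant directly. The paper instead derives (R2) from the \emph{linear} Bochner inequality (C4) (equivalently, the submartingale property (C5) for $|\nabla_\sigma^{||}F_\tau|$), which lives at the first-power level rather than the squared level: integrate (C4) in $\tau$, take expectations so the martingale part drops, obtain $\mathbb{E}[|\nabla_\sigma^{||}F|] \geq \mathbb{E}[|\nabla_\sigma^{||}F_\sigma|]$, and send $\sigma\to 0$ — or, equivalently in Section 4.4, evaluate (G1) at $\sigma=\tau=0$. If you want to keep your cycle structure you would instead have to prove (R3)$\Rightarrow$(R1) first (by specializing to one- and two-point cylinder functions, exactly as in the paper's (C3)$\Rightarrow$(R1) argument) and then use (R1)$\Rightarrow$(R2); the direct (R3)$\Rightarrow$(R2) route does not close.

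\medskip

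\noindent\textbf{The misstatement in (i).} You write that $d[F,F]_\tau = |\nabla_\tau^{||}F_\tau|^2\,d\tau$ ``plus lower-order terms coming from the stochastic parallel transport; in the Ricci flow case the drift correction \ldots is $-\mathrm{Rc}$.'' This is not correct: the martingale representation theorem (Theorem \ref{3.2}) gives the exact identity $d[F,F]_\tau = 2|\nabla_\tau^{||}F_\tau|^2\,d\tau$ with no lower-order correction and \emph{independently} of whether $g_t$ solves the Ricci flow; the Ricci-flow hypothesis only enters when you bound $\mathbb{E}[|\nabla_\tau^{||}F_\tau|^2]$ by $\mathbb{E}[|\nabla_\tau^{||}F|^2]$ via (G2). (You also dropped the factor of $2$ and attributed the last inequality to ``(G1) squared'' — that Cauchy–Schwarz step is precisely (G2), which the paper states separately.) These are small fixes, but worth making, since they affect the constants in (R3) and (R5).

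\medskip

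With step (ii) repaired (proving (R3)$\Rightarrow$(R1) by test functions, or simply deriving (R2) independently from (G1) or (C4) as the paper does) and the quadratic-variation identity corrected, the remaining structure of your proposal aligns with the paper's argument.
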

\noindent Our new proof is much shorter. For example, to derive $(\mathrm{R2})$, integrate $(\mathrm{C4})$ from $0$ to $T$, and take expectations 
\begin{align}
&\mathbb{E}_{(x,T)} \left [\int_{0}^{T} d|\nabla_{\sigma}^{||}F_{\tau}|\, d\tau \right ] \overset{(\mathrm{C4})}{\geq} \mathbb{E}_{(x,T)} \left [\int_{0}^{T} \langle \nabla_{\tau}|\nabla_{\sigma}^{||}F_{\tau}|, dW_{\tau} \rangle + |\nabla_{\sigma}^{||}F_{\sigma}|d\delta_{\sigma}(\tau) \right ] \nonumber \\
&\quad \implies \mathbb{E}_{(x,T)} \left [|\nabla_{\sigma}^{||}F| \right ] - \mathbb{E}_{(x,T)} \left [|\nabla_{\sigma}^{||}F_{\sigma}| \right ] \geq 0
\end{align}
Then take limits as $\sigma \rightarrow 0$ to yield the result
\begin{align}
|\nabla_{x} \mathbb{E}_{(x,T)}[F]| = \mathbb{E}_{(x,T)} \left [|\nabla_{0}^{||}F_{0}| \right ] \leq \mathbb{E}_{(x,T)} \left [|\nabla_{0}^{||}F| \right ].
\end{align}
The article is organized as follows:
\begin{itemize}
\item In Section 2, we shall discuss the geometric and probabilistic preliminaries needed for the proofs ouf our main theorems.
\item In Section 3, we shall prove Theorem \ref{Bochner gen}, the Bochner formula for martingales on parabolic path space. 
\item In Section 4, we shall discuss the four aforementioned applications of our analysis on path space, i.e. Theorems \ref{theorem char}, \ref{theorem grad}, \ref{theorem hess} and \ref{theorem Ricci}.
\end{itemize}
\textbf{Acknowledgements.} The author has been supported by the Ontario Graduate Scholarship and he acknowledges his supervisor Robert Haslhofer for his invaluable guidance and support in bringing this paper into fruition.

\pagebreak

\section{Preliminaries}

\subsection{Geometric Preliminaries}
To begin this section, we shall recall the concepts relevant to the construction of the frame bundle on evolving manifolds. An expression of the canonical horizontal ($H_{a}$ and $D_{t}$) and vertical ($V_{ab}$) vector fields and their commutators will complete this preliminary section.
\newline \\
In time-independent geometry, given a complete Riemannian manifold $M$, one considers the orthonormal frame bundle $\pi : F \rightarrow M$, where the fibres are orthonormal maps  $F_{x} := \left \{u: \R^{n} \rightarrow T_{x}M \, \text{orthonormal} \right \}$. To each curve $x_{t} \in M$, one can associate a horizontal lift $u_{t} \in F$. In particular, to each vector $X \in T_{x}M$, given $u \in \pi^{-1}(x)$, one can associate its horizontal lift $X^{*} \in T_{u}F$.
\newline \\
We shall now explain, following  $\cite{Ham93}$ and $\cite{HN18a}$, how these notions can be adapted to the time-dependent setting.
\noindent To make the appropriate adjustment, we begin by defining space-time $\mathcal{M}$ and the equipped connection $\nabla$ as follows:
\begin{definition}\label{1} (Space-time)
Let $(M,g_{t})_{t \in I}$ be an evolving family of Riemannian manifolds. The space-time is then defined as $\mathcal{M} = M \times I$ equipped with the space-time connection defined on vector fields by $\nabla_{X}Y = \nabla_{X}^{g_{t}}Y$ and $\nabla_{t}Y = \partial_{t}Y + \frac{1}{2} \partial_{t}g_{t}(Y, \cdot)^{\#g_{t}}$.
\end{definition}
\noindent Also observe that this choice of connection is compatible with the metric, namely
\begin{align}
\frac{d}{dt} \langle X, Y \rangle _{g_{t}} = \langle \nabla_{t}X, Y \rangle _{g_{t}} + \langle X, \nabla_{t}Y \rangle _{g_{t}}.
\end{align}
Generalizing the earlier time-independent construction, we consider the $\mathcal{O}_{n}$-bundle $\pi: \mathcal{F} \rightarrow \mathcal{M}$, where the fibres are given by $\mathcal{F}_{(x,t)} := \left \{u: \R^{n} \rightarrow (T_{x}M, g_{t}) \, \text{orthonormal} \right \}$.
\newline \\
To each curve $\gamma_{t} \in \mathcal{M}$, we can now associate a horizontal lift $u_{t} \in \mathcal{F}$. Namely, given $u_{0} \in \pi^{-1}(\gamma_{0})$, the curve $u_{t}$ is the unique solution of $\pi(u_{t})=\gamma_{t}$ and $\nabla_{\dot{\gamma_{t}}}(u_{t}e_{a})=0$ for $a \in \left \{1, 2,... ,n \right \}$, where $\nabla$ is the space-time connection from Definition \ref{1}. More explicitly, we provide the following formal definition:
\begin{definition}\label{2} (Horizontal lift)
Given a vector $\alpha X + \beta \partial_{t} \in T_{(x,t)}\mathcal{M}$ and a frame $u \in \mathcal{F}_{(x,t)}$, there is a unique horizontal lift $\alpha X^{*} + \beta D_{t}$ satisfying $\pi_{*}(\alpha X^{*} + \beta D_{t}) = \alpha X + \beta \partial_{t}$. In particular, $X^{*}$ is the horizontal lift of $X \in T_{x}M$ with respect to the fixed metric $g_{t}$.
\end{definition}
\noindent Note that there are $n+1$ canonical horizontal vector fields on $\mathcal{F}$, namely the time-like horizontal vector field $D_{t}$ defined as the horizontal lift of $\partial_{t}$ and the space-like horizontal vector fields $\left \{H_{a} \right \}_{a=1}^{n}$ defined by $H_{a}(u) = (ue_{a})^{*}$.  Also note the notion of vertical vector fields given by $V_{ab}(u) = \frac{d}{d\varepsilon}|_{\varepsilon=0}(u\exp(\varepsilon A_{ab}))$ where $(A_{ab})_{cd} = (\delta_{ac}\delta_{bd}-\delta_{bc}\delta_{ad}) \in M_{n}(\R)$. We now want to express these horizontal and vertical vector fields in local coordinates as follows:
\begin{definition} (Local coordinates)
We view $\mathcal{F}$ as a sub-bundle of the $GL_{n}$-bundle $\pi: \mathcal{G} \rightarrow \mathcal{M}$ where $\mathcal{G}_{(x,t)} := \left \{ u: \R^{n} \rightarrow (T_{x}M, g_{t}) \, \text{invertible, linear} \right \}$. Then, when given local coordinates $(x^{1}, ..., x^{n},t)$ on $\mathcal{M}$, we get local coordinates $(x^{i}, t, e_{a}^{j})$ on $\mathcal{G}$, where $e_{a}^{j}$ is defined by $ue_{a} = e_{a}^{j} \frac{\partial}{\partial x^{j}}$.
\end{definition}
\noindent Also note that on $\mathcal{F}$ we have $\delta_{ab}=g(ue_{a},ue_{b})=g_{ij}e_{a}^{i}e_{b}^{j}$ and thus we can express the inverse metric as
\begin{align}
g^{ij}=e_{a}^{i}e_{a}^{j}.
\end{align}
It now remains in this section to both write out the canonical vector fields explicitly in local coordinates and derive some commutator relations between them.
\begin{lemma} [cf. \cite{Ham93}]
In local coordinates, the canonical horizontal vector fields $H_{a}$ and $D_{t}$ and canonical vertical vector fields $V_{ab}$ can be expressed as
\begin{align}
\begin{cases}
H_{a} &= e_{a}^{j} \frac{\partial}{\partial x^{j}} - e_{a}^{j} e_{b}^{k} \Gamma_{jk}^{\l} \frac{\partial}{\partial e_{b}^{\l}} \\
V_{ab} &= e_{b}^{j}\frac{\partial}{\partial e_{a}^{j}} - e_{a}^{j} \frac{\partial}{\partial e_{b}^{j}} \\
D_{t} &= \partial_{t} - \frac{1}{2} \widetilde{\partial_{t}g}_{ab} e_{b}^{\l} \frac{\partial}{\partial e_{a}^{\l}} \label{4},
\end{cases}
\end{align}
where $(\widetilde{\partial_{t}g})_{ab}(u) := (\partial_{t}g)_{\pi(u)}(ue_{a},ue_{b})$.
\end{lemma}
\begin{proof}
The canonical horizontal vector fields, $H_{a}$ are exactly the same as in \cite{Ham93}. 
\newline \\
Next, considering the curve $u(\varepsilon)=u\exp(\varepsilon A_{ab})$, recall that $e_{c}^{j}$ and $A_{ab}e_{c}$ are defined via the relations $ue_{c} = e_{c}^{j} \frac{\partial}{\partial x^{j}}$  and $A_{ab}e_{c} = \delta_{ca}e_{b} - \delta_{cb}e_{a}$. Then derive
\begin{align}
\dot{u}(0)e_{c} = \dot{e}_{c}^{j}(0) \frac{\partial}{\partial x^{j}} = uA_{ab}e_{c} = \delta_{ac}ue_{b} - \delta_{bc}ue_{a} = \left (\delta_{ac}e_{b}^{j} - \delta_{bc}e_{a}^{j} \right )\frac{\partial}{\partial x^{j}},
\end{align}
whence
\begin{align}
V_{ab} = \dot{u}(0) = \dot{e}_{c}^{j}(0) \frac{\partial}{\partial e_{c}^{j}} = \left (\delta_{ac}e_{b}^{j} - \delta_{bc}e_{a}^{j} \right )\frac{\partial}{\partial e_{c}^{j}} = e_{b}^{j} \frac{\partial}{\partial e_{a}^{j}} - e_{a}^{j} \frac{\partial}{\partial e_{b}^{j}}.
\end{align}
\newline \\
Finally we recall that $D_{t}$ is defined as the horizontal lift of $\partial_{t}$. More explicitly, given $u_{0} \in \mathcal{F}$, suppose $\pi(u_{0}) = (x_{0},t_{0})$ and $\gamma_{t} := (x_{0},t_{0}+t)$ and let $u_{t}$ be the horizontal lift of $\gamma_{t}$. Then, we have that $D_{t}(u_{0}) = \frac{d}{dt}|_{t=0} u_{t}$. Recalling Definition \ref{1},  and using the tensorial transformation rule $\widetilde{(\partial_{t}g)}_{ab} = \partial_{t}g_{jk}e_{a}^{j}e_{b}^{k}$ (see equation \eqref{14} below), we compute 
\begin{align}
\nabla_{t} \left (e_{a}^{j}\frac{\partial}{\partial x^{j}} \right ) &= \frac{d(e_{a}^{j})}{dt} \frac{\partial}{\partial x^{j}} + e_{a}^{j} \, \nabla_{t} \left (\frac{\partial}{\partial x^{j}} \right ) \nonumber \\
&=  \frac{d(e_{a}^{\l})}{dt} \frac{\partial}{\partial x^{\l}} + \frac{1}{2}e_{a}^{j}\partial_{t}g_{jk}g^{k\l}\frac{\partial}{\partial x^{\l}} \nonumber \\
&= \frac{d(e_{a}^{\l})}{dt} \frac{\partial}{\partial x^{\l}} + \frac{1}{2}e_{a}^{j}\partial_{t}g_{jk}e_{b}^{k}e_{b}^{\l}\frac{\partial}{\partial x^{\l}} \nonumber \\
&= \frac{d(e_{a}^{\l})}{dt} \frac{\partial}{\partial x^{\l}}  + \frac{1}{2} \widetilde{\partial_{t}g}_{ab}e_{b}^{\l} \frac{\partial}{\partial x^{\l}}.
\end{align}
It follows that, since $\dot{\gamma}_{t} = \partial_{t}$ and $u_{t}e_{a}=e_{a}^{\l}(t) \frac{\partial}{\partial x^{\l}}$,
\begin{align}
\nabla_{\dot{\gamma}_{t}}(u_{t}e_{a}) = \nabla_{t} \left (e_{a}^{\l}(t) \frac{\partial}{\partial x^{\l}} \right ) = \left (\frac{d}{dt}(e_{a}^{\l}(t)) + \frac{1}{2}\widetilde{\partial_{t}g}_{ab}e_{b}^{\l} \right )\frac{\partial}{\partial x^{\l}} = 0.
\end{align}
By exhibiting $D_{t}(u_{0})$ in local coordinates
\begin{align}
D_{t}(u_{0}) = 0 \cdot \frac{\partial}{\partial x^{j}} + 1 \cdot \frac{\partial}{\partial t} + \frac{d}{dt}|_{t=0}(e_{a}^{\l}(t)) \frac{\partial}{\partial e_{a}^{\l}} = \partial_{t} - \frac{1}{2}\widetilde{\partial_{t}g}_{ab}e_{b}^{\l} \frac{\partial}{\partial e_{a}^{\l}},
\end{align}
we conclude the proof.
\end{proof}
\noindent We now recall that the time-dependent tensor fields $T$ correspond to equivariant functions $\tilde{T}$ on $\mathcal{F}$. For example, a function $f: \mathcal{M} \rightarrow \R$ corresponds to the invariant function $\tilde{f} = f \circ \pi: \mathcal{F} \rightarrow \R$ and a time-dependent two-tensor $T = T_{ij}(x,t) \, dx^{i} \otimes dx^{j}$ corresponds to an equivariant function $\tilde{T}=(\tilde{T}_{ab}): \mathcal{F} \rightarrow \R^{n \times n}$ via $\tilde{T}_{ab}(u)=T_{\pi(u)}(ue_{a},ue_{b})$. Note that identities $ue_{a} = e_{a}^{j} \frac{\partial}{\partial x^{j}}$ and $ue_{b} = e_{b}^{k} \frac{\partial}{\partial x^{k}}$ yield the transformation rule
\begin{align}
\tilde{T}_{ab} = T_{ij}e_{a}^{i}e_{b}^{j} \label{14}.
\end{align}
Also observe that using equations \eqref{4} and \eqref{14}, one obtains the formula
\begin{align}
V_{ab}\tilde{T}_{cd} 
&= \tilde{T}_{bd}\delta_{c}^{a} - \tilde{T}_{ad}\delta_{c}^{b} + \tilde{T}_{cb}\delta_{d}^{a} - \tilde{T}_{ca}\delta_{d}^{b} \label{17}.
\end{align}
\begin{proposition} (Derivatives) \cite{HN18a} \label{derivatives}
From the correspondence with equivariant functions, the first and second order derivatives of tensor fields can be computed as follows
\begin{align}
\begin{cases}
&\widetilde{\nabla_{X}T} = X^{*}\tilde{T} \\
&\widetilde{\nabla_{t}T} = D_{t}\tilde{T} \\
&\widetilde{\Delta T} = \sum_{a=1}^{n} H_{a}H_{a}\tilde{T} =: \Delta_{H}\tilde{T} \\
&(\nabla^{2}f)(ue_{a},ue_{b}) = H_{a}H_{b}\tilde{f}
\end{cases}
\end{align}
\end{proposition}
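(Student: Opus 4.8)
The plan is to deduce all four identities from a single geometric fact: differentiating an equivariant function along a horizontal direction on $\mathcal{F}$ reproduces covariant differentiation of the corresponding tensor, because a horizontal lift of a curve in $\mathcal{M}$ is, by construction, the lift whose frame vectors are $\nabla$-parallel along the curve. Concretely, to prove $\widetilde{\nabla_X T} = X^{*}\tilde{T}$ I would fix $(x,t) \in \mathcal{M}$, a frame $u \in \mathcal{F}_{(x,t)}$ and $X \in T_x M$, choose a spatial curve $s \mapsto \gamma_s = (x_s, t)$ with $x_0 = x$ and $\dot{x}_0 = X$, and let $u_s$ be its horizontal lift with $u_0 = u$, so that $\frac{d}{ds}\big|_{s=0} u_s = X^{*}$ and $\nabla_{\dot{\gamma}_s}(u_s e_a) = 0$ for all $a$ by Definition~\ref{2}. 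Differentiating $\tilde{T}_{ab}(u_s) = T_{\pi(u_s)}(u_s e_a, u_s e_b)$ at $s = 0$ with the Leibniz rule gives
\[
X^{*}\tilde{T}_{ab} = (\nabla_X T)(u e_a, u e_b) + T(\nabla_X(u_s e_a), u e_b) + T(u e_a, \nabla_X(u_s e_b)) = (\nabla_X T)(u e_a, u e_b),
\]
the last two terms vanishing by the horizontal-lift property; this is the first identity, and the identical computation works for tensors of any rank, which I will use below. An alternative, and the route of \cite{Ham93}, is a direct coordinate verification: substitute $H_a$ from \eqref{3} and $\tilde{T}_{cd} = T_{ij}e_c^i e_d^j$ and match against the Christoffel-symbol formula for $\nabla T$.

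For $\widetilde{\nabla_t T} = D_t \tilde{T}$ I would run the same argument along the \emph{time-like} curve $s \mapsto \gamma_s = (x, t+s)$, whose horizontal lift $u_s$ with $u_0 = u$ satisfies $\frac{d}{ds}\big|_{s=0} u_s = D_t(u)$ and $\nabla_t(u_s e_a) = 0$; differentiating $T_{\gamma_s}(u_s e_a, u_s e_b)$ and discarding the two parallel-frame terms yields $D_t \tilde{T}_{ab} = (\nabla_t T)(u e_a, u e_b)$. The point requiring care here — and the reason the time-like horizontal lift is available at all — is the metric compatibility of $\nabla_t$ recorded after Definition~\ref{1}: it forces the horizontal lift of a time-like curve through $u \in \mathcal{F}$ to remain in the orthonormal bundle $\mathcal{F}$ rather than only in $\mathcal{G}$, so that $D_t$ is genuinely tangent to $\mathcal{F}$ and $\nabla_t(u_s e_a) = 0$ is meaningful.

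The last two identities follow by iteration. Applying the rank-$k$ version of the first identity with $X = u e_b$ to the tensor $\nabla T$ and then once more to $T$ shows that $H_b H_a \tilde{T}$ is exactly the equivariant function of $\nabla^2 T$ evaluated with $u e_b, u e_a$ in the two differentiation slots; tracing $a = b$ and using $g^{ij} = e_a^i e_a^j$ then gives $\sum_a H_a H_a \tilde{T} = \widetilde{\operatorname{tr}_g \nabla^2 T} = \widetilde{\Delta T}$, and specializing to $T = f$ (so $\nabla f = df$ and $H_b \tilde{f} = \widetilde{df}$ in the $b$-slot) gives $(\nabla^2 f)(u e_a, u e_b) = H_a H_b \tilde{f}$. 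The one genuinely subtle step in the whole proof is this iteration: one must check that when $H_b$ acts on the equivariant function of $\nabla T$ with its first index held fixed it returns the equivariant function of $\nabla_{u e_b}(\nabla T)$ with \emph{no} extra correction term. This is automatic precisely because $H_b$ is the honest horizontal lift on $\mathcal{F}$, which already encodes the connection — in contrast to the naive $\sum_a \nabla_{e_a}\nabla_{e_a}$ in a coordinate frame, which would carry a $-\nabla_{\nabla_{e_a}e_a}$ term; differentiating the equivariant function directly, there is simply no such term present. All of this is essentially the computation of \cite{Ham93} together with the additional $D_t$ statement from \cite{HN18a}, so I expect the write-up to be short.
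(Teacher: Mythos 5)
Your proposal is correct, but it takes a genuinely different route from the paper. The paper dispatches the first three identities by citing \cite{Ham93} and Lemmas 3.1, 3.3 of \cite{HN18a}, and proves only the Hessian identity $(\nabla^2 f)(ue_a,ue_b)=H_aH_b\tilde{f}$ via a direct local-coordinate computation: expand $H_a$ using \eqref{3}, apply $H_b$, and match the resulting $e_a^j e_b^k(\partial_j\partial_k\tilde f - \Gamma_{jk}^p\partial_p\tilde f)$ against $\nabla_j\nabla_k f$. You instead give a self-contained conceptual proof of all four identities at once: derive $X^*\tilde T=\widetilde{\nabla_X T}$ and $D_t\tilde T=\widetilde{\nabla_t T}$ by differentiating $\tilde T_{ab}(u_s)$ along spatial and time-like horizontal lifts and killing the frame terms via the $\nabla$-parallel condition, then obtain the Laplacian and Hessian identities by iterating the first identity and tracing with $g^{ij}=e_a^i e_a^j$. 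Both proofs are valid. Your approach is more structural and exposes why the formulas are coordinate-free consequences of the horizontal-lift construction (and correctly flags the one subtle point, that $H_b$ applied to $\widetilde{\nabla T}$ carries no correction term, which is the step that silently fails in a naive coordinate frame); the paper's approach is terser, delegating known facts and giving a concrete computation only where one is needed. Your remark about metric compatibility ensuring $D_t$ is tangent to $\mathcal{F}$ (not just $\mathcal{G}$) is a point the paper does not make explicit but which is indeed necessary for the time-like horizontal lift to make sense, and it is a welcome addition. One small caution: when you trace the second covariant derivative, be careful to state that the two slots contracted are the two differentiation slots, in the order matching $\Delta=g^{ij}\nabla_i\nabla_j$; for functions this is moot by symmetry of the Hessian, but for general tensors $\nabla^2 T$ is not symmetric in its two derivative slots, so the bookkeeping matters in the $\Delta_H\tilde T$ identity.
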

\begin{proof}
Except for the fourth identity regarding the Hessian, these are either classical results from differential geometry or have already been proven in Lemmas 3.1 and 3.3 of \cite{HN18a}. For this last identity, write the canonical horizontal vector fields in local coordinates and compute
\begin{align}
H_{a}H_{b}\tilde{f} &= \left (e_{a}^{j} \frac{\partial}{\partial x^{j}} - e_{a}^{j}e_{c}^{k}\Gamma_{jk}^{\l} \frac{\partial}{\partial e_{c}^{\l}} \right )e_{b}^{p} \frac{\partial}{\partial x^{p}}\tilde{f} \nonumber \\
&= e_{a}^{j}e_{b}^{k} \left (\frac{\partial}{\partial x^{j}} \frac{\partial}{\partial x^{k}} \tilde{f} - \Gamma_{jk}^{p} \frac{\partial}{\partial x^{p}}\tilde{f} \right ) \nonumber \\
&= e_{a}^{j}e_{b}^{k}\nabla_{j}\nabla_{k}f \nonumber \\
&= \nabla^{2}f(ue_{a},ue_{b}),
\end{align}
thereby proving the proposition.
\end{proof}
\noindent Next we proceed to prove a few commutator relations between the newly defined vector field, $D_{t}$, and the canonical horizontal and vertical vector fields.
\begin{lemma}[cf. \cite{Ham93}] \label{1.6} The fundamental vectors fields on the frame bundle satisfy the following commutator relations
\begin{align}
\begin{cases}
[H_{a}, H_{b}] &= \frac{1}{2}R_{abcd}V_{cd} \\
[V_{ab},H_{c}] &= \delta_{ac}H_{b}-\delta_{bc}H_{a} \\
[V_{ab},V_{cd}] &= \delta_{bd}V_{ac} - \delta_{ad}V_{bc} + \delta_{ac}V_{bd} - \delta_{bc}V_{ad} \\
[D_{t},H_{a}] &= -\frac{1}{2}\widetilde{\partial_{t}g}_{ad}H_{d} + \frac{1}{2}H_{b}\widetilde{\partial_{t}g}_{ac}V_{cb} \\
[D_{t},V_{ab}] &= 0.
\end{cases}
\end{align}
\end{lemma}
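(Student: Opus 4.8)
\emph{Approach.} The plan is to compute every bracket directly in the local coordinates $(x^{i},t,e_{a}^{j})$ on $\mathcal{G}$, using the explicit formulas \eqref{3}, \eqref{4}, \eqref{5} for $H_{a}$, $V_{ab}$ and $D_{t}$, by expanding $[X,Y]f = X(Yf)-Y(Xf)$ and reading off the coefficients of $\partial/\partial x^{j}$, $\partial_{t}$ and $\partial/\partial e_{a}^{j}$. The first three identities involve only $H_{a}$ and $V_{ab}$, whose coordinate expressions are, for each frozen $t$, exactly those of the orthonormal frame bundle of the single Riemannian manifold $(M,g_{t})$ with its Levi-Civita connection; hence these are the classical structure equations and follow verbatim from Hamilton's computation in \cite{Ham93} (with $R_{abcd}$ and the Kronecker symbols those of $g_{t}$, the identities holding pointwise in $t$). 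So the real content is the two brackets that see the time direction through $D_{t}=\partial_{t}-\frac{1}{2}\widetilde{\partial_{t}g}_{ab}e_{b}^{\ell}\,\partial/\partial e_{a}^{\ell}$.

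\emph{The bracket $[D_{t},V_{ab}]$.} Since $V_{ab}=e_{b}^{j}\,\partial/\partial e_{a}^{j}-e_{a}^{j}\,\partial/\partial e_{b}^{j}$ has coefficients independent of $t$, the $\partial_{t}$-part of $D_{t}$ commutes with it, and one is left to bracket the two vertical-type operators. I would substitute $V_{ab}\widetilde{\partial_{t}g}_{cd}$ from \eqref{17}, together with the elementary relations $V_{ab}(e_{d}^{\ell}) = \delta_{ad}e_{b}^{\ell}-\delta_{bd}e_{a}^{\ell}$ and $(\partial/\partial e_{c}^{\ell})(e_{d}^{m}) = \delta_{cd}\delta_{\ell}^{m}$; the resulting terms cancel in pairs and the bracket vanishes.

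\emph{The bracket $[D_{t},H_{a}]$.} With $H_{a}=e_{a}^{j}\,\partial/\partial x^{j}-e_{a}^{j}e_{b}^{k}\Gamma_{jk}^{\ell}\,\partial/\partial e_{b}^{\ell}$ there are three sources of nonzero terms: (i) $\partial_{t}$ hitting the Christoffel symbols, for which I would use
\[
\partial_{t}\Gamma_{jk}^{\ell} = \frac{1}{2}g^{\ell m}\left(\nabla_{j}(\partial_{t}g)_{km}+\nabla_{k}(\partial_{t}g)_{jm}-\nabla_{m}(\partial_{t}g)_{jk}\right);
\]
(ii) the vertical part of $D_{t}$ differentiating the factors $e_{a}^{j}e_{b}^{k}$ inside $H_{a}$, which produces terms $\widetilde{\partial_{t}g}$ times $H$-type operators; and (iii) $H_{a}$ differentiating the coefficient $\widetilde{\partial_{t}g}_{cd}e_{d}^{\ell}$ of $D_{t}$, which by Proposition \ref{derivatives} (the identity $\widetilde{\nabla_{X}T}=X^{*}\tilde{T}$) contributes the equivariant function $H_{a}\widetilde{\partial_{t}g}_{cd}$, a component of $\widetilde{\nabla\partial_{t}g}$. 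Converting the coordinate derivatives of $\Gamma$ into the equivariant functions via the transformation rule \eqref{14} and $g^{ij}=e_{a}^{i}e_{a}^{j}$, I expect the pure-$\Gamma$ terms to reassemble into $-\frac{1}{2}\widetilde{\partial_{t}g}_{ad}H_{d}$ and the first-derivative terms into $\frac{1}{2}H_{b}\widetilde{\partial_{t}g}_{ac}V_{cb}$, giving the claimed identity.

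\emph{Main obstacle.} The hard part is purely the bookkeeping in $[D_{t},H_{a}]$: keeping the many index contractions straight, matching the coordinate expression for $\partial_{t}\Gamma$ against the covariant derivative of $\partial_{t}g$, and verifying that all spurious terms — those with a bare $\partial/\partial x^{j}$, or with $\partial/\partial e$ acting twice — cancel, so that the bracket is again a combination of the canonical fields $H_{d}$ and $V_{cb}$ rather than a genuinely new field. A good consistency check is that metric compatibility, $\frac{d}{dt}\langle X,Y\rangle_{g_{t}}=\langle \nabla_{t}X,Y\rangle_{g_{t}}+\langle X,\nabla_{t}Y\rangle_{g_{t}}$, together with $[D_{t},V_{ab}]=0$, constrains the structure of the $\widetilde{\partial_{t}g}$-terms and pins down the symmetrizations appearing in the final formula.
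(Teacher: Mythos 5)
Your proposal is correct and follows essentially the same route as the paper: the first three identities are taken as classical (frozen-$t$) structure equations, $[D_t,V_{ab}]$ is computed by direct substitution using \eqref{17} and the elementary action on the $e$-coordinates, and $[D_t,H_a]$ is computed in local coordinates using the formula for $\partial_t\Gamma_{jk}^{\ell}$, the rule $g^{ij}=e_a^ie_a^j$, the transformation rule \eqref{14}, and the identification $H_a\widetilde{\partial_t g}_{bc}=(\widetilde{\nabla\partial_tg})_{abc}$. The only organizational difference is that the paper splits $D_t=\partial_t+(D_t-\partial_t)$ and computes the two brackets with $H_a$ separately before recombining (so that the $\mp\tfrac12 H_a(\widetilde{\partial_t g}_{bc})e_c^{\ell}\,\partial/\partial e_b^{\ell}$ terms cancel in plain sight), whereas you propose enumerating the same three sources of terms in a single pass; also, in the actual computation it is the vertical part of $D_t$ hitting $e_a^j$ (your source (ii)), not the $\partial_t\Gamma$ terms, that produces the $-\tfrac12\widetilde{\partial_tg}_{ad}H_d$ contribution, and the $\Gamma$-bearing remainders from sources (ii) and (iii) cancel against each other rather than contributing independently — a small misattribution in your sketch, but one that would sort itself out once the bookkeeping is carried through.
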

\begin{proof}
We admit without proof the commutator relations not involving $D_{t}$ as they can be derived from basic differential geometry. It remains to check the final two commutator relations. To prove the first of these, between $D_{t}$ and the horizontal vector field $H_{a}$, we compute
\begin{align}
[\partial_{t}, H_{a}] &= \left [\partial_{t}, e_{a}^{j}\frac{\partial}{\partial x^{j}} - e_{a}^{j}e_{b}^{k}\Gamma_{jk}^{\l}\frac{\partial}{\partial e_{b}^{\l}} \right ] \nonumber \\
&= -e_{a}^{j}e_{b}^{k} \partial_{t}\Gamma_{jk}^{\l}\frac{\partial}{\partial e_{b}^{\l}} \nonumber \\
&= -\frac{1}{2}e_{a}^{j}e_{b}^{k} (g^{\l p}(\nabla_{j}(\partial_{t}g_{kp}) + \nabla_{k}(\partial_{t}g_{jp}) - \nabla_{p}(\partial_{t}g_{jk}))\frac{\partial}{\partial e_{b}^{\l}} \nonumber \\
&= -\frac{1}{2}e_{a}^{j}e_{b}^{k}e_{c}^{\l}e_{c}^{p} \left (\nabla_{j}(\partial_{t}g_{kp}) + \nabla_{k}(\partial_{t}g_{jp}) - \nabla_{p}(\partial_{t}g_{jk}) \right ) \frac{\partial}{\partial e_{b}^{\l}} \nonumber \\
&= -\frac{1}{2}e_{c}^{\l} \left ((\widetilde{\nabla \partial_{t} g})_{abc} + (\widetilde{\nabla \partial_{t} g})_{bac} - (\widetilde{\nabla \partial_{t} g})_{cab} \right ) \frac{\partial}{\partial e_{b}^{\l}} \nonumber \\
&= -\frac{1}{2} e_{c}^{\l} (H_{a} \widetilde{\partial_{t}g}_{bc} + H_{b} \widetilde{\partial_{t}g}_{ac} - H_{c} \widetilde{\partial_{t}g}_{ab}) \frac{\partial}{\partial e_{b}^{\l}} \nonumber \\
&= -\frac{1}{2} H_{a}(\widetilde{\partial_{t}g}_{bc})e_{c}^{\l} \frac{\partial}{\partial e_{b}^{\l}} + \frac{1}{2}H_{b}(\widetilde{\partial_{t}g}_{ac}) \left (e_{b}^{\l} \frac{\partial}{\partial e_{c}^{\l}} - e_{c}^{\l} \frac{\partial}{\partial e_{b}^{\l}} \right ) \nonumber \\
&= -\frac{1}{2} H_{a}(\widetilde{\partial_{t}g}_{bc})e_{c}^{\l} \frac{\partial}{\partial e_{b}^{\l}} + \frac{1}{2}H_{b}(\widetilde{\partial_{t}g}_{ac})V_{cb} \label{31}
\end{align}
and
\begin{align}
[D_{t}-\partial_{t}, H_{a}] &= [-\frac{1}{2}(\widetilde{\partial_{t}g}_{cd})e_{d}^{\l'}\frac{\partial}{\partial e_{c}^{\l'}}, H_{a}] \nonumber \\
&= -\frac{1}{2}\widetilde{\partial_{t}g}_{cd} \left [e_{d}^{\l'}\frac{\partial}{\partial e_{c}^{\l'}}, H_{a} \right ] + \frac{1}{2}H_{a}(\widetilde{\partial_{t}g}_{cd})e_{d}^{\l'}\frac{\partial}{\partial e_{c}^{\l'}} \nonumber \\
&= -\frac{1}{2}\widetilde{\partial_{t}g}_{cd}\delta_{a}^{c}H_{d} + \frac{1}{2}H_{a}(\widetilde{\partial_{t}g}_{bc})e_{c}^{\l}\frac{\partial}{\partial e_{b}^{\l}} \nonumber \\
&= -\frac{1}{2}\widetilde{\partial_{t}g}_{ad}H_{d} + \frac{1}{2}H_{a}(\widetilde{\partial_{t}g}_{bc})e_{c}^{\l}\frac{\partial}{\partial e_{b}^{\l}} \label{35}.
\end{align}
Next, we sum equations \eqref{31} and \eqref{35} to compute the desired commutator relation
\begin{align}
[D_{t},H_{a}] &= [\partial_{t}, H_{a}] + [D_{t}-\partial_{t},H_{a}] = -\frac{1}{2}\widetilde{\partial_{t}g}_{ad}H_{d} + \frac{1}{2}H_{b}(\widetilde{\partial_{t}g}_{ac})V_{cb}.
\end{align}
Finally, using equations \eqref{4} and \eqref{17}, the commutator of $D_{t}$ and $V_{ab}$ is
\begin{align}
[D_{t},V_{ab}] &= \left [\partial_{t}-\frac{1}{2}(\widetilde{\partial_{t}g}_{cd})e_{d}^{\l'}\frac{\partial}{\partial e_{c}^{\l'}}, V_{ab} \right ] \nonumber \\
&= -\frac{1}{2} \widetilde{\partial_{t}g}_{cd}\left [e_{d}^{\l'}\frac{\partial}{\partial e_{c}^{\l'}}, V_{ab} \right ] + \frac{1}{2}V_{ab}(\widetilde{\partial_{t}g}_{cd})e_{d}^{\l'}\frac{\partial}{\partial e_{c}^{\l'}} \nonumber \\
&= \frac{1}{2} \widetilde{\partial_{t}g}_{cd} \left (e_{d}^{\l'}\frac{\partial}{\partial e_{b}^{\l'}}\delta_{c}^{a} + e_{b}^{\l'}\frac{\partial}{\partial e_{c}^{\l'}}\delta_{d}^{a} - e_{d}^{\l'}\frac{\partial}{\partial e_{a}^{\l'}}\delta_{c}^{b} - e_{a}^{\l'}\frac{\partial}{\partial e_{c}^{\l'}}\delta_{d}^{b} \right ) \nonumber \\
&\quad +\frac{1}{2}\left (\widetilde{\partial_{t}g}_{bd}\delta_{c}^{a} + \widetilde{\partial_{t}g}_{cb}\delta_{d}^{a} - \widetilde{\partial_{t}g}_{ad}\delta_{c}^{b} - \widetilde{\partial_{t}g}_{ca}\delta_{d}^{b} \right )e_{d}^{\l'}\frac{\partial}{\partial e_{c}^{\l'}} \nonumber \\
&\equiv 0,
\end{align}
thereby proving this lemma on commuting canonical vector fields.
\end{proof}
\begin{corollary} \label{1.7}
If $\tilde{f}: \mathcal{F} \rightarrow \R$ is an orthonormally invariant function, then
\begin{align}
\begin{cases}
H_{a}H_{b}\tilde{f} - H_{b}H_{a}\tilde{f} &= 0 \\
\Delta_{H}H_{a}\tilde{f} - H_{a}\Delta_{H}\tilde{f} &= \widetilde{\mathrm{Rc}}_{ab}H_{b}\tilde{f},
\end{cases}
\end{align}
where $\widetilde{\mathrm{Rc}}_{ab}(u) = \mathrm{Rc}_{\pi(u)}(ue_{a},ue_{b})$.
\end{corollary}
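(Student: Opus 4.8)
The plan is to deduce both identities formally from the commutator relations of Lemma \ref{1.6}, using the single additional input that an orthonormally invariant function $\tilde f$ is annihilated by the vertical fields, i.e.\ $V_{ab}\tilde f = 0$. This is because the $V_{ab}$ span the Lie algebra of the structure group $\mathcal{O}_n$ whose fibrewise action fixes $\tilde f$ (equivalently, in the notation of Proposition \ref{derivatives}, $\tilde f = f \circ \pi$ for some $f : \mathcal{M} \to \R$, so $\tilde f$ is invariant).

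The first identity is immediate: apply $[H_a,H_b] = \frac{1}{2}R_{abcd}V_{cd}$ to $\tilde f$, and the right-hand side $\frac{1}{2}R_{abcd}V_{cd}\tilde f$ vanishes by invariance.

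For the second identity, I would first expand, for each fixed $c$, the iterated bracket as $H_cH_cH_a - H_aH_cH_c = H_c[H_c,H_a] + [H_c,H_a]H_c$, so that summing over $c$ gives $\Delta_{H}H_a\tilde f - H_a\Delta_{H}\tilde f = \sum_c\bigl(H_c[H_c,H_a]\tilde f + [H_c,H_a]H_c\tilde f\bigr)$. Then I would substitute $[H_c,H_a] = \frac{1}{2}R_{caed}V_{ed}$ from Lemma \ref{1.6}. In the first group of terms, $H_c$ applied to $R_{caed}V_{ed}\tilde f$ splits into a piece where $H_c$ hits the curvature coefficient $R_{caed}$, which dies against $V_{ed}\tilde f = 0$, plus a piece $\frac{1}{2}R_{caed}H_cV_{ed}\tilde f$; in the latter I would commute $H_c$ past $V_{ed}$ using $[H_c,V_{ed}] = \delta_{dc}H_e - \delta_{ec}H_d$ (the negative of the relation of Lemma \ref{1.6}) and again use $V_{ed}\tilde f = 0$. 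In the second group, $[H_c,H_a]H_c\tilde f = \frac{1}{2}R_{caed}V_{ed}(H_c\tilde f)$, and here I would rewrite $V_{ed}(H_c\tilde f) = [V_{ed},H_c]\tilde f = (\delta_{ec}H_d - \delta_{dc}H_e)\tilde f$, once more because $V_{ed}\tilde f = 0$. After these reductions every remaining term is an explicit multiple of some $H_b\tilde f$, with coefficients that are one-index contractions of $R_{abcd}$.

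The step I expect to be the only real obstacle is the concluding index bookkeeping: one must track signs through the antisymmetries $R_{abcd} = -R_{bacd} = -R_{abdc}$ and the pair symmetry $R_{abcd} = R_{cdab}$ so that the several Riemann contractions that appear reinforce rather than cancel, and assemble into the single Ricci trace $\sum_c R_{cacb}H_b\tilde f = \widetilde{\mathrm{Rc}}_{ab}H_b\tilde f$ with coefficient exactly $1$. Once that contraction identity is pinned down, the corollary follows.
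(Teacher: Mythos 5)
Your plan is correct and is, in substance, exactly the "direct application of the commutator relations from Lemma \ref{1.6}" that the paper invokes without detail: the single extra input $V_{ab}\tilde f = 0$ for orthonormally invariant $\tilde f$ is the right observation, the first identity drops out immediately, and the second reduces to a Ricci contraction via the brackets.

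One place where your description is slightly muddled and could trip you up in the write-up is the first group $\sum_c H_c[H_c,H_a]\tilde f$. You propose to split $H_c\bigl(R_{caed}V_{ed}\tilde f\bigr)$ into $(H_cR_{caed})V_{ed}\tilde f$ plus $R_{caed}H_cV_{ed}\tilde f$, then commute $H_c$ past $V_{ed}$ in the second piece and ``again use $V_{ed}\tilde f=0$.'' But after commuting you have $R_{caed}\bigl(V_{ed}H_c\tilde f + [H_c,V_{ed}]\tilde f\bigr)$, and the term $V_{ed}H_c\tilde f$ is \emph{not} killed by $V_{ed}\tilde f=0$, since $H_c\tilde f$ is not orthonormally invariant; if you discard it you get a spurious contribution from the first group that exactly cancels the second group, producing $0$ instead of the Ricci trace. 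The clean resolution is that the entire first group is identically zero without any commuting: since $V_{ed}\tilde f\equiv 0$ as a function on $\mathcal F$, one has $H_c[H_c,H_a]\tilde f = H_c\bigl(\tfrac{1}{2}R_{caed}\cdot 0\bigr) = 0$. (Equivalently, if you insist on commuting, use $H_cV_{ed}\tilde f=0$ to conclude $V_{ed}H_c\tilde f = [V_{ed},H_c]\tilde f = -[H_c,V_{ed}]\tilde f$, so the two terms cancel.) With the first group dead, only the second group $\sum_c[H_c,H_a]H_c\tilde f = \tfrac{1}{2}\sum_c R_{caed}[V_{ed},H_c]\tilde f$ survives, and your index bookkeeping, using $R_{cadc}=-R_{cacd}$, correctly assembles it into $\sum_{c}R_{cacb}H_b\tilde f = \widetilde{\mathrm{Rc}}_{ab}H_b\tilde f$. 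So the proof goes through once that one point is made precise.
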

\begin{proof}
This is a direct application of the commutator relations from Lemma \ref{1.6}.
\end{proof}
\begin{proposition} \label{1.8}
Let $\tilde{f}: \mathcal{F} \rightarrow \R$ be an orthonormally invariant function. Then
\begin{align}
[D_{t}-\Delta_{H},H_{a}]\tilde{f} = -\frac{1}{2}(\widetilde{\partial_{t}g}+2\widetilde{\mathrm{Rc}})_{ab}H_{b}\tilde{f}.
\end{align}
\end{proposition}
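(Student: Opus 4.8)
The plan is to reduce Proposition \ref{1.8} to the two commutator computations already in hand, namely the formula for $[D_t,H_a]$ from Lemma \ref{1.6} and the curvature commutator for $\Delta_H$ from Corollary \ref{1.7}, exploiting only one extra elementary fact: that vertical vector fields annihilate orthonormally invariant functions.

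First I would split the operator as $[D_t-\Delta_H,H_a] = [D_t,H_a] - [\Delta_H,H_a]$ and evaluate each piece on $\tilde f$. For the first piece, Lemma \ref{1.6} gives
\begin{align}
[D_t,H_a]\tilde f = -\tfrac12 \widetilde{\partial_t g}_{ad} H_d\tilde f + \tfrac12 H_b\!\left(\widetilde{\partial_t g}_{ac}\right) V_{cb}\tilde f .
\end{align}
Here I would observe that since $\exp(\varepsilon A_{cb}) \in \mathcal{O}_n$ (because $A_{cb}$ is skew-symmetric) and $\tilde f$ is orthonormally invariant, we have $V_{cb}\tilde f = \tfrac{d}{d\varepsilon}\big|_{0}\tilde f(u\exp(\varepsilon A_{cb})) = 0$, so the second term drops and $[D_t,H_a]\tilde f = -\tfrac12\widetilde{\partial_t g}_{ad}H_d\tilde f$.

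For the second piece, Corollary \ref{1.7} states $\Delta_H H_a\tilde f - H_a\Delta_H\tilde f = \widetilde{\mathrm{Rc}}_{ab}H_b\tilde f$, i.e. $[\Delta_H,H_a]\tilde f = \widetilde{\mathrm{Rc}}_{ab}H_b\tilde f$. Subtracting and relabelling the dummy summation index $d\to b$ yields
\begin{align}
[D_t-\Delta_H,H_a]\tilde f = -\tfrac12\widetilde{\partial_t g}_{ab}H_b\tilde f - \widetilde{\mathrm{Rc}}_{ab}H_b\tilde f = -\tfrac12\left(\widetilde{\partial_t g}+2\widetilde{\mathrm{Rc}}\right)_{ab}H_b\tilde f,
\end{align}
which is the claim.

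There is no real obstacle here; the proof is a two-line bookkeeping argument. The only points requiring care are the sign conventions (ensuring the curvature term from Corollary \ref{1.7} enters with the correct sign after the subtraction) and the vanishing of the vertical term, which is where orthonormal invariance of $\tilde f$ is essential — without it, Proposition \ref{1.8} would carry the extra $\tfrac12 H_b(\widetilde{\partial_t g}_{ac})V_{cb}$ contribution.
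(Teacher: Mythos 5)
Your proof is correct and follows exactly the paper's argument: split the commutator, apply Lemma \ref{1.6} and Corollary \ref{1.7}, and drop the vertical term using orthonormal invariance of $\tilde f$. The only difference is that you spell out explicitly why $V_{cb}\tilde f = 0$, a step the paper leaves implicit.
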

\begin{proof}
It readily follows from Lemma \ref{1.6} and Corollary \ref{1.7} that
\begin{align}
[D_{t}-\Delta_{H},H_{a}]\tilde{f} &= [D_{t},H_{a}]\tilde{f} - [\Delta_{H},H_{a}]\tilde{f} \nonumber \\
&= -\frac{1}{2}\widetilde{\partial_{t}g}_{ad}H_{d}\tilde{f} + \frac{1}{2}H_{b}(\widetilde{\partial_{t}g}_{ac})V_{cb}\tilde{f} - \widetilde{\mathrm{Rc}}_{ab}H_{b}\tilde{f} \nonumber \\
&= -\frac{1}{2}(\widetilde{\partial_{t}g}+2\widetilde{\mathrm{Rc}})_{ab}H_{b}\tilde{f},
\end{align}
thereby proving the proposition.
\end{proof}

\pagebreak

\subsection{Probabilistic Preliminaries}

The principal goal of this section is to recall the notions of Brownian motion and stochastic parallel transport in the setting of evolving manifolds as developed in \cite{ACT08} and \cite{HN18a}.
\newline \\
We first remark that it shall hereafter be assumed that in addition to the Riemannian manifolds $\left \{M_{t} \right \}$ being complete as in the previous section, they will also satisfy
\begin{align}
\sup_{\mathcal{M}} \left (|\mathrm{Rm}| + |\partial_{t}g|  + |\nabla \partial_{t}g| \right ) < \infty.
\end{align}
Horizontal curves $\left \{u_{\tau} \right \}_{\tau \in [0,T]} \in \mathcal{F}$, where $\pi(u_{\tau}) = (x_{\tau},T-\tau)$, correspond to curves $\left \{w_{\tau} \right \}_{\tau \in [0,T]} \in \R^{n}$ (also known as the anti-development of $u_{\tau}$) via the following initial value problem
\begin{align}
\begin{cases} \frac{du_{\tau}}{d\tau} &= D_{\tau} + H_{a}(u_{\tau}) \frac{dw_{\tau}^{a}}{d\tau} \\
w_{0} &= 0. 
\end{cases}
\end{align}
\noindent This definition of the anti-development in the time-dependent geometry setting appropriately motivates the following stochastic differential equation in the case of evolving manifolds
\begin{align}
\begin{cases} dU_{\tau} &= D_{\tau}\, d\tau + H_{a}(U_{\tau}) \circ dW_{\tau}^{a} \label{49} \\
U_{0} &= u.
\end{cases}
\end{align}
We make a short note on notation that $W_{\tau} \sim \sqrt{2}B_{\tau}$ refers to the Brownian motion in $\R^{n}$ with rescaling by a factor of $\sqrt{2}$ such that it has quadratic variation $d[W,W]_{\tau} = 2d[B, B]_{\tau} = 2\, d\tau$ and $\circ$ refers to the Stratonovich integral in differential notation.
\newline \\
Next, by demonstrating that this equation satisfies existence and uniqueness criterion as well as It\^o's lemma, the notions of Brownian motion, via projection onto $\mathcal{M}$, and stochastic parallel transport can be formalized.
\begin{proposition}
(Existence, uniqueness and It\^o's lemma) \label{50} \label{1.9}
The stochastic differential equation \eqref{49} has a unique solution $\left\{U_{\tau} \right \}_{\tau \in [0,T]}$ that satisfies $\pi_{2}(U_{\tau}) = T-\tau$. Moreover, given $\tilde{f}: \mathcal{F} \rightarrow \R$ is of class $C^2$, then the solution $U_{\tau}$ satisfies
\begin{align}
d\tilde{f}(U_{\tau}) = D_{\tau} \tilde{f}(U_{\tau})\, d\tau + \langle (H\tilde{f})(U_{\tau}), dW_{\tau} \rangle + \Delta_{H}(\tilde{f})(U_{\tau})\, d\tau.
\end{align}
\end{proposition}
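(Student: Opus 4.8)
The plan is to read \eqref{49} as a Stratonovich SDE on the total space of the $GL_n$-bundle $\mathcal{G}\to\mathcal{M}$, which in local coordinates $(x^i,t,e^j_a)$ has smooth coefficients by the explicit formulas \eqref{3} and \eqref{5} for $H_a$ and $D_\tau$, so that standard SDE theory produces a unique maximal solution up to an explosion time. The behaviour of the time component is then immediate: $H_a$ is the horizontal lift of the spatial vector $ue_a\in T_xM$ and hence has vanishing $\partial_t$-component, while $D_\tau$ contributes a deterministic unit rate in the time direction (with sign dictated by the backward-time convention of parabolic path space). Thus the $t$-coordinate of $U_\tau$ carries no martingale part and satisfies $d(\pi_2 U_\tau)=-\,d\tau$ with $\pi_2(U_0)=T$, giving $\pi_2(U_\tau)=T-\tau$. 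Moreover, since $H_a$ and $D_\tau$ are tangent to $\mathcal{F}\subset\mathcal{G}$ — the latter precisely because $\nabla_t$ is metric compatible, so the flow along $D_\tau$ sends $g_{t_0}$-orthonormal frames to $g_{t_1}$-orthonormal frames — a solution started at $u\in\mathcal{F}$ remains in $\mathcal{F}$.

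It then remains to rule out explosion on $[0,T]$, and I would argue in two steps. First, the $M$-component $X_\tau$ of $\pi(U_\tau)$ solves the SDE of a time-inhomogeneous Brownian motion on $M$; since $(M,g_t)$ is complete with $\sup_{\mathcal{M}}(|\mathrm{Rm}|+|\partial_tg|+|\nabla\partial_tg|)<\infty$, this diffusion is non-explosive (either by the usual cutoff/comparison argument applied to the radial process, or by invoking \cite{ACT08}). Second, conditionally on the base path $X_\tau$, the frame component solves a \emph{linear} equation — stochastic parallel transport along $X_\tau$, assembled from the $H_a$-connection terms and the $D_\tau$-correction $-\tfrac12\widetilde{\partial_tg}$ — whose coefficients are bounded along $X_\tau$ by the same hypotheses, and a linear SDE driven by a non-explosive input cannot blow up in finite time. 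Combining the two gives global existence and uniqueness on $[0,T]$, as in \cite{ACT08} and \cite{HN18a}.

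Finally, for It\^o's lemma I would convert the Stratonovich equation to It\^o form. For $\tilde f\in C^2(\mathcal{F})$ the Stratonovich chain rule gives
\[
d\tilde f(U_\tau)=D_\tau\tilde f(U_\tau)\,d\tau+H_a\tilde f(U_\tau)\circ dW^a_\tau=D_\tau\tilde f(U_\tau)\,d\tau+H_a\tilde f(U_\tau)\,dW^a_\tau+\tfrac12\,d\big[H_a\tilde f(U_\cdot),W^a\big]_\tau .
\]
The martingale part of $d\big(H_a\tilde f(U_\tau)\big)$ is $H_bH_a\tilde f(U_\tau)\,dW^b_\tau$, and since $W\sim\sqrt2 B$ has $d[W^b,W^a]_\tau=2\delta_{ab}\,d\tau$, the correction term equals $\sum_a H_aH_a\tilde f(U_\tau)\,d\tau=\Delta_H\tilde f(U_\tau)\,d\tau$. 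Collecting terms yields exactly $d\tilde f(U_\tau)=D_\tau\tilde f(U_\tau)\,d\tau+\langle(H\tilde f)(U_\tau),dW_\tau\rangle+\Delta_H\tilde f(U_\tau)\,d\tau$. The only steps that demand genuine care are the non-explosion estimate and tracking the factor $2$ in the Stratonovich correction coming from the $\sqrt2$-normalization of $W$; everything else is coordinate bookkeeping.
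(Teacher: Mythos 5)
Your proof is correct, but it does considerably more than the paper does: the paper's entire ``proof'' of this proposition is a single citation to Proposition~3.7 of \cite{HN18b}, so any honest write-out was always going to diverge. Your argument supplies the details along standard lines: local existence and uniqueness from smoothness of $H_a$ and $D_\tau$ in the coordinates $(x^i,t,e^j_a)$ on $\mathcal{G}$; invariance of $\mathcal{F}\subset\mathcal{G}$ because both vector fields are tangent to $\mathcal{F}$ (for $D_\tau$ this is exactly the metric-compatibility $\tfrac{d}{dt}\langle X,Y\rangle_{g_t}=\langle\nabla_t X,Y\rangle_{g_t}+\langle X,\nabla_t Y\rangle_{g_t}$ recorded in the paper); non-explosion by splitting into the base diffusion on the complete manifold with bounded geometry, and a linear frame equation over a fixed non-explosive base path; and the Stratonovich-to-It\^o conversion, where you correctly track the factor of $2$ from $d[W^a,W^b]_\tau=2\delta_{ab}\,d\tau$ so that the correction $\tfrac12\,d[H_a\tilde f(U_\cdot),W^a]_\tau$ produces exactly $\Delta_H\tilde f\,d\tau$. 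One small point worth tightening: you parenthetically defer the sign of the time component of $D_\tau$ to ``the backward-time convention of parabolic path space,'' which is reasonable given that the paper's own notation is slightly ambiguous here (Lemma~2.4 gives $D_t$ a $+\partial_t$-component, yet the SDE is asserted to produce $\pi_2(U_\tau)=T-\tau$), but a fully self-contained proof should state explicitly that $D_\tau$ in \eqref{49} denotes the horizontal lift of $\partial_\tau=-\partial_t$, i.e.\ $-D_t$ in the notation of Lemma~2.4, so that $d(\pi_2 U_\tau)=-\,d\tau$ is immediate and the martingale-free time coordinate argument closes.
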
 

\begin{proof}
This result has already been proven in Proposition 3.7 of \cite{HN18b}.
\end{proof}
\noindent We shall now continue with defining the notions of Brownian motion and stochastic parallel transport, from \cite{HN18a}, in the setting of time-evolving families of Riemannian manifolds.
\begin{definition} (Brownian motion on space time)
We call $\pi(U_{\tau}) = (X_{\tau},T-\tau)$ the Brownian motion on space time $\mathcal{M} = M \times I$ with base point $\pi(u)=(x,T)$.
\end{definition}
\begin{definition} (Stochastic parallel transport)
The family of isometries
\begin{align}
\left \{P_{\tau} = U_{0}U_{\tau}^{-1} : (T_{X_{\tau}}M,g_{T-\tau}) \rightarrow (T_{x}M,g_{T}) \right \}
\end{align}
is called the stochastic parallel transport along the Brownian curve $X_{\tau}$.
\end{definition}
\noindent This Brownian motion now inherits a path based space, diffusion measure and filtration. First, we denote by $P_{0}\R^{n}$ the based path space on $\R^{n}$, namely the space of continuous curves $\left \{w_{\tau} | w_{0}=0 \right \}_{\tau \in [0,T]} \subset \R^{n}$.
\begin{definition} (Based path spaces)
Let $P_{u}\mathcal{F}$ and $P_{(x,T)}\mathcal{M}$ be the spaces of continuous curves, $\left \{u_{\tau} | u_{0}=u, \pi_{2}(u_{\tau})= T-\tau \right \}_{\tau \in [0,T]} \subset \mathcal{F}$ and $\left \{\gamma_{\tau} = (x_{\tau},T-\tau) | \gamma_{0}=(x,T) \right \}_{\tau \in [0,T]}$ respectively. 
\end{definition}
\noindent To construct the Wiener measure, we first observe that solving the stochastic differential equation \eqref{49} yields a map $U: P_{0}\R^{n} \rightarrow P_{u}\mathcal{F}$. Moreover, the projection map $\pi: \mathcal{F} \rightarrow \mathcal{M}$ induces a map $\Pi: P_{u}\mathcal{F} \rightarrow P_{(x,T)}\mathcal{M}$.
\begin{definition} (Wiener measure)
Let $\mathbb{P}_{0}$ be the Wiener measure on path space $P_{0}\R^{n}$. We then say that $\mathbb{P}_{u} := U_{*}(\mathbb{P}_{0})$ and $\mathbb{P}_{(x,T)} := \Pi_{*}\mathbb{P}_{u}$ are the Wiener measures of horizontal Brownian motion on $\mathcal{F}$ and Brownian motion on space-time $\mathcal{M}$ respectively.
\end{definition}
\noindent Moreover, we can uniquely characterize the Wiener measure in terms of the heat kernel.
\begin{proposition} \cite{HN18a} \label{2.14}
Let $\left \{\tau_{j} \right \}_{j=1}^{k}$ be a partition of $[0,T]$, $U_{j} \subseteq M$ and $\gamma_{0}=(x,T)$. Then 
\begin{align}
\mathbb{P}_{(x,T)} \left [X_{\tau_{j}} \in U_{j}, \, \forall j \in {1,...,k} \right] = \int_{\times_{1}^{k} U_{j}} \Pi_{j} H(x_{j-1}, T-\tau_{j-1} | x_{j}, T-\tau_{j}) \otimes d\mathrm{Vol}_{g_{T-\tau_{j}}}(y_{j})
\end{align}
uniquely characterizes the Wiener measure on $P_{(x,T)}\mathcal{M}$.
\end{proposition}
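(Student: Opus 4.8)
The plan is to identify the finite-dimensional distributions of the space-time Brownian motion $X_\tau$ by combining It\^o's lemma with the Markov property, and to settle the uniqueness claim by a monotone-class argument.

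\emph{Step 1: the one-point formula.} I would first show that for every $f \in C_c^\infty(M)$ and $0 \le \tau \le T$,
\[
\mathbb{E}_{(x,T)}[f(X_\tau)] = \int_M H(x,T \mid y, T-\tau)\, f(y)\, dV_{g_{T-\tau}}(y) = (H_{T-\tau,T}f)(x).
\]
Fix $\tau$, let $u(\cdot,t) := H_{T-\tau,t}f$ solve the forward heat equation $\partial_t u = \Delta_{g_t}u$ on $M \times [T-\tau,T]$ with $u(\cdot,T-\tau)=f$, and let $\tilde u = u\circ\pi$ be the induced invariant function on $\mathcal F$. Applying It\^o's lemma (Proposition \ref{1.9}) to $s \mapsto \tilde u(U_s)$ on $s \in [0,\tau]$ and using Proposition \ref{derivatives} to identify the $D$- and $\Delta_H$-terms, the bounded-variation part of $d\tilde u(U_s)$ equals $(-\partial_t u + \Delta_{g_t}u)|_{t=T-s}$ evaluated along the path — the minus sign in front of $\partial_t u$ arising because the SDE parameter $s$ and the geometric time $t=T-s$ run in opposite directions — and this vanishes by the choice of $u$. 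Hence $M_s := u(X_s, T-s)$ is a local martingale with $M_0 = (H_{T-\tau,T}f)(x)$ and $M_\tau = f(X_\tau)$; the standing bounds $\sup_{\mathcal M}(|\mathrm{Rm}| + |\partial_t g| + |\nabla\partial_t g|) < \infty$ furnish the heat-kernel and gradient estimates that promote $M_s$ to a true martingale, so $\mathbb{E}[M_\tau] = M_0$, which is the claim. Approximating $\mathbbm{1}_U$ for open $U \subseteq M$ monotonically from below by such $f$ then yields the same identity with $f = \mathbbm{1}_U$.

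\emph{Step 2: Markov property and iteration.} Uniqueness of solutions to \eqref{49} (Proposition \ref{1.9}) gives the flow property for $\{U_\tau\}$, so $X_\tau$ is a time-inhomogeneous Markov process for the filtration $\{\Sigma_\tau\}$ whose transition over parameter-time $\tau_j - \tau_{j-1}$, started at $(z,T-\tau_{j-1})$, is by Step 1 integration against $H(z, T-\tau_{j-1}\mid\cdot, T-\tau_j)\, dV_{g_{T-\tau_j}}$. Writing the joint probability as $\mathbb{E}_{(x,T)}\big[\prod_{j=1}^k \mathbbm{1}_{U_j}(X_{\tau_j})\big]$ and conditioning successively on $\Sigma_{\tau_{k-1}}, \dots, \Sigma_{\tau_1}$ via the tower property, the Markov property replaces the innermost indicator at each stage by the corresponding heat-kernel integral, producing precisely the asserted iterated integral (with $x_0 = x$, $\tau_0 = 0$).

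\emph{Step 3 and the main obstacle.} For uniqueness, the cylinder sets $\{\gamma : \gamma_{\tau_j} \in U_j \ \forall j\}$ over all partitions and open $U_j$ form a $\pi$-system generating the Borel $\sigma$-algebra of $P_{(x,T)}\mathcal M$, so Dynkin's $\pi$-$\lambda$ theorem forces any two Borel probability measures agreeing on it to coincide. The real work is Step 1 on the noncompact background: one must have the heat kernel $H$ with enough regularity and decay, and must exclude explosion of the Brownian motion so that the local martingale $M_s$ is genuinely a martingale — both guaranteed by the uniform geometry bounds. The only additional subtlety is the sign bookkeeping relating the backward parameter $\tau$ to the forward propagator $H_{s,t}$.
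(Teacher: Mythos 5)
Your proof is correct and follows essentially the same route as Proposition 3.31 of Haslhofer--Naber \cite{HN18a}, which is exactly what the paper defers to without supplying details. Your sign bookkeeping for the backward-in-time parametrization $t = T-\tau$ (so that the It\^o drift becomes $(-\partial_t + \Delta_{g_t})u$, vanishing for heat solutions) is the right subtlety to flag, and the subsequent Markov-property iteration plus the Dynkin $\pi$-$\lambda$ argument for uniqueness is the standard closure the citation points to.
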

\begin{proof}
The proof follows as in Proposition 3.31 of \cite{HN18a}.
\end{proof}
\noindent Next, we recall that the path space $P_{0}\R^{n}$ comes equipped with an intrinsic filtration $\Sigma_{\tau}^{\R^{n}}$ generated by evaluation maps $\left \{e_{\sigma}: P_{0}\R^{n} \rightarrow \R^{n} | e_{\sigma}(w) = w_{\sigma}\, , \sigma \leq \tau \right \}$.
\begin{definition} (Filtrations on $P_{u}\mathcal{F}$ and $P_{(x,T)}\mathcal{M}$)
The filtrations on $P_{u}\mathcal{F}$ and $P_{(x,T)}\mathcal{M}$ are simply the respective push-forwards $\Sigma_{\tau}^{\mathcal{M}} := (\Pi \circ U)_{*} \Sigma_{\tau}^{\R^{n}}$ and $\Sigma_{\tau}^{\mathcal{F}} := U_{*} \Sigma_{\tau}^{\R^{n}}$.
\end{definition}
\noindent A short reiteration of induced martingales as well as parallel and Malliavin gradients are constructed in the time-dependent setting will now complete this section.
\begin{definition} (Induced martingale)
Let $F: P_{(x,T)}\mathcal{M} \rightarrow \R$ be integrable. Then, we define the induced martingale as $F_{\tau}(\gamma) := \mathbb{E}[F | \Sigma_{\tau}](\gamma)$.
\end{definition}
\noindent Using this definition, the conditional expectation can now be characterized by a representation formula.
\begin{proposition} (Conditional expectation) \cite{HN18a}  \label{1.17}
Suppose the conditional expectation is as defined. Then, for almost every Brownian curve $\left \{\gamma_{\tau} \right \}_{\tau \in [0,T]}$,
\begin{align}
F_{\tau}(\gamma) := \mathbb{E}[F | \Sigma_{\tau}](\gamma) = \int_{P_{\gamma_{\tau}}\mathcal{M}} F(\gamma |_{[0, \tau]} * \gamma') d\mathbb{P}_{\gamma_{\tau}}(\gamma'),
\end{align}
where we integrate over all Brownian curves $\gamma'$ in the based path space $P_{\gamma_{\tau}}\mathcal{M}$ with respect to Wiener measure $\mathbb{P}_{\gamma_{\tau}}$ and $*$ denotes concatenation of the two curves $\gamma |_{[0,\tau]}$ and $\gamma'$.
\end{proposition}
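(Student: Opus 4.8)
The plan is to reduce the claimed identity to the Chapman--Kolmogorov (semigroup) property of the heat kernel via a monotone class argument — this is just the Markov property of space-time Brownian motion, written in terms of paths.

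First I would note that both sides of the asserted identity are $\Sigma_{\tau}^{\mathcal{M}}$-measurable: for the right-hand side this uses that the base point map $(y,s)\mapsto\mathbb{P}_{(y,s)}$ is measurable, which follows from continuity of the heat kernel $H$ together with the heat-kernel characterization in Proposition \ref{2.14}, plus joint measurability of the concatenation $(\gamma,\gamma')\mapsto\gamma|_{[0,\tau]}*\gamma'$. By the defining property of conditional expectation it then suffices to prove
\begin{align}
\mathbb{E}_{(x,T)}[G\cdot F]=\mathbb{E}_{(x,T)}\Big[G\cdot\! \int_{P_{\gamma_{\tau}}\mathcal{M}}\! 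F(\gamma|_{[0,\tau]}*\gamma')\,d\mathbb{P}_{\gamma_{\tau}}(\gamma')\Big]
\end{align}
for every bounded $\Sigma_{\tau}^{\mathcal{M}}$-measurable $G$. Since the filtration is the push-forward of the evaluation filtration on $P_{0}\R^{n}$, it is generated by the position maps $\gamma\mapsto x_{\sigma}$, $\sigma\le\tau$, so by a functional monotone class argument it is enough to check this when $F(\gamma)=\prod_{j=1}^{k}f_{j}(x_{\tau_{j}})$ and $G(\gamma)=\prod_{i=1}^{l}h_{i}(x_{\sigma_{i}})$ are bounded continuous cylinder functions with $\sigma_{i}\le\tau$, and then pass to general $F$ by $L^{1}$-density (both sides being $L^{1}$-contractions in $F$).

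For such cylinder functions the verification is a direct computation. After inserting $\tau$ and the $\sigma_{i}$ into the partition if necessary, say $\tau=\tau_{m}$, the concatenated path satisfies $(\gamma|_{[0,\tau]}*\gamma')_{\tau_{j}}=x_{\tau_{j}}$ for $j\le m$ and $=x'_{\tau_{j}-\tau}$ for $j>m$, so the inner integral factors as $\big(\prod_{j\le m}f_{j}(x_{\tau_{j}})\big)$ times an integral of $\prod_{j>m}f_{j}$ against a product of heat kernels based at $x_{\tau}$, by Proposition \ref{2.14} applied to $\mathbb{P}_{\gamma_{\tau}}=\mathbb{P}_{(x_{\tau},T-\tau)}$. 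Multiplying by $G$, taking $\mathbb{E}_{(x,T)}$, and evaluating both sides with Proposition \ref{2.14}, one obtains the same iterated integral of heat kernels over all partition times on each side; they agree term by term, the identity $\int_{M}H(z,r|y,s)H(y,s|w,u)\,d\mathrm{Vol}_{g_{s}}(y)=H(z,r|w,u)$ absorbing any partition points that were artificially inserted. This closes the argument.

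The main obstacle is not the heat-kernel bookkeeping but the measure-theoretic plumbing behind the reduction: density of cylinder functions in $L^{1}(P_{(x,T)}\mathcal{M},\mathbb{P}_{(x,T)})$ and the fact that they generate $\Sigma_{T}^{\mathcal{M}}$, continuity/measurability of $(y,s)\mapsto\mathbb{P}_{(y,s)}$, and joint measurability of concatenation — all needed to make the right-hand side a genuine $\Sigma_{\tau}^{\mathcal{M}}$-measurable random variable rather than a merely pointwise expression, and to justify Fubini. These are precisely the technical points carried out in \cite{HN18a}. Alternatively, one can sidestep concatenation-measurability and part of the density argument by deducing the Markov property of the space-time Brownian motion $\{X_{\tau}\}$ directly from pathwise uniqueness for the SDE \eqref{49} (Proposition \ref{1.9}): conditionally on $\Sigma_{\tau}^{\mathcal{M}}$ the shifted solution $\{U_{\tau+s}\}_{s\ge0}$ solves the same equation started from $U_{\tau}$ and driven by the increment $W_{\tau+\cdot}-W_{\tau}$, after which the representation formula is the standard freezing-the-past identity.
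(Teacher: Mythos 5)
Your argument is correct and is, in substance, the argument the paper is pointing at. The paper itself gives no proof here: it simply cites Proposition 3.19 of \cite{HN18a}. What you have written out — reduction to cylinder functions by a functional monotone class argument, verification on cylinder functions by the Chapman--Kolmogorov identity for the time-dependent heat kernel, and the observation that this is really the Markov property of the space-time Brownian motion, with the measurability of $(y,s)\mapsto\mathbb{P}_{(y,s)}$, the joint measurability of concatenation, and $L^{1}$-density of cylinder functions as the actual technical content — is precisely the content of that cited proposition. Your alternative route, deducing the Markov property from pathwise uniqueness of the horizontal SDE \eqref{49} and then reading off the representation formula as a freezing-the-past identity, is also sound and arguably cleaner, since it avoids having to check measurability of concatenation by hand and instead leans on the strong Markov structure already present in the SDE framework of Proposition \ref{1.9}. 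Either route closes the gap; since the paper only defers to \cite{HN18a}, you have in effect supplied the omitted proof rather than found a genuinely different one.
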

\begin{proof}
The proof follows as in Proposition 3.19 of \cite{HN18a}.
\end{proof}
\noindent To define the two notions of gradients, we first recall that cylinder functions are of the form $u \circ e_{\bm{\sigma}}$, where $e_{\bm{\sigma}} : P_{(x,T)}\mathcal{M} \rightarrow M^{k}$ are $k$-point evaluation maps, namely $e_{\bm{\sigma}} : \gamma \mapsto (\pi_{1} \gamma_{\sigma_{1}},...,\pi_{1} \gamma_{\sigma_{k}})$, and $u: M^{k} \rightarrow \R$ is compactly supported.
\begin{example} \label{example 2.18}
Let $F(\gamma) := f \circ e_{\tau_{1}}(\gamma) = f(\pi_{1} \gamma_{\tau_{1}})$. Then the induced martingale of $F$ is given for $\tau > \tau_{1}$ by 
\begin{align}
F_{\tau}(\gamma) &= \int_{P_{\gamma_{\tau}}\mathcal{M}} F(\gamma|_{[0,\tau]}*\gamma')\, d\mathbb{P}_{\gamma_{\tau}}(\gamma') \nonumber \\
&= \int_{P_{\gamma_{\tau}}\mathcal{M}} f(\pi_{1}\gamma_{\tau_{1}})\, d\mathbb{P}_{\gamma_{\tau}}(\gamma')  \nonumber \\
&= f(X_{\tau_{1}}), 
\end{align}
and for $\tau < \tau_{1}$ by
\begin{align}
F_{\tau}(\gamma) &= \int_{P_{\gamma_{\tau}}\mathcal{M}} F(\gamma|_{[0,\tau]}*\gamma')\, d\mathbb{P}_{\gamma_{\tau}}(\gamma') \nonumber \\ 
&= \int_{P_{\gamma_{\tau}}\mathcal{M}} f(\pi_{1}\gamma'_{\tau_{1}-\tau})\, d\mathbb{P}_{\gamma_{\tau}}(\gamma') \nonumber \\
&= \int_{M} f(y)H(X_{\tau},T-\tau| y,T-\tau_{1})\, dV_{g_{T-\tau_{1}}}(y) \nonumber \\
&= H_{T-\tau_{1},T-\tau}f(\pi_{1} \gamma_{\tau}).
\end{align}
\end{example}
\begin{definition} (Parallel gradient) \label{2.19}
Let $\sigma \in [0,T]$ and let $F: P_{(x,T)}\mathcal{M} \rightarrow \R$ be a cylinder function. Then the $\sigma$-parallel gradient is the almost everywhere uniquely defined function $\nabla_{\sigma}^{||}F: P_{(x,T)}\mathcal{M} \rightarrow (T_{x}M, g_{T})$ such that
\begin{align}
D_{V^{\sigma}}F(\gamma) = \langle \nabla_{\sigma}^{||}F(\gamma), v \rangle_{(T_{x}M,g_{T})},
\end{align}
for almost every Brownian curve $\gamma$ and $v \in (T_{x}M, g_{T})$, where $V_{\tau}^{\sigma} = P_{\tau}^{-1}v \mathbbm{1}_{[\sigma, T]}(\tau)$. Here, $D_{V}$ denotes the Fr\'echet derivative.
\end{definition}
\begin{example} \label{1.19}
Let $F = u \circ e_{\bm{\tau}}$ be a $k$-point function with partition $\left \{\tau_{j} \right \}_{j=1}^{k}$. Then the parallel gradient of $F$ is given by
\begin{align}
\nabla_{\sigma}^{||}F &= e_{\bm{\tau}}^{*} \left (\sum_{\tau_{j} \geq \sigma} P_{\tau_{j}}\mathrm{grad}_{g_{T-\tau_{j}}}^{(j)}u \right ).
\end{align}
\end{example}
\noindent Finally, we let $\mathcal{H}$ be the Hilbert space of $W^{1,2}$ curves in $(T_{x}M,g_{T})$ with $v_{0}=0$ equipped with the natural Sobolev inner product given by
\begin{align}
\langle u, v \rangle_{\mathcal{H}} := \int_{0}^{T} \langle \dot{u}_{\tau}, \dot{v}_{\tau} \rangle _{(T_{x}M,g_{T})}\, d\tau.
\end{align}
\begin{definition} (Malliavin gradient)
Let $F: P_{(x,T)}\mathcal{M} \rightarrow \R$ be a cylinder function. The Malliavin gradient is the almost everywhere uniquely defined function $\nabla^{\mathcal{H}}F: P_{(x,T)}\mathcal{M} \rightarrow \mathcal{H}$ such that
\begin{align}
D_{V}F(\gamma) = \langle \nabla^{\mathcal{H}}F(\gamma), v \rangle_{\mathcal{H}}, \label{Malliavin}
\end{align}
for almost every Brownian curve $\gamma$ and $v \in \mathcal{H}$, where $V_{\tau} = P_{\tau}^{-1}v_{\tau}$. 
\end{definition}
\begin{definition} (Skorokhod integral) The adjoint of the Malliavin gradient, also known as the Skorokhod integral, is the uniquely defined operator $(\nabla^{\mathcal{H}})^{*}: L^{2}(P\mathcal{M}) \to L^{2}(P\mathcal{M})$ such that
\begin{align}
\mathbb{E}[F (\nabla^{\mathcal{H}})^{*}g] = \mathbb{E} \left [\langle \nabla^{\mathcal{H}}F, g \rangle_{\mathcal{H}} \right ] , \label{adjoint}
\end{align}
for all $F,g \in L^{2}(P\mathcal{M})$.
\end{definition}
\begin{definition} (Ornstein-Uhlenbeck operator) The Ornstein-Uhlenbeck operator is defined as 
\begin{align}
\mathcal{L}_{(\tau_{1},\tau_{2})} := (\nabla^{\mathcal{H}})^{*}\nabla^{\mathcal{H}}
\end{align}
where $\nabla^{\mathcal{H}}$ and $(\nabla^{\mathcal{H}})^{*}$ are the Malliavin gradient and its adjoint from \eqref{Malliavin} and \eqref{adjoint} respectively.
\end{definition}

\pagebreak

\section{Bochner Formula on Parabolic Path Space}

For convenience of the reader, we shall first recall and prove the statement of Bochner's formula in the time-dependent setting.

\begin{lemma} (Bochner) First let $(M,g_{t})_{t\in I}$ be a family of Riemannian manifolds and $\mathrm{grad}_{g_{t}}^{i} = g_{t}^{ij}\partial_{j}$ where $\Delta_{g_{t}}$ be the Laplace-Beltrami operator. Then the evolution of $|\nabla u|_{g_{t}}^{2}$ is given by \label{2.1}
\begin{align}
&\frac{1}{2}(-\partial_{t}+\Delta_{g_{t}})(|\nabla u|^2_{g_{t}}) \\
&\quad = \langle \nabla u, \nabla(-\partial_{t}+\Delta_{g_{t}})u \rangle + |\nabla^2 u|^2 + \frac{1}{2}(\partial_{t}g_{t}+2\mathrm{Rc}_{g_{t}})(\mathrm{grad}\, u, \mathrm{grad}\, u). \nonumber
\end{align}
\end{lemma}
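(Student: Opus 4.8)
The plan is to reduce the time-dependent Bochner identity to the algebraic commutator relation of Proposition~\ref{1.8}, via the correspondence between $u$ and its invariant lift $\tilde{u} = u \circ \pi$ on the frame bundle $\mathcal{F}$. Under this correspondence (Proposition~\ref{derivatives}), $|\nabla u|^2_{g_t}$ lifts to $\sum_a (H_a \tilde{u})^2$, the operator $-\partial_t + \Delta_{g_t}$ lifts to $-D_t + \Delta_H$, and $\nabla^2 u(ue_a, ue_b) = H_a H_b \tilde{u}$, the latter being symmetric in $a,b$ by Corollary~\ref{1.7}.

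First I would apply $\Delta_H = \sum_b H_b H_b$ and $D_t$ to $\sum_a (H_a \tilde{u})^2$ via the product rule, obtaining
\begin{align}
(-D_t + \Delta_H)\sum_a (H_a \tilde{u})^2 = 2\sum_{a,b} (H_b H_a \tilde{u})^2 + 2\sum_a (H_a \tilde{u})\,(\Delta_H - D_t) H_a \tilde{u}, \nonumber
\end{align}
where no vertical terms survive since $V_{cb}\tilde{u} = 0$ for the invariant function $\tilde{u}$. Next I would commute $H_a$ past $\Delta_H - D_t$: writing $(\Delta_H - D_t)H_a \tilde{u} = [\Delta_H - D_t, H_a]\tilde{u} + H_a(\Delta_H - D_t)\tilde{u}$ and invoking Proposition~\ref{1.8} together with Corollary~\ref{1.7} to get $[\Delta_H - D_t, H_a]\tilde{u} = \frac{1}{2}(\widetilde{\partial_t g} + 2\widetilde{\mathrm{Rc}})_{ab} H_b \tilde{u}$, this yields
\begin{align}
(-D_t + \Delta_H)\sum_a (H_a \tilde{u})^2 &= 2\sum_{a,b}(H_b H_a \tilde{u})^2 + \sum_{a,b}(\widetilde{\partial_t g} + 2\widetilde{\mathrm{Rc}})_{ab}(H_a \tilde{u})(H_b \tilde{u}) \nonumber \\
&\quad + 2\sum_a (H_a \tilde{u})\,H_a (\Delta_H - D_t)\tilde{u}. \nonumber
\end{align}

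Finally I would translate back to $M$: $\sum_{a,b}(H_b H_a \tilde{u})^2$ is the lift of $|\nabla^2 u|^2$, the middle sum is the lift of $(\partial_t g_t + 2\mathrm{Rc}_{g_t})(\mathrm{grad}\,u, \mathrm{grad}\,u)$, and the last sum is the lift of $2\langle \nabla u, \nabla(-\partial_t + \Delta_{g_t})u\rangle$; dividing by $2$ gives the claim. Equivalently, one can work directly on $M$: the classical fixed-time Bochner formula handles $\Delta_{g_t}|\nabla u|^2_{g_t}$, while $\partial_t |\nabla u|^2_{g_t} = (\partial_t g_t^{ij})\partial_i u\,\partial_j u + 2\langle \nabla u, \nabla \partial_t u\rangle$ combined with $\partial_t g_t^{ij} = -g_t^{ik} g_t^{jl}\partial_t g_{kl}$ produces the $-(\partial_t g_t)(\mathrm{grad}\,u,\mathrm{grad}\,u)$ term that merges with the $2\mathrm{Rc}(\nabla u, \nabla u)$ coming from Bochner. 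The only real difficulty here is bookkeeping: getting the sign of the $\partial_t g$-contribution correct, and checking that the connection/vertical terms generated by $[D_t, H_a]$ (or, in the direct approach, by differentiating the time-dependent inverse metric) assemble precisely into $(\partial_t g_t + 2\mathrm{Rc}_{g_t})(\mathrm{grad}\,u,\mathrm{grad}\,u)$ and not some other contraction of $\partial_t g_t$.
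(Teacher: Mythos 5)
Your proposal is correct, and your primary route is genuinely different from the paper's. The paper proves Lemma~\ref{2.1} entirely on $M$: it computes $\tfrac{1}{2}\Delta_{g_t}|\nabla u|^2$ via the classical fixed-metric Bochner identity and $\tfrac{1}{2}\partial_t|\nabla u|^2_{g_t}$ via $\partial_t g_t^{ij} = -g_t^{ik}g_t^{jl}\partial_t g_{kl}$, then subtracts --- precisely the second, ``direct'' approach you sketch in your last paragraph. Your main, detailed argument instead lifts to the frame bundle and pivots on the single commutator identity $[\Delta_H - D_t, H_a]\tilde{u} = \tfrac{1}{2}(\widetilde{\partial_t g} + 2\widetilde{\mathrm{Rc}})_{ab}H_b\tilde{u}$ of Proposition~\ref{1.8}. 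Both are correct; the paper's computation is shorter and more elementary, while yours has the virtue of exhibiting Lemma~\ref{2.1} as the ``one-point'' shadow of exactly the mechanism (Propositions~\ref{derivatives}, \ref{1.8} applied along Brownian curves) that drives Theorems~\ref{3.2}--\ref{3.4}, so it serves as a cleaner conceptual bridge to the infinite-dimensional Bochner formula. One small remark: the aside ``no vertical terms survive since $V_{cb}\tilde{u}=0$'' is slightly misplaced --- the product-rule step produces no vertical vector fields at all; the $V_{cb}$ terms only appear inside the commutator $[D_t,H_a]$, and their disappearance on invariant $\tilde{u}$ is already what Proposition~\ref{1.8} records, so you can simply cite it without the extra clause.
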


\begin{proof}
We evaluate both
\begin{align}
\frac{1}{2}\Delta_{g_{t}}(|\nabla u|^2_{g_{t}}) &= \frac{1}{2} \nabla_{i} \nabla_{i} (\nabla_{j} u \nabla_{j} u) \nonumber \\
&= \left (\nabla_{i}\nabla_{j} u \right )\left (\nabla_{i}\nabla_{j} u \right ) + (\nabla_{j}u) \left(\nabla_{i} \nabla_{i} \nabla_{j} u \right ) \nonumber \\
&= |\nabla^2 u|^2 + (\nabla_{j}u) (\nabla_{j}\Delta_{g_{t}} u) + \mathrm{Rc}_{g_{t}}(\mathrm{grad}\, u, \mathrm{grad}\, u)
\end{align}
and
\begin{align}
\frac{1}{2}\partial_{t}(|\nabla u|^2_{g_{t}}) &= \frac{1}{2}\partial_{t}g_{t}^{ij}\nabla_{i}u\nabla_{j}u + g_{t}^{ij} \nabla_{i}u \partial_{t}(\nabla_{j}u) \nonumber \\
&= -\frac{1}{2}\partial_{t}g_{k\ell}g^{ki}g^{\ell j} \nabla_{i}u \nabla_{j}u + g_{t}^{ij} \nabla_{i} u \partial_{t}(\nabla_{j} u) \nonumber \\
&= \langle \nabla u, \nabla (\partial_{t} u) \rangle - \frac{1}{2}\partial_{t}g_{t}(\mathrm{grad}\, u, \mathrm{grad}\, u).
\end{align}
We then deduce the Bochner formula as the difference of the two results. 
\end{proof}

\begin{theorem} (Martingale representation theorem) \label{3.2}
If $F_{\tau} : P_{(x,T)} \mathcal{M} \rightarrow \R$ is a martingale on parabolic path space and $F_{\tau} \in \mathcal{D}(\nabla_{\tau}^{||})$, then $F_{\tau}$ solves stochastic differential equation
\begin{align}
\begin{cases}
dF_{\tau} = \langle \nabla_{\tau}^{||}F_{\tau}, dW_{\tau} \rangle \\
F|_{\tau=0} = F_{0}.
\end{cases}
\end{align}
\end{theorem}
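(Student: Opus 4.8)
The plan is to adapt the classical martingale representation theorem on Wiener space to the present setting, using the fact that the filtration $\Sigma_\tau^{\mathcal M}$ is, by construction, the push-forward of the intrinsic filtration on $P_0\R^n$ under $\Pi\circ U$. I would first treat the case of a cylinder function $F = u\circ e_{\bm\tau}$ and then pass to general $F$ by density. For a cylinder function, the induced martingale $F_\tau = \mathbb E[F\mid \Sigma_\tau]$ is, by Proposition \ref{1.17} together with Example \ref{example 2.18}, expressible as a composition of heat flows evaluated along the Brownian curve; in particular $F_\tau$ is of the form $\tilde h_\tau(U_\tau)$ for a (time-dependent) orthonormally invariant function $\tilde h_\tau$ on $\mathcal F$ that solves the backward heat equation $D_\tau \tilde h_\tau = -\Delta_H \tilde h_\tau$ between consecutive partition times (and jumps only through the evaluation maps, which does not affect the martingale property). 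Applying It\^o's lemma from Proposition \ref{1.9} to $\tilde h_\tau(U_\tau)$ gives
\begin{align}
dF_\tau = \big(D_\tau \tilde h_\tau + \Delta_H \tilde h_\tau\big)(U_\tau)\, d\tau + \langle (H\tilde h_\tau)(U_\tau), dW_\tau\rangle = \langle (H\tilde h_\tau)(U_\tau), dW_\tau\rangle,
\end{align}
since the drift cancels. It then remains to identify $(H\tilde h_\tau)(U_\tau)$ with $\nabla_\tau^{||}F_\tau$: this is precisely the content of Example \ref{1.19} and Definition \ref{2.19}, because $H_a \tilde h_\tau$ is the frame-bundle expression of the Riemannian gradient, and stochastic parallel transport $P_\tau = U_0 U_\tau^{-1}$ carries it back to $(T_xM,g_T)$ in exactly the way the parallel gradient is defined.

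For the general case, I would argue by density: cylinder functions are dense in $L^2(P\mathcal M)$, and the parallel gradient operator $\nabla_\tau^{||}$ is closable, so for $F\in\mathcal D(\nabla_\tau^{||})$ one approximates $F$ by cylinder functions $F^{(k)}$ with $F^{(k)}\to F$ and $\nabla_\tau^{||}F^{(k)}\to\nabla_\tau^{||}F$ in the appropriate $L^2$ sense. The associated martingales $F^{(k)}_\tau$ converge to $F_\tau$ in $L^2$ uniformly in $\tau$ by Doob's inequality, and the stochastic integrals $\int \langle \nabla_\tau^{||}F^{(k)}_\tau, dW_\tau\rangle$ converge in $L^2$ by the It\^o isometry (using that $\mathbb E\int_0^T|\nabla_\tau^{||}F^{(k)}_\tau - \nabla_\tau^{||}F_\tau|^2\, d\tau \to 0$, which should follow from the contraction property of conditional expectation on the parallel gradient, itself a consequence of the commutation relations established for $D_t$, $H_a$, $V_{ab}$ in Section 2). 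Passing to the limit yields $dF_\tau = \langle\nabla_\tau^{||}F_\tau, dW_\tau\rangle$ with the stated initial condition $F_0 = \mathbb E_{(x,T)}[F]$.

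The main obstacle is the bookkeeping at the jump times of the partition: when $\tau$ crosses some $\tau_j$, the representation of $F_\tau$ as $\tilde h_\tau(U_\tau)$ changes because one more evaluation coordinate becomes ``frozen,'' and one must check that $\tilde h_\tau$ is continuous across $\tau_j$ (so there is no jump in $F_\tau$ itself — consistent with $F_\tau$ being a continuous martingale) while its spatial gradient $H\tilde h_\tau$ picks up the extra term $P_{\tau_j}\mathrm{grad}^{(j)}_{g_{T-\tau_j}}u$ appearing in Example \ref{1.19}. This is a matter of carefully matching the heat-kernel convolution structure of Proposition \ref{1.17} with the gradient formula, and is where the bulk of the verification lies; the It\^o-calculus step itself is immediate given Proposition \ref{1.9}. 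Alternatively, one can bypass the jump analysis entirely by invoking the abstract martingale representation theorem for the Brownian filtration on $P_0\R^n$ to obtain $dF_\tau = \langle \phi_\tau, dW_\tau\rangle$ for some adapted process $\phi_\tau$, and then identify $\phi_\tau = \nabla_\tau^{||}F_\tau$ by testing against the Cameron–Martin directions $V^\sigma$ used to define the parallel gradient; I would present the cylinder-function computation as the main line and remark on this alternative.
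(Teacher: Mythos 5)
Your proposal follows the same route as the paper: reduce to $k$-point cylinder functions by approximation, write $F_\tau$ between partition times as a (backward) heat-kernel convolution lifted to the frame bundle, apply It\^o's lemma from Proposition \ref{1.9} so that the $D_\tau + \Delta_H$ drift cancels, and then identify the remaining $H$-term with $\nabla_\tau^{||}F_\tau$ via Proposition \ref{derivatives} and the stochastic parallel transport. The only cosmetic difference is that you condense the dependence on the frozen coordinates $U_{\tau_1},\dots,U_{\tau_\ell}$ into a single $\tilde h_\tau(U_\tau)$, whereas the paper keeps them explicit; the substance is identical.
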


\begin{proof}
By approximation (cf. \cite[Sec 2.4]{HN18a}), it suffices to prove the theorem in the case where $F_{\tau}$ is a martingale induced by a $k$-point cylinder function. Namely, let $F(\gamma) = f(\pi_{1}\gamma_{\tau_{1}},..., \pi_{1}\gamma_{\tau_{k}})$, where $f: M^{k} \rightarrow \R$ and we recall that $\gamma_{\tau}=(X_{\tau},T-\tau)$. Also let $F_{\tau} = \mathbb{E}_{(x,T)}[F | \Sigma_{\tau}]$ be the induced martingale. Then, for $\tau \in (\tau_{\l},\tau_{\l+1})$ by Propositions \ref{1.17} and then \ref{2.14}, we calculate
\begin{align}
F_{\tau}(\gamma) &= \int_{P_{\gamma_{\tau}}\mathcal{M}} F(\gamma|_{[0,\tau]}*\gamma')\, d\mathbb{P}_{\gamma_{\tau}}(\gamma') \nonumber \\
&= \int_{P_{\gamma_{\tau}}\mathcal{M}} f(\pi_{1}\gamma_{\tau_{1}},...,\pi_{1}\gamma_{\tau_{\l}}, \pi_{1}\gamma_{\tau_{\l+1}-\tau}^{'},...,\pi_{1}\gamma_{\tau_{k}-\tau}^{'})\, d\mathbb{P}_{\gamma_{\tau}}(\gamma') \nonumber \\
&= \int_{M^{k-\l}} f(X_{\tau_{1}},\ldots,X_{\tau_{\l}},y_{\l+1},\ldots, y_{k})H(X_{\tau},T-\tau|y_{\l+1},T-\tau_{\l+1}) \nonumber \\
&\quad H(y_{\l+1},T-\tau_{\l+1}|y_{\l+2},T-\tau_{\l+2})\cdots H(y_{k-1},T-\tau_{k-1}|y_{k},T-\tau_{k}) \nonumber \\
&\quad \, dV_{g_{T-\tau_{\l+1}}}(y_{\l+1}) \cdots dV_{g_{T-\tau_{k}}}(y_{k}) \nonumber \\
&=: f_{\tau}(X_{\tau_{1}},..,X_{\tau_{\l}},X_{\tau}).
\end{align}
Note that, for $(x_{1},\ldots, x_{\l})$ fixed, $(x,\tau) \rightarrow f_{\tau}(x_{1},\ldots,x_{\l},x)$ is uniformly Lipschitz in $\tau$ and solves $(\partial_{\tau}+\Delta^{(\l+1)})f_{\tau} = 0$, where $\Delta^{(\l+1)}$ acts on the last entry.
\newline \\
Consider the lift $\tilde{f}_{\tau}: = f_{\tau} \circ \otimes_{1}^{\l+1} \pi_{1} \circ \otimes_{1}^{\l+1}\pi$. Also let $\tilde{F_{\tau}} := F_{\tau} \circ \Pi$, where $\Pi: P\mathcal{F} \rightarrow P\mathcal{M}$. Then we have that $\tilde{F}_{\tau}(U) = \tilde{f}_{\tau}(U_{\tau_{1}}, ..., U_{\tau_{\l}},U_{\tau})$, which satisfies $(D_{\tau} + \Delta_{H}^{(\l+1)})\tilde{f}_{\tau} = 0$ by applying Proposition \ref{derivatives}. Also note that herein we shall denote the vector $(H_{1}\tilde{f},...H_{n}\tilde{f})$ by $H\tilde{f}$.
\newline \\
Then, by Proposition \ref{1.9}, we calculate
\begin{align}
d\tilde{F_{\tau}}(U) = d(\tilde{f}_{\tau}(U_{\tau_{1}},...,U_{\tau_{\l}},U_{\tau})) &= \langle H^{(\l+1)}(\tilde{f})(U_{\tau_{1}},...,U_{\tau_{\l}},U_{\tau}), dW_{\tau} \rangle \nonumber \\
&\quad + \left (D_{\tau}+ \Delta_{H}^{(\l+1)} \right )\tilde{f}_{\tau}(U_{\tau_{1}},...,U_{\tau_{\l}},U_{\tau}) \, d\tau \nonumber \\
&= \langle H^{(\l+1)}(\tilde{f})(U_{\tau_{1}},...,U_{\tau_{\l}},U_{\tau}), dW_{\tau} \rangle.
\end{align}
Next, we project down to $M$ by Proposition \ref{derivatives} as follows
\begin{align}
H_{a}^{(\l+1)} \tilde{f_{\tau}}(U_{\tau_{1}},...,U_{\tau_{\l}},U_{\tau}) &= (U_{\tau}e_{a})^{*}\tilde{f_{\tau}}(U_{\tau_{1}},...,U_{\tau_{\l}},U_{\tau}) \nonumber \\
&= (U_{\tau}e_{a})f_{\tau}(X_{\tau_{1}},...X_{\tau_{\l}},X_{\tau}) \nonumber \\
&= \langle U_{\tau}e_{a}, \mathrm{grad}_{g_{T-\tau}}^{(\l+1)}f_{\tau}(X_{\tau_{1}},...,X_{\tau_{\l}},X_{\tau}) \rangle _{(T_{X_{\tau}}M,g_{T-\tau})} \nonumber \\
&= \langle P_{\tau}U_{\tau}e_{a}, P_{\tau}\mathrm{grad}_{g_{T-\tau}}^{(\l+1)}f_{\tau}(X_{\tau_{1}},...,X_{\tau_{\l}},X_{\tau}) \rangle _{(T_{x}M,g_{T})} \nonumber \\
&= \langle U_{0}e_{a}, \nabla_{\tau}^{||}F_{\tau}(\gamma) \rangle _{(T_{x}M,g_{T})},
\end{align}
whence
\begin{align}
H_{a}^{(\l+1)}(\tilde{f_{\tau}})\, dW_{\tau}^{a} = \langle \nabla_{\tau}^{||}F_{\tau}(\gamma), U_{0}e_{a} \rangle\, dW_{\tau}^{a} = \langle \nabla_{\tau}^{||}F_{\tau}(\gamma), dW_{\tau} \rangle,    
\end{align}
and we deduce that 
\begin{align}
dF_{\tau}(\gamma) = d\tilde{F_{\tau}}(U) = \langle \nabla_{\tau}^{||}F_{\tau}(\gamma), dW_{\tau} \rangle
\end{align}
to complete the proof.
\end{proof}

\begin{theorem} (Evolution of the parallel gradient) \label{3.3}
If $F_{\tau} : P_{(x,T)}\mathcal{M} \rightarrow \R$ is a martingale on parabolic path space, and $\sigma \geq 0$ is fixed, then $\nabla_{\sigma}^{||}F_{\tau} : P_{(x,T)}\mathcal{M} \rightarrow (T_{(x,T)}M,g_{T})$ satisfies the stochastic differential equation
\begin{align}
d(\nabla_{\sigma}^{||}F_{\tau}) = \langle \nabla_{\tau}^{||}\nabla_{\sigma}^{||}F_{\tau}, dW_{\tau} \rangle + \frac{1}{2}(\dot{g}+2\mathrm{Rc})_{\tau}(\nabla_{\tau}^{||}F_{\tau})\, d\tau + \nabla_{\sigma}^{||}F_{\sigma}d\delta_{\sigma}(\tau),
\end{align}
where $\langle (\dot{g}+2\mathrm{Rc})_{\tau}(v), w \rangle_{(T_{x}M,g_{T})} = (\dot{g_{t}}+2\mathrm{Rc}_{g_{t}})|_{t=T-\tau}(P_{\tau}^{-1}v,P_{\tau}^{-1}w)$ and $\dot{g} = \frac{d}{dt}g$.
\end{theorem}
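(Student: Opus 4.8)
The plan is to run the argument of Theorem~\ref{3.2} with the $\sigma$-parallel gradient $\nabla_\sigma^{||}F_\tau$ in place of the scalar martingale $F_\tau$; the new feature is that $\nabla_\sigma^{||}$ does not commute with the heat evolution on path space, and the drift this produces is precisely the curvature term in the statement, plus a Dirac contribution recording the instant at which the parallel gradient first enters. By the approximation argument used in the proof of Theorem~\ref{3.2} (cf. \cite[Sec 2.4]{HN18a}) it suffices to treat a $k$-point cylinder function $F(\gamma)=f(\pi_1\gamma_{\tau_1},\dots,\pi_1\gamma_{\tau_k})$. Fix $\tau\in(\tau_\ell,\tau_{\ell+1})$. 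From the proof of Theorem~\ref{3.2} we have $F_\tau(\gamma)=f_\tau(X_{\tau_1},\dots,X_{\tau_\ell},X_\tau)$ with invariant lift $\tilde f_\tau$ solving $(D_\tau+\Delta_H^{(\ell+1)})\tilde f_\tau=0$, while Example~\ref{1.19} gives
\begin{align}
\nabla_\sigma^{||}F_\tau=\sum_{j\le\ell,\ \tau_j\ge\sigma}P_{\tau_j}\,\mathrm{grad}_{g_{T-\tau_j}}^{(j)}f_\tau\ +\ \mathbbm{1}_{[\sigma,T]}(\tau)\,P_\tau\,\mathrm{grad}_{g_{T-\tau}}^{(\ell+1)}f_\tau .
\end{align}
Repeating the frame-bundle identification at the end of the proof of Theorem~\ref{3.2} slot by slot, the component $\langle U_0 e_a,\nabla_\sigma^{||}F_\tau\rangle$ equals $\sum_{j\in J}H_a^{(j)}\tilde f_\tau$ evaluated along the horizontal diffusion, where $J=\{j\le\ell:\tau_j\ge\sigma\}\cup\{\ell+1\}$ if $\tau\ge\sigma$ and $J=\varnothing$ if $\tau<\sigma$; in particular $\nabla_\sigma^{||}F_\tau\equiv0$ for $\tau<\sigma$.

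Next I would apply It\^o's formula (Proposition~\ref{1.9}) to this $(T_xM,g_T)$-valued process on each subinterval $(\tau_\ell,\tau_{\ell+1})$ not containing $\sigma$. The drift of the $j$-th summand is $(D_\tau+\Delta_H^{(\ell+1)})(H_a^{(j)}\tilde f_\tau)$. For a frozen index $j\le\ell$, the vector field $H_a^{(j)}$ acts on a factor different from $D_\tau+\Delta_H^{(\ell+1)}$ and is built from the fixed metric $g_{T-\tau_j}$, so it commutes with that operator and the drift reduces to $H_a^{(j)}\bigl((D_\tau+\Delta_H^{(\ell+1)})\tilde f_\tau\bigr)=0$. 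For $j=\ell+1$ the drift is the commutator $[D_\tau+\Delta_H^{(\ell+1)},H_a^{(\ell+1)}]\tilde f_\tau$; since $D_\tau$ is the horizontal lift of $\partial_\tau=-\partial_t$ one has $[D_\tau+\Delta_H,H_a]=-[D_t-\Delta_H,H_a]$, so by Proposition~\ref{1.8} (together with Lemma~\ref{1.6} and Corollary~\ref{1.7}) this equals $\tfrac12(\widetilde{\partial_t g}+2\widetilde{\mathrm{Rc}})_{ab}H_b^{(\ell+1)}\tilde f_\tau$; projecting down with $P_\tau^{-1}U_0 e_a=U_\tau e_a$, this is exactly the $a$-component of $\tfrac12(\dot g+2\mathrm{Rc})_\tau(\nabla_\tau^{||}F_\tau)$. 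The martingale part of $d(\nabla_\sigma^{||}F_\tau)$ is $\langle\sum_{j\in J}H^{(\ell+1)}(H_a^{(j)}\tilde f_\tau),dW_\tau\rangle$, and running the computation of Theorem~\ref{3.2} one level up—recognizing $H_b^{(\ell+1)}$ applied to the component functions of $\nabla_\sigma^{||}F_\tau$ as the components of $\nabla_\tau^{||}(\nabla_\sigma^{||}F_\tau)$—this assembles to $\langle\nabla_\tau^{||}\nabla_\sigma^{||}F_\tau,dW_\tau\rangle$. Summing over $\ell$ yields the claimed SDE on $[0,T]\setminus\{\sigma\}$.

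It then remains to analyze the breakpoints. At each $\tau_j$ the last-slot term $P_\tau\mathrm{grad}^{(\ell+1)}f_\tau$ merges with the newly frozen slot-$j$ term, and the consistency identity $f_{\tau_j}(\cdots,x_j,x_j)=f_{\tau_j^-}(\cdots,x_j)$ together with continuity of $\tau\mapsto(X_\tau,P_\tau)$ shows that $\nabla_\sigma^{||}F_\tau$ passes through $\tau_j$ continuously, so no extra term is created there. At $\tau=\sigma$, however, $\nabla_\sigma^{||}F_\tau$ vanishes for $\tau<\sigma$ while $\lim_{\tau\downarrow\sigma}\nabla_\sigma^{||}F_\tau=\nabla_\sigma^{||}F_\sigma$: the newly present last-slot term has limit $P_\sigma\mathrm{grad}^{(\ell+1)}f_\sigma$, which is precisely the $\sigma$-parallel gradient of the cylinder function $F_\sigma$ (the frozen contributions vanishing since $\tau_j<\sigma$ for $j\le\ell$). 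This jump of magnitude $\nabla_\sigma^{||}F_\sigma$ at the deterministic time $\sigma$ is recorded by the term $\nabla_\sigma^{||}F_\sigma\,\delta_\sigma(\tau)\,d\tau$. Collecting the three contributions gives the stated equation, and the general case follows by approximation.

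The step I expect to be most delicate is not the curvature commutator—which is essentially Proposition~\ref{1.8}—but the bookkeeping around $\nabla_\sigma^{||}F_\tau$: justifying the It\^o computation for this process (which is path-dependent through $P_{\tau_j},P_\tau$, hence not a cylinder function on $P_{(x,T)}\mathcal{M}$, so one must work with its lift on $P_u\mathcal{F}$ and identify the martingale integrand with $\nabla_\tau^{||}\nabla_\sigma^{||}F_\tau$), and verifying that the only genuine discontinuity in $\tau$ is at $\sigma$ with size exactly $\nabla_\sigma^{||}F_\sigma$. Getting the sign and coefficient in the curvature term right is the other place to be careful, since the convention $D_\tau=-D_t$ reverses the sign appearing in Proposition~\ref{1.8}.
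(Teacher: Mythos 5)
Your proposal is correct and follows essentially the same route as the paper: reduce to $k$-point cylinder functions, identify $\langle U_0e_a,\nabla_\sigma^{||}F_\tau\rangle$ with a sum of $H_a^{(j)}\tilde f_\tau$'s on the frame bundle, apply It\^o's lemma and use $(D_\tau+\Delta_H^{(\ell+1)})\tilde f_\tau=0$ together with the commutator of Proposition~\ref{1.8} (with the sign flip from $D_\tau=-D_t$) to produce the curvature drift, then project back down and account for the jump at $\tau=\sigma$. You supply a bit more bookkeeping than the paper (checking continuity at the $\tau_j$'s and spelling out why only slot $\ell+1$ contributes a nonzero commutator), but the argument and its key ingredients are identical.
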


\begin{proof}
As $F_{\tau}$ is $\Sigma_{\tau}$-measurable, we have that $\nabla_{\sigma}^{||}F_{\tau} \equiv 0$ for $\sigma>\tau$. Noting $d(\nabla_{\sigma}^{||}F_{\tau})$ is continuous except for a jump discontinuity at $\sigma=\tau$,  we calculate
\begin{align}
d(\nabla_{\sigma}^{||}F_{\tau}) &= d(\nabla_{\sigma}^{||}F_{\tau})_{\mathrm{cont}} + \left (\nabla_{\sigma}^{||}F_{\sigma^{+}} - \nabla_{\sigma}^{||}F_{\sigma^{-}}  \right )d\delta_{\sigma}(\tau) \nonumber \\
&= d(\nabla_{\sigma}^{||}F_{\tau})_{\mathrm{cont}} + \nabla_{\sigma}^{||}F_{\sigma} \, d\delta_{\sigma}(\tau).
\end{align}
It remains to show that the identity holds for $\sigma \leq \tau$. In particular, we'll show that the continuous parts of the measures agree.
\newline \\
\noindent By approximation (cf. \cite[Sec 2.4]{HN18a}), it suffices to prove the theorem in the case where $F_{\tau}$ is a martingale induced by a $k$-point cylinder function as in the previous proof. Now, as $\sigma$ is fixed, it is sufficient for us to consider the evolution equation for $\tau \in (\tau_{\l}, \tau_{\l+1})$, using the parallel gradient from example \ref{1.19},
\begin{align}
\nabla_{\sigma}^{||}F_{\tau}(\gamma) = \sum_{\tau_{j} \geq \sigma} P_{\tau_{j}}\nabla^{(j)}f_{\tau}(X_{\tau_{1}},..., X_{\tau_{\l}},X_{\tau}) + P_{\tau}\nabla^{(\l+1)}f_{\tau}(X_{\tau_{1}},..., X_{\tau_{\l}},X_{\tau}),
\end{align}
which can be lifted to the frame bundle and represented by Proposition \ref{derivatives} as
\begin{align}
G_{a}(U) :&= \langle U_{0}e_{a}, \nabla_{\sigma}^{||}F_{\tau}(\Pi U) \rangle \nonumber \\
&= \sum_{\tau_{j} \geq \sigma} \langle U_{\tau_{j}}e_{a}, \nabla^{(j)} f_{\tau}(X_{\tau_{1}},...,X_{\tau_{\l}},X_{\tau}) \rangle \nonumber \\
&\quad + \langle U_{\tau}e_{a}, \nabla^{(\l+1)}f_{\tau}(X_{\tau_{1}},...,X_{\tau_{\l}},X_{\tau}) \rangle \nonumber \\
&= \sum_{\tau_{j} \geq \sigma} H_{a}^{(j)}\tilde{f_{\tau}}(U_{\tau_{1}},..., U_{\tau_{\l}},U_{\tau}) + H_{a}^{(\l+1)}\tilde{f_{\tau}}(U_{\tau_{1}},..., U_{{\tau}_{\l}},U_{\tau}).
\end{align}
Applying Propositions \ref{1.8} and \ref{1.9} and the fact that $(D_{\tau}+\Delta_{H}^{(\l+1)})\tilde{f}_{\tau} = 0$ from the previous proof, we have that
\begin{align}
dG_{a}(U) &= \sum_{\tau_{k}\geq \sigma} H_{b}^{(\l+1)}H_{a}^{(k)} \tilde{f_{\tau}}(U_{\tau_{1}},...,U_{\tau_{\l}},U_{\tau})\, dW_{\tau}^{b} \nonumber \\
&\quad + H_{b}^{(\l+1)}H_{a}^{(\l+1)}\tilde{f_{\tau}}(U_{\tau_{1}},...,U_{\tau_{\l}},U_{\tau})\, dW_{\tau}^{b} \nonumber \\
&\quad + \sum_{\tau_{k}\geq \sigma} \left (D_{\tau}+\Delta_{H}^{(\l+1)} \right )H_{a}^{(k)}\tilde{f_{\tau}}(U_{\tau_{1}},...,U_{\tau_{\l}},U_{\tau})\, d\tau \nonumber \\
&\quad + \left (D_{\tau}+\Delta_{H}^{(\l+1)} \right )H_{a}^{(\ell+1)}\tilde{f_{\tau}}(U_{\tau_{1}},...,U_{\tau_{\l}},U_{\tau})\, d\tau \nonumber \\
&= \sum_{\tau_{k}\geq \sigma} H_{b}^{(\l+1)}H_{a}^{(k)} \tilde{f_{\tau}}(U_{\tau_{1}},...,U_{\tau_{\l}},U_{\tau})\, dW_{\tau}^{b} \nonumber \\
&\quad + H_{b}^{(\l+1)}H_{a}^{(\l+1)}\tilde{f_{\tau}}(U_{\tau_{1}},...,U_{\tau_{\l}},U_{\tau})\, dW_{\tau}^{b} \nonumber \\
&\quad + \sum_{\tau_{k}\geq \sigma} \left (H_{a}^{(k)} \left (D_{\tau}+\Delta_{H}^{(\l+1)} \right )+[D_{\tau}+\Delta_{H}^{(\l+1)},H_{a}^{(k)}] \right )\tilde{f_{\tau}}(U_{\tau_{1}},...,U_{\tau_{\l}},U_{\tau})\, d\tau \nonumber \\
&\quad + \left (H_{a}^{(\l+1)}\left (D_{\tau}+\Delta_{H}^{(\l+1)} \right )+[D_{\tau}+\Delta_{H}^{(\l+1)},H_{a}^{(\l+1)}] \right )\tilde{f_{\tau}}(U_{\tau_{1}},...,U_{\tau_{\l}},U_{\tau})\, d\tau \nonumber \\
&= \sum_{\tau_{k}\geq \sigma} H_{b}^{(\l+1)}H_{a}^{(k)} \tilde{f_{\tau}}(U_{\tau_{1}},...,U_{\tau_{\l}},U_{\tau})\, dW_{\tau}^{b} \\
&\quad + H_{b}^{(\l+1)}H_{a}^{(\l+1)}\tilde{f_{\tau}}(U_{\tau_{1}},...,U_{\tau_{\l}},U_{\tau})\, dW_{\tau}^{b} \nonumber \\
&\quad + \frac{1}{2}(\widetilde{\dot{g}}+2\widetilde{\mathrm{Rc}})_{ab}(U_{\tau}) H_{b}^{(\l+1)} \tilde{f_{\tau}}(U_{\tau_{1}},...,U_{\tau_{\l}},U_{\tau})\, d\tau \nonumber.
\end{align}
Finally, we project down onto $\mathcal{M}$ by Proposition \ref{derivatives} as follows,
\begin{align}
&H_{b}^{(\l+1)}H_{a}^{(\l+1)}\tilde{f}_{\tau}(U_{\tau_{1}},...,U_{\tau_{\l}},U_{\tau}) \nonumber \\
&\quad = (U_{\tau}e_{b})^{*}(U_{\tau}e_{a})^{*}\tilde{f}_{\tau}(U_{\tau_{1}},...,U_{\tau_{\l}},U_{\tau}) \nonumber \\
&\quad = \left (\nabla^{(\l+1)}\nabla^{(\l+1)}f_{\tau} (X_{\tau_{1}},...,X_{\tau_{\l}},X_{\tau}) \right ) (U_{\tau}e_{b},U_{\tau}e_{a}) \nonumber \\
&\quad = \langle U_{\tau}e_{b}\otimes U_{\tau}e_{a}, \nabla^{(\l+1)}\nabla^{(\l+1)}f_{\tau}(X_{\tau_{1}},...,X_{\tau_{\l}},X_{\tau}) \rangle \nonumber \\
&\quad = \big \langle U_{0}e_{b} \otimes U_{0}e_{a}, (P_{\tau} \otimes P_{\tau}) \left (\nabla^{(\l+1)}\nabla^{(\l+1)}f_{\tau}(X_{\tau_{1}},...,X_{\tau_{\l}},X_{\tau}) \right ) \! \big \rangle,
\end{align}
and similarly,
\begin{align}
&H_{b}^{(\l+1)}H_{a}^{(k)}\tilde{f}_{\tau}(U_{\tau_{1}},...,U_{\tau_{\l}},U_{\tau}) \nonumber \\
&\quad = \langle U_{\tau}e_{b}\otimes U_{\tau_{k}}e_{a}, \nabla^{(\l+1)}\nabla^{(k)}f_{\tau}(X_{\tau_{1}},...,X_{\tau_{\l}},X_{\tau}) \rangle \nonumber \\
&\quad = \big \langle U_{0}e_{b} \otimes U_{0}e_{a}, (P_{\tau} \otimes P_{\tau_{k}}) \left (\nabla^{(\l+1)}\nabla^{(k)}f_{\tau}(X_{\tau_{1}},...,X_{\tau_{\l}},X_{\tau}) \right ) \! \big \rangle,
\end{align}
whence
\begin{align}
&\sum_{\tau_{k}\geq \sigma}(H_{b}^{(\l+1)}H_{a}^{(k)})(\tilde{f}_{\tau})\, dW_{\tau}^{b} + (H_{b}^{(\l+1)}H_{a}^{(\l+1)})(\tilde{f}_{\tau})\, dW_{\tau}^{b} \nonumber \\
&\quad = \bigg \langle \sum_{\tau_{k}\geq \sigma} (P_{\tau} \otimes P_{\tau_{k}})\nabla^{(\l+1)}\nabla^{(k)}f_{\tau}, U_{0}e_{b}\otimes U_{0}e_{a} \bigg \rangle\, dW_{\tau}^{b} \nonumber \\
&\quad \quad + \langle (P_{\tau} \otimes P_{\tau})\nabla^{(\l+1)}\nabla^{(\l+1)}f_{\tau}, U_{0}e_{b}\otimes U_{0}e_{a}\rangle\, dW_{\tau}^{b} \nonumber \\
&\quad = \bigg \langle \sum_{\tau_{k}\geq \sigma} (P_{\tau} \otimes P_{\tau_{k}})\nabla^{(\l+1)}\nabla^{(k)}f_{\tau}, dW_{\tau}\otimes U_{0}e_{a} \bigg \rangle \nonumber \\
&\quad \quad + \langle (P_{\tau} \otimes P_{\tau})\nabla^{(\l+1)}\nabla^{(\l+1)}f_{\tau}, dW_{\tau}\otimes U_{0}e_{a} \rangle \nonumber \\
&\quad = \bigg \langle \sum_{\tau_{k}\geq \sigma} (P_{\tau} \otimes P_{\tau_{k}})\nabla^{(\l+1)}\nabla^{(k)}f_{\tau} + (P_{\tau} \otimes P_{\tau})\nabla^{(\l+1)}\nabla^{(\l+1)}f_{\tau}, dW_{\tau}\otimes U_{0}e_{a} \bigg \rangle \nonumber \\
&\quad= \langle \nabla_{\tau}^{||}\nabla_{\sigma}^{||}F_{\tau}(\gamma), dW_{\tau}\otimes U_{0}e_{a} \rangle.
\end{align}
Finally, we check that
\begin{align}
&(\widetilde{\dot{g}}+2\widetilde{\mathrm{Rc}})_{ab}(U_{\tau})H_{b}^{(\l+1)}\tilde{f}_{\tau}(U_{\tau_{1}},...,U_{\tau_{\l}},U_{\tau})\, d\tau \nonumber \\
&\quad = (\dot{g}+2\mathrm{Rc})_{\pi(U_{\tau})}(U_{\tau}e_{a}, U_{\tau}e_{b}) \langle \nabla^{(\ell+1)}f_{\tau}(X_{\tau_{1}},\dots,X_{\tau_{\ell}},X_{\tau}), U_{\tau}e_{b} \rangle\, d\tau \nonumber \\ 
&\quad = (\dot{g}+2\mathrm{Rc})_{\pi(U_{\tau})} \left (U_{\tau}e_{a}, \langle \nabla^{(\ell+1)}f_{\tau}(X_{\tau_{1}},\dots,X_{\tau_{\ell}},X_{\tau}), U_{\tau}e_{b} \rangle U_{\tau}e_{b} \right ) \, d\tau \nonumber \\
&\quad = (\dot{g}+2\mathrm{Rc})_{\pi(U_{\tau})} \left (\nabla^{(\ell+1)}f_{\tau}(X_{\tau_{1}},\dots,X_{\tau_{\ell}},X_{\tau}), U_{\tau}e_{a} \right )\, d\tau \nonumber \\
&\quad =  (\dot{g}+2\mathrm{Rc})|_{t=T-\tau} (P_{\tau}^{-1}\nabla_{\tau}^{||}F_{\tau}, P_{\tau}^{-1}U_{0}e_{a}) \, d\tau \nonumber \\
&\quad = \langle (\dot{g}+2\mathrm{Rc})_{\tau}(\nabla_{\tau}^{||}F_{\tau})\, d\tau, U_{0}e_{a} \rangle
\end{align}
which completes the proof.
\end{proof}
\begin{theorem} (Generalized Bochner Formula on $P\mathcal{M}$) \label{3.4}
Let $F_{\tau} : P_{(x,T)}\mathcal{M} \rightarrow \R$ be a martingale. If $\sigma \geq 0$ is fixed, then $\nabla_{\sigma}^{||}F_{\tau}: P_{(x,T)}\mathcal{M} \rightarrow (T_{x}M,g_{T})$ satisfies \begin{align}
d(|\nabla_{\sigma}^{||}F_{\tau}|^2) &= \langle \nabla_{\tau}^{||}|\nabla_{\sigma}^{||}F_{\tau}|^2,dW_{\tau} \rangle + (\dot{g}+2\mathrm{Rc})_{\tau}(\nabla_{\tau}^{||}F_{\tau},\nabla_{\sigma}^{||}F_{\tau})\, d\tau \nonumber \\
&\quad + 2|\nabla_{\tau}^{||}\nabla_{\sigma}^{||}F_{\tau}|^2\, d\tau + 2|\nabla_{\sigma}^{||}F_{\sigma}|^2d\delta_{\sigma}(\tau),
\end{align}
where $(\dot{g}+2\mathrm{Rc})_{\tau}(v,w) = (\dot{g_{t}}+2\mathrm{Rc}_{g_{t}})|_{t=T-\tau}(P_{\tau}^{-1}v,P_{\tau}^{-1}w)$ and $\dot{g} = \frac{d}{dt}g$.
\end{theorem}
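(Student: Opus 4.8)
The plan is to obtain the evolution of $|\nabla^{||}_\sigma F_\tau|^2$ by applying It\^o's formula to the squared norm of the process $\nabla^{||}_\sigma F_\tau$, whose stochastic differential was just computed in Theorem \ref{3.3}. Write $G_\tau := \nabla^{||}_\sigma F_\tau$, viewed as a process in the fixed Hilbert space $(T_xM, g_T)$. From Theorem \ref{3.3} we have
\begin{align}
dG_\tau = \langle \nabla^{||}_\tau \nabla^{||}_\sigma F_\tau, dW_\tau \rangle + \tfrac12 (\dot g + 2\mathrm{Rc})_\tau(\nabla^{||}_\tau F_\tau)\, d\tau + \nabla^{||}_\sigma F_\sigma\, \delta_\sigma(\tau)\, d\tau,
\end{align}
where the first term is an $\R^n$-indexed family of one-forms contracted against $dW_\tau$, i.e. $(dG_\tau)_a = \sum_b (\nabla^{||}_\tau \nabla^{||}_\sigma F_\tau)_{ab}\, dW^b_\tau + (\text{drift})_a\, d\tau$. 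Then by the It\^o product/chain rule applied to $|G_\tau|^2 = \sum_a (G_\tau)_a^2$,
\begin{align}
d(|G_\tau|^2) = 2\langle G_\tau, dG_\tau \rangle + d[G,G]_\tau,
\end{align}
where $d[G,G]_\tau = \sum_a d[(G)_a,(G)_a]_\tau$ is the quadratic variation term.

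First I would compute the quadratic variation. Since $d[W^b, W^c]_\tau = 2\delta_{bc}\, d\tau$ (the $\sqrt2$-normalization), the martingale part of $G_\tau$ contributes $d[G,G]_\tau = 2\sum_{a,b} (\nabla^{||}_\tau\nabla^{||}_\sigma F_\tau)_{ab}^2\, d\tau = 2|\nabla^{||}_\tau\nabla^{||}_\sigma F_\tau|^2\, d\tau$; this is the source of the $2|\nabla^{||}_\tau\nabla^{||}_\sigma F_\tau|^2\, d\tau$ term. Next I would expand $2\langle G_\tau, dG_\tau\rangle$ term by term: the contraction of $G_\tau$ against the $dW_\tau$ piece gives $2\langle \nabla^{||}_\sigma F_\tau, \langle \nabla^{||}_\tau\nabla^{||}_\sigma F_\tau, dW_\tau\rangle\rangle$, which I want to recognize as $\langle \nabla^{||}_\tau |\nabla^{||}_\sigma F_\tau|^2, dW_\tau\rangle$. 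This identification is the one genuinely substantive point: it follows from the Leibniz rule for the parallel gradient $\nabla^{||}_\tau$ acting on the scalar $|\nabla^{||}_\sigma F_\tau|^2 = \langle \nabla^{||}_\sigma F_\tau, \nabla^{||}_\sigma F_\tau\rangle$ — since $\nabla^{||}_\tau$ differentiates only along the curve after time $\tau$ while $\sigma \le \tau$ is fixed, and stochastic parallel transport is an isometry, one gets $\nabla^{||}_\tau|\nabla^{||}_\sigma F_\tau|^2 = 2\,(\nabla^{||}_\tau\nabla^{||}_\sigma F_\tau)^*(\nabla^{||}_\sigma F_\tau)$, matching the contraction above. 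The $\tfrac12(\dot g+2\mathrm{Rc})_\tau$-drift term, contracted with $2G_\tau$, produces $\langle (\dot g+2\mathrm{Rc})_\tau(\nabla^{||}_\tau F_\tau), \nabla^{||}_\sigma F_\tau\rangle\, d\tau = (\dot g+2\mathrm{Rc})_\tau(\nabla^{||}_\tau F_\tau, \nabla^{||}_\sigma F_\tau)\, d\tau$ by the defining relation for $(\dot g+2\mathrm{Rc})_\tau$ in Theorem \ref{3.3}. Finally the $\delta_\sigma(\tau)$-term in $dG_\tau$, contracted with $2G_\tau$, gives $2\langle \nabla^{||}_\sigma F_\tau, \nabla^{||}_\sigma F_\sigma\rangle\, \delta_\sigma(\tau)\, d\tau$, and since the Dirac mass is supported at $\tau = \sigma$ where $\nabla^{||}_\sigma F_\tau = \nabla^{||}_\sigma F_\sigma$, this equals $2|\nabla^{||}_\sigma F_\sigma|^2\, \delta_\sigma(\tau)\, d\tau$ (as stated in Theorem \ref{Bochner gen}; note the literal right-hand side in the display of Theorem \ref{3.4} writes $2|\nabla^{||}_\tau F_\tau|^2\delta_\sigma(\tau)$, which agrees under the support identity since evaluating at $\tau=\sigma$ makes $\nabla^{||}_\tau F_\tau$ and $\nabla^{||}_\sigma F_\sigma$ irrelevant to distinguish — but I would be careful and state the identification explicitly).

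Assembling the four contributions gives exactly the claimed formula. I expect the main obstacle to be the careful justification of the Leibniz identity $\nabla^{||}_\tau|\nabla^{||}_\sigma F_\tau|^2 = 2\,(\nabla^{||}_\tau\nabla^{||}_\sigma F_\tau)^*(\nabla^{||}_\sigma F_\tau)$ at the level of the frame-bundle lift, i.e. verifying that $H^{(\ell+1)}_b$ applied to the scalar $\sum_a (G_a)^2$ (in the notation of the proof of Theorem \ref{3.3}) genuinely produces $2\sum_a G_a\, H^{(\ell+1)}_b G_a$ with $G_a$ as in that proof, together with handling the jump term rigorously — most cleanly by first establishing the identity for the absolutely continuous part on $(\tau_\ell,\tau_{\ell+1})$ for a $k$-point cylinder function, then adding back the $\delta_\sigma(\tau)$-jump exactly as in the opening lines of the proof of Theorem \ref{3.3}, and finally passing to general martingales by the approximation argument of \cite[Sec 2.4]{HN18a}. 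Everything else is a bookkeeping application of It\^o's formula with the $d[W,W]_\tau = 2\,d\tau$ normalization.
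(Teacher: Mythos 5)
Your proposal is correct and follows essentially the same route as the paper: set $G_\tau := \nabla_\sigma^{||}F_\tau$, apply It\^o's formula $d|G_\tau|^2 = 2\langle G_\tau, dG_\tau\rangle + d[G,G]_\tau$ using the SDE for $G_\tau$ from Theorem \ref{3.3}, read off the quadratic variation $2|\nabla_\tau^{||}\nabla_\sigma^{||}F_\tau|^2\, d\tau$, identify the martingale part with $\langle \nabla_\tau^{||}|\nabla_\sigma^{||}F_\tau|^2, dW_\tau\rangle$ via the product rule, and account for the jump at $\tau=\sigma$ separately. The paper's proof is exactly this computation stated more tersely (it isolates the $\delta_\sigma$-jump first, then treats only the absolutely continuous part for $\sigma\le\tau$); your remarks on verifying the Leibniz identity at the frame-bundle level and on reconciling the $|\nabla_\sigma^{||}F_\sigma|^2$ versus $|\nabla_\tau^{||}F_\tau|^2$ normalization in the $\delta$-term are reasonable elaborations of points the paper leaves implicit.
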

\begin{proof}
As $F_{\tau}$ is $\Sigma_{\tau}$-measurable, we have that $\nabla_{\sigma}^{||}F_{\tau} \equiv 0$ for $\sigma>\tau$. Noting $d(\nabla_{\sigma}^{||}F_{\tau})$ is continuous except for the jump discontinuity at $\sigma = \tau$, we calculate
\begin{align}
d|\nabla_{\sigma}^{||}F_{\tau}|^2 &= 2 \langle \nabla_{\sigma}^{||}F_{\tau}, d(\nabla_{\sigma}^{||}F_{\tau}) \rangle \nonumber \\
&= 2 \big \langle \nabla_{\sigma}^{||}F_{\tau}, d(\nabla_{\sigma}^{||}F_{\tau}) \rangle_{\mathrm{cont}} + \left (\nabla_{\sigma}^{||}F_{\sigma^{+}} - \nabla_{\sigma}^{||}F_{\sigma^{-}}  \right ) \, d\delta_{\sigma}(\tau) \big \rangle \nonumber \\
&= d(|\nabla_{\sigma}^{||}F_{\tau}|^2)_{\mathrm{cont}} + 2|\nabla_{\sigma}^{||}F_{\sigma}|^2 \, d\delta_{\sigma}(\tau).
\end{align}
It remains to show that the identity holds for $\sigma \leq \tau$. In particular, it remains to show that the continuous parts of the measures agree.
\newline \\
In the rightly-continuous case by It\^o calculus and Theorem \ref{3.3}, we calculate the quadratic variation  $d[\nabla_{\sigma}^{||}F_{\tau}, \nabla_{\sigma}^{||}F_{\tau}]=2|\nabla_{\tau}^{||}\nabla_{\sigma}^{||}F_{\tau}|^2\, d\tau$ for $\sigma \leq \tau$ and then
\begin{align}
d(|\nabla_{\sigma}^{||}F_{\tau}|^2) &= 2\langle \nabla_{\sigma}^{||}F_{\tau}, d(\nabla_{\sigma}^{||}F_{\tau}) \rangle + d[\nabla_{\sigma}^{||}F_{\tau}, \nabla_{\sigma}^{||}F_{\tau}] \nonumber \\
&= \langle \nabla_{\tau}^{||}|\nabla_{\sigma}^{||}F_{\tau}|^2, dW_{\tau} \rangle + (\dot{g}+2\mathrm{Rc})_{\tau}(\nabla_{\tau}^{||}F_{\tau},\nabla_{\sigma}^{||}F_{\tau})\, d\tau \\
&\quad + 2|\nabla_{\tau}^{||}\nabla_{\sigma}^{||}F_{\tau}|^2\, d\tau \nonumber,
\end{align}
which concludes the proof.
\end{proof}
\begin{corollary} (Bochner) \label{3.5}
The generalized Bochner formula on $P\mathcal{M}$ (Theorem \ref{3.4}) reduces to the standard Bochner formula (Lemma \ref{2.1}) in the case of $1$-point functions. That is, the evolution of $|\nabla H_{T-\tau_{1},T-\tau}f|^2$ for $\tau \leq \tau_{1}$ is given by
\begin{align}
&\frac{1}{2} \left (\partial_{\tau} + \Delta_{g_{T-\tau}} \right )|\nabla H_{T-\tau_{1},T-\tau} f|^{2} \nonumber \\
&\quad = |\nabla^{2} H_{T-\tau_{1},T-\tau}f|^2 + \frac{1}{2} (\dot{g}+2\mathrm{Rc})|_{t=T-\tau}(\nabla H_{T-\tau_{1},T-\tau}f,\nabla H_{T-\tau_{1},T-\tau}f).
\end{align}
\end{corollary}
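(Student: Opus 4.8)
The plan is to specialize the generalized Bochner formula of Theorem \ref{3.4} to the one-point cylinder function $F(\gamma) = f(\pi_{1}\gamma_{\tau_{1}})$ and to read off the drift of the resulting identity between It\^o processes. First I would invoke Example \ref{example 2.18}: for $\tau < \tau_{1}$ the induced martingale is $F_{\tau}(\gamma) = H_{T-\tau_{1},T-\tau}f(X_{\tau})$, where $\gamma_{\tau} = (X_{\tau},T-\tau)$. Writing $u_{\tau} := H_{T-\tau_{1},T-\tau}f \colon M \to \R$, I would identify the parallel gradient and parallel Hessian of $F_{\tau}$ by specializing the computations inside the proofs of Theorems \ref{3.2} and \ref{3.3} to $\ell = 0$ (no frozen evaluation times): since the only evaluation time is $\tau_{1} > \tau$, the sums $\sum_{\tau_{k}\geq\sigma}$ over frozen times are empty, so that for every $\sigma \leq \tau < \tau_{1}$,
\begin{align}
\nabla_{\sigma}^{||}F_{\tau} = P_{\tau}\nabla u_{\tau}(X_{\tau}), \qquad \nabla_{\tau}^{||}\nabla_{\sigma}^{||}F_{\tau} = (P_{\tau}\otimes P_{\tau})\nabla^{2}u_{\tau}(X_{\tau}). \nonumber
\end{align}
In particular $\nabla_{\sigma}^{||}F_{\tau}$ is independent of $\sigma$ in this range, and since $P_{\tau}\colon (T_{X_{\tau}}M,g_{T-\tau})\to(T_{x}M,g_{T})$ is an isometry, one gets $|\nabla_{\sigma}^{||}F_{\tau}|^{2} = |\nabla u_{\tau}|^{2}_{g_{T-\tau}}(X_{\tau})$ and $|\nabla_{\tau}^{||}\nabla_{\sigma}^{||}F_{\tau}|^{2} = |\nabla^{2}u_{\tau}|^{2}_{g_{T-\tau}}(X_{\tau})$; likewise, applying $P_{\tau}^{-1}$, $(\dot g+2\mathrm{Rc})_{\tau}(\nabla_{\tau}^{||}F_{\tau},\nabla_{\sigma}^{||}F_{\tau}) = (\dot g+2\mathrm{Rc})|_{t=T-\tau}(\nabla u_{\tau},\nabla u_{\tau})(X_{\tau})$.

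Next, I would fix $\sigma$ strictly below the range of $\tau$ under consideration, so that the term $\delta_{\sigma}(\tau)\,d\tau$ in Theorem \ref{3.4} drops out, and substitute these identifications. This turns Theorem \ref{3.4} into the assertion that the It\^o process $\tau \mapsto |\nabla u_{\tau}|^{2}_{g_{T-\tau}}(X_{\tau})$ has drift $\left( (\dot g+2\mathrm{Rc})|_{t=T-\tau}(\nabla u_{\tau},\nabla u_{\tau}) + 2|\nabla^{2}u_{\tau}|^{2}_{g_{T-\tau}} \right)(X_{\tau})\,d\tau$. On the other hand, $\phi_{\tau} := |\nabla u_{\tau}|^{2}_{g_{T-\tau}}$ is a time-dependent function on $M$ with controlled growth (by the standing bounds on the geometry), so applying It\^o's lemma on the frame bundle (Proposition \ref{1.9}), exactly as in the computation of the drift of $f_{\tau}(X_{\tau_{1}},\dots,X_{\tau})$ in the proof of Theorem \ref{3.2}, shows the drift of $\phi_{\tau}(X_{\tau})$ equals $(\partial_{\tau}+\Delta_{g_{T-\tau}})\phi_{\tau}(X_{\tau})\,d\tau$; here the parametrization by backward time $T-\tau$ is precisely what produces the operator $\partial_{\tau}+\Delta_{g_{T-\tau}}$, consistently with the fact that $u_{\tau}$ itself solves $(\partial_{\tau}+\Delta_{g_{T-\tau}})u_{\tau}=0$ and hence $F_{\tau}$ is a martingale.

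Equating the two expressions for the drift, and using that the base point $x=X_{0}$ and the Brownian path are arbitrary — so that $X_{\tau}$ realizes an arbitrary point of $M$ for $\tau > 0$ — I would conclude the pointwise identity $(\partial_{\tau}+\Delta_{g_{T-\tau}})|\nabla u_{\tau}|^{2} = (\dot g+2\mathrm{Rc})|_{t=T-\tau}(\nabla u_{\tau},\nabla u_{\tau}) + 2|\nabla^{2}u_{\tau}|^{2}$ on $M$ for every $\tau < \tau_{1}$, which is exactly the claimed formula after dividing by $2$ and recalling $u_{\tau}=H_{T-\tau_{1},T-\tau}f$; the boundary case $\tau=\tau_{1}$ follows by continuity. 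The only genuinely delicate points are bookkeeping ones: keeping the backward-time sign conventions straight so that the left-hand drift comes out as $+\tfrac12(\partial_{\tau}+\Delta_{g_{T-\tau}})$ rather than with a flipped sign, and verifying that for a one-point function no cross-Hessian contributions $(P_{\tau}\otimes P_{\tau_{k}})\nabla^{(\ell+1)}\nabla^{(k)}f_{\tau}$ survive, so that $|\nabla_{\tau}^{||}\nabla_{\sigma}^{||}F_{\tau}|^{2}$ is genuinely $|\nabla^{2}H_{T-\tau_{1},T-\tau}f|^{2}$ and nothing larger. One could of course bypass the stochastic comparison entirely by plugging $u=H_{T-\tau_{1},T-\tau}f$ directly into Lemma \ref{2.1} and rewriting $-\partial_{t}$ as $\partial_{\tau}$, but the route through Theorem \ref{3.4} is what substantiates the phrase ``reduces to'' in the statement.
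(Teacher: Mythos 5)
Your proposal is correct and takes essentially the same route as the paper: identify the parallel gradient and parallel Hessian of the one-point martingale (giving $|\nabla_{\sigma}^{||}F_{\tau}|=|\nabla u_{\tau}|$ and $|\nabla_{\tau}^{||}\nabla_{\sigma}^{||}F_{\tau}|=|\nabla^{2}u_{\tau}|$ for $\sigma\le\tau<\tau_{1}$), read off the drift from Theorem \ref{3.4}, independently compute the drift of $\tau\mapsto|\nabla u_{\tau}|^{2}(X_{\tau})$ via the frame-bundle It\^o formula of Proposition \ref{1.9}, and equate. The only cosmetic difference is that the paper fixes $\sigma=0$ while you keep $\sigma$ in a range that avoids the delta term; both choices are harmless since the parallel gradient is $\sigma$-independent for one-point functions and the drift comparison only concerns the absolutely continuous part.
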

\begin{proof}
Fix $\sigma=0$ in the evolution equation from Theorem \ref{3.4}. Next, we shall compute the evolution of $|\nabla_{0}^{||}F_{\tau}|^{2}$, where
\begin{align}
F_{\tau}(\gamma) :=
\begin{cases} H_{T-\tau_{1},T-\tau}f(\pi_{1}\gamma_{\tau}), \, \tau < \tau_{1} \\
f(\pi_{1}\gamma_{\tau_{1}}), \, \tau \geq \tau_{1}
\end{cases}
\end{align}
is the martingale induced by $f(\pi_{1} \gamma_{\tau_{1}})$. Then, for $\tau \in [0,\tau_{1}]$, we calculate
\begin{align}
|\nabla_{0}^{||}F_{\tau}|(\gamma) &= |\nabla_{\tau}^{||}F_{\tau}|(\gamma) = |\nabla H_{T-\tau_{1},T-\tau} f|(\pi_{1} \gamma_{\tau})
\end{align}
as well as
\begin{align}
|\nabla_{0}^{||}\nabla_{\tau}^{||}F_{\tau}|(\gamma) &= |\nabla^2 H_{T-\tau_{1},T-\tau}f|(\pi_{1}\gamma_{\tau}).
\end{align}
By Theorem \ref{3.4}, we then deduce that
\begin{align} \label{97}
&d(|\nabla H_{T-\tau_{1},T-\tau}f|^{2}) - \langle \nabla_{\tau}^{||} |\nabla H_{T-\tau_{1},T-\tau}|^{2}, dW_{\tau} \rangle \nonumber \\
&\quad = 2|\nabla^{2} H_{T-\tau_{1},T-\tau}f|^2\, d\tau + (\dot{g}+2\mathrm{Rc})|_{t=T-\tau}(\nabla H_{T-\tau_{1},T-\tau}f,\nabla H_{T-\tau_{1},T-\tau}f)\, d\tau
\end{align}
Moreover, for process $X_{\tau}=|\nabla H_{T-\tau_{1},T-\tau} f|^{2}(\pi_{1} \gamma_{\tau})$, by applying It\^o calculus as in Proposition \ref{1.9}, we have that
\begin{align} \label{98}
&d(|\nabla H_{T-\tau_{1},T-\tau}f|^{2}) - \langle \nabla_{\tau}^{||} |\nabla H_{T-\tau_{1},T-\tau}|^{2}, dW_{\tau} \rangle \nonumber \\
&\quad = \left (\partial_{\tau} + \Delta_{g_{T-\tau}} \right )|\nabla H_{T-\tau_{1},T-\tau}f|^{2} \, d\tau.
\end{align}
Therefore, by comparing equations \eqref{97} and \eqref{98}, we conclude that 
\begin{align}
&\frac{1}{2} \left (\partial_{\tau} + \Delta_{g_{T-\tau}} \right )|\nabla H_{T-\tau_{1},T-\tau}f|^{2} \nonumber \\
&\quad = |\nabla^{2} H_{T-\tau_{1},T-\tau}f|^2 + \frac{1}{2} (\dot{g}+2\mathrm{Rc})|_{t=T-\tau}(\nabla H_{T-\tau_{1},T-\tau}f,\nabla H_{T-\tau_{1},T-\tau}f),
\end{align}
which completes the proof.
\end{proof}

\pagebreak

\section{Applications of the Bochner Formula on Parabolic Path Space}

We shall now proceed by applying the Bochner formula on path space to both characterize the Ricci flow and develop gradient and Hessian estimates for martingales on parabolic path space.

\subsection{Proof of the Bochner Inequality on Parabolic Path Space}

\begin{proof}[Proof of Theorem \ref{theorem char}]
Using the formalism developed in the last section, we shall prove the equivalencies between the main estimates that characterize the Ricci flow.
\newline \\
$\mathrm{(R1)} \implies (\mathrm{C1}) \implies (\mathrm{C2}) \implies (\mathrm{C3})$:  If $(M,g_{t})_{t \in I}$ evolves by Ricci flow $\partial_{t}g_{t} = -2\mathrm{Rc}_{g_{t}}$ and $F_{\tau}: P_{(x,T)}\mathcal{M} \rightarrow \R$ is a martingale on parabolic path space, then Theorem \ref{Bochner gen} gives
\begin{align}
d|\nabla_{\sigma}^{||}F_{\tau}|^{2} = \langle \nabla_{\tau}^{||}|\nabla_{\sigma}^{||}F_{\tau}|^{2}, dW_{\tau} \rangle + 2|\nabla_{\tau}^{||}\nabla_{\sigma}^{||}F_{\tau}|^{2}\, d\tau + 2|\nabla_{\sigma}^{||}F_{\sigma}|^{2} d\delta_{\sigma}(\tau),
\end{align}
thus proving $(\mathrm{C1})$.
\newline \\
Next, to show $(\mathrm{C2})$, calculate
\begin{align}
|\Delta_{\sigma,\tau}^{||}F_{\tau}|^{2} &= \left |g^{ij}\left (\nabla_{\sigma}^{||}\nabla_{\tau}^{||}F_{\tau} \right )_{ij} \right |^{2} \leq |g^{ij}|^{2} |\nabla_{\sigma}^{||}\nabla_{\tau}^{||}F_{\tau}|^{2} = n|\nabla_{\sigma}^{||}\nabla_{\tau}^{||}F_{\tau}|^{2},
\end{align}
and finally show $(\mathrm{C3})$ by simply dropping the non-negative term $\frac{2}{n}|\Delta_{\sigma,\tau}^{||}F_{\tau}|^{2}$ in $(\mathrm{C2})$.
\newline \\
$\mathrm{(C1)} \implies (\mathrm{C4}) \iff (\mathrm{C5})$: To prove $(\mathrm{C4})$, first apply It\^o's lemma to the left-hand side of the full Bochner inequality $\mathrm{(C1)}$ to get
\begin{align}
&2|\nabla_{\sigma}^{||}F_{\tau}| \langle \nabla_{\tau}^{|
|}|\nabla_{\sigma}^{||}F_{\tau}|, dW_{\tau} \rangle + 2|\nabla_{\tau}^{||}\nabla_{\sigma}^{||}F_{\tau}|^2\, d\tau + 2|\nabla_{\sigma}^{||}F_{\sigma}|^2 d\delta_{\sigma}(\tau) \nonumber \\
&\quad = \langle \nabla_{\tau}^{|
|}|\nabla_{\sigma}^{||}F_{\tau}|^2, dW_{\tau} \rangle + 2|\nabla_{\tau}^{||}\nabla_{\sigma}^{||}F_{\tau}|^2\, d\tau + 2|\nabla_{\sigma}^{||}F_{\sigma}|^2 \delta_{\sigma}(\tau) \nonumber \\
&\quad \overset{\mathrm{(C1)}}{\leq} d|\nabla_{\sigma}^{||}F_{\tau}|^{2} \nonumber \\
&\quad = 2|\nabla_{\sigma}^{||}F_{\tau}|\, d|\nabla_{\sigma}^{||}F_{\tau}| + d \left [|\nabla_{\sigma}^{||}F_{\tau}|,|\nabla_{\sigma}^{||}F_{\tau}| \right ]_{\tau} \nonumber \\
&\quad = 2|\nabla_{\sigma}^{||}F_{\tau}|\, d|\nabla_{\sigma}^{||}F_{\tau}| + 2|\nabla_{\tau}^{||}|\nabla_{\sigma}^{||}F_{\tau}||^{2}\, d\tau  \nonumber \\
&\quad \leq 2|\nabla_{\sigma}^{||}F_{\tau}|\, d|\nabla_{\sigma}^{||}F_{\tau}| + 2|\nabla_{\tau}^{||}\nabla_{\sigma}^{||}F_{\tau}|^{2}\, d\tau.
\end{align}
Rearranging this inequality and applying $\mathrm{(C1)}$, we derive $\mathrm{(C4)}$, namely
\begin{align}
d|\nabla_{\sigma}^{||}F_{\tau}| \geq \langle \nabla_{\tau}|\nabla_{\sigma}^{||}F_{\tau}|,dW_{\tau} \rangle + |\nabla_{\sigma}^{||}F_{\sigma}|d\delta_{\sigma}(\tau).
\end{align}
Finally, $\mathrm{(C4)}$ is satisfied if and only if $F_{\tau}$ is a submartingale (cf. Theorem \ref{3.2}) $(\mathrm{C5})$ also holds. The remaining equivalencies will be proved in tandem with the results in the subsequent few theorems.
\end{proof}

\subsection{Proof of Gradient Estimates for Martingales}

\begin{proof}[Proof of Theorem \ref{theorem grad}]
$(\mathrm{C5}) \implies (\mathrm{G1}) \implies (\mathrm{G2})$: The implication of $(\mathrm{G1})$ follows from the definition that if $\tau \rightarrow |\nabla_{\sigma}^{||}F_{\tau}|$ is a submartingale for every $\sigma \geq 0$, then for $\tilde{\tau}\geq \tau$, $|\nabla_{\sigma}^{||}F_{\tau}| \leq \mathbb{E} \left [|\nabla_{\sigma}^{||}F_{\tilde{\tau}}| \big | \Sigma_{\tau} \right ]$. Finally, to prove $(\mathrm{G2})$, apply $\mathrm{(G1)}$ and Cauchy-Schwarz to get
\begin{align}
|\nabla_{\sigma}^{||}F_{\tau}|^{2} \leq \left (\mathbb{E} \left [|\nabla_{\sigma}^{||}F_{\tilde{\tau}}| \big | \Sigma_{\tau} \right ] \right )^{2} \leq \mathbb{E} \left [|\nabla_{\sigma}^{||}F_{\tilde{\tau}}|^{2} \big | \Sigma_{\tau} \right ] \cdot \mathbb{E}[1 \big | \Sigma_{\tau}] = \mathbb{E} \left [|\nabla_{\sigma}^{||}F_{\tilde{\tau}}|^{2} \big | \Sigma_{\tau} \right ].
\end{align}
The converse implications shall be proven along with later results.
\end{proof}

\subsection{Proof of Hessian Estimates for Martingales}

\begin{proof}[Proof of Theorem \ref{theorem hess}]
$(\mathrm{C1}) \implies (\mathrm{H1})$: To prove $(\mathrm{H1})$, fix $\sigma \geq 0$ and then integrate $(\mathrm{C1})$ from $0$ to $T$ as well as take expectations
\begin{align}
&\mathbb{E}_{(x,T)} \left [|\nabla_{\sigma}^{||}F|^{2} \right ] - \mathbb{E}_{(x,T)} \left [|\nabla_{\sigma}^{||}F_{\sigma}|^{2}| \right ] \nonumber \\
&\quad \overset{(\mathrm{C1})}{\geq} \mathbb{E}_{(x,T)} \left [\int_{0}^{T} \langle \nabla_{\tau}|\nabla_{\sigma}^{||}F_{\tau}|^{2},dW_{\tau} \rangle \right ] + 2\mathbb{E}_{(x,T)} \left [\int_{0}^{T} |\nabla_{\tau}^{||}\nabla_{\sigma}^{||}F_{\tau}|^{2}\, d\tau \right ] \nonumber \\
&\quad = 2\mathbb{E}_{(x,T)} \left [\int_{0}^{T} |\nabla_{\tau}^{||}\nabla_{\sigma}^{||}F_{\tau}|^{2}\, d\tau \right ].
\end{align}
\noindent $(\mathrm{H1}) \implies (\mathrm{H2})$: To prove $(\mathrm{H2})$, apply It\^o isometry and then integrate $(\mathrm{H1})$ from $0$ to $T$ with respect to $\sigma$ as well as take expectations
\begin{align}
\mathbb{E}_{(x,T)} \left [(F - \mathbb{E}_{(x,T)}[F])^{2} \right ] &= \mathbb{E}_{(x,T)} \left [\int_{0}^{T} |\nabla_{\sigma}^{||}F_{\sigma}|^{2} \, d\sigma \right ] \nonumber \\
&\overset{(\mathrm{H1})}{\leq} \mathbb{E}_{(x,T)} \left [\int_{0}^{T} |\nabla_{\sigma}^{||}F|^{2}\, d\sigma \right ] \\
&\quad - 2\mathbb{E}_{(x,T)} \left [\int_{0}^{T} \int_{0}^{T} |\nabla_{\tau}^{||}\nabla_{\sigma}^{||}F_{\tau}|^{2}\, d\tau\, d\sigma \right ]. \nonumber 
\end{align}
\noindent $(\mathrm{R1}) \implies (\mathrm{H3})$: To prove $(\mathrm{H3})$, let $G=F^2$ and consider the evolution equation for $X_{\tau} := G_{\tau}^{-1}|\nabla^{\mathcal{H}}G_{\tau}|^{2}-2G_{\tau}\log(G_{\tau})$, which satisfies
\begin{align}
dX_{\tau} &= \langle \nabla_{\tau}^{||}X_{\tau}, dW_{\tau} \rangle + 2G_{\tau} \left (\int_{0}^{T} |\nabla_{\tau}^{||}\nabla_{\sigma}^{||}\log(G_{\tau})|^{2}\, d\sigma \right )\, d\tau \nonumber \\
&\quad + G_{\tau}^{-1} \left (\int_{0}^{T} (\dot{g}+2\mathrm{Rc})_{\tau}(\nabla_{\tau}^{||}F_{\tau},\nabla_{\sigma}^{||}F_{\tau}) \, d\sigma \right )\, d\tau \nonumber \\
&\geq \langle \nabla_{\tau}^{||}X_{\tau}, dW_{\tau} \rangle + 2G_{\tau} \left (\int_{0}^{T} |\nabla_{\tau}^{||}\nabla_{\sigma}^{||}\log(G_{\tau})|^{2}\, d\sigma \right )\, d\tau \label{136}
\end{align}
by It\^o calculus and Proposition \ref{3.3} (cf. Proposition 4.23 of \cite{HN18b}). Next, integrate the inequality \eqref{136} from $0$ to $T$ with respect to $\tau$ and take expectations to get
\begin{align}
\mathbb{E}_{(x,T)}[X_{T}] - \mathbb{E}_{(x,T)}[X_{0}] \geq 2 \mathbb{E}_{(x,T)} \left [G_{\tau} \left (\int_{0}^{T} |\nabla_{\tau}^{||}\nabla_{\sigma}^{||}\log(G_{\tau})|^{2}\, d\sigma \right )\, d\tau \right ],
\end{align}
and evaluating the two expectations in the difference, namely
\begin{align}
\mathbb{E}_{(x,T)}[X_{0}] &= \mathbb{E}_{(x,T)}[G_{0}^{-1} |\nabla^{\mathcal{H}}G_{0}|^2 - 2G_{0}\log(G_{0})] \nonumber \\
&= 0 - 2G_{0}\log(G_{0}) \nonumber \\
&= -2\mathbb{E}_{(x,T)}[F^2]\log \left (\mathbb{E}_{(x,T)}[F^2]\right )
\end{align}
and 
\begin{align}
\mathbb{E}_{(x,T)}[X_{T}] &= \mathbb{E}_{(x,T)} \left [G^{-1}|\nabla^{\mathcal{H}}G|^2-2G\log(G) \right ] \nonumber \\
&= \mathbb{E}_{(x,T)}\left [F^{-2}|\nabla^{\mathcal{H}}F^2|^2 \right ] - 2\mathbb{E}_{(x,T)} \left [F^{2}\log(F^{2}) \right ] \nonumber \\
&= 4\mathbb{E}_{(x,T)} \left [|\nabla^{\mathcal{H}}F|^2 \right ]  - 2\mathbb{E}_{(x,T)} \left [F^{2}\log(F^{2}) \right ].
\end{align}
Finally we observe that
\begin{align}
\mathbb{E}_{(x,T)} \left [|\nabla^{\mathcal{H}}F|^2 \right ] = \mathbb{E}_{(x,T)} \left [\int_{0}^{T} |\nabla_{\sigma}^{||}F|^2\, d\sigma \right ],
\end{align}
and then combine this and the aforementioned results to prove the claim.
\end{proof}

\subsection{Proof of the Characterizations of Solutions of the Ricci Flow}

The following result reproves a theorem by Haslhofer and Naber (cf. Theorem 1.22 of \cite{HN18a}), characterizing solutions of the Ricci flow, using the Bochner formulas on path space that were developed in the previous section.
\begin{proof}[Proof of Theorem \ref{theorem Ricci}]
$(\mathrm{G1}) \implies (\mathrm{R2})$: To prove $(\mathrm{R2})$, we evaluate $(\mathrm{G1})$ at $\sigma=\tau=0$,
\begin{align}
\left |\nabla_{x} \mathbb{E}_{(x,t)}[F] \right | = \left |\nabla_{x}F_{0} \right | \overset{(\mathrm{G1})}{\leq} \mathbb{E}_{(x,T)} \left [|\nabla_{0}^{||}F| \big | \Sigma_{0} \right ] = \mathbb{E}_{(x,T)} \left [|\nabla_{0}^{||}F| \right ].
\end{align}
$(\mathrm{G2}) \implies (\mathrm{R3})$: To prove $(\mathrm{R3})$, we evaluate $(\mathrm{G2})$ at $\sigma,\tau = 0$ (and observe that $d[F,F]_{\tau} = 2|\nabla_{\tau}^{||}F_{\tau}|^{2}\, d\tau$ by Theorem \ref{3.2}),
\begin{align}
\mathbb{E}_{(x,T)} \left [\frac{d[F,F]_{\tau}}{d\tau} \right ] &= 2 \mathbb{E}_{(x,T)} \left [|\nabla_{\tau}^{||}F_{\tau}|^{2} \right ] \nonumber \\
&\overset{(\mathrm{G2})}{\leq} 2 \mathbb{E}_{(x,T)} \left [\mathbb{E}_{(x,T)} \left [|\nabla_{\tau}^{||}F|^{2}  \big | \Sigma_{\tau} \right ] \right ] \nonumber \\
&\hspace{0.3em} \leq 2 \mathbb{E}_{(x,T)} \left [|\nabla_{\tau}^{||}F|^{2} \right ]. 
\end{align}
$(\mathrm{G1}) \implies (\mathrm{R4})$: To prove $(\mathrm{R4})$, we set $\sigma=\tau$ and take expectations
\begin{align}
\mathbb{E}_{(x,T)} \left [|\nabla_{\sigma}^{||}F_{\sigma}| \right ] \leq \mathbb{E}_{(x,T)} \left [\mathbb{E}_{(x,T)} \left [|\nabla_{\sigma}^{||}F| \big | \Sigma_{\sigma} \right ] \right ] = \mathbb{E}_{(x,T)} \left [|\nabla_{\sigma}^{||}F| \right ].
\end{align}
Then follow the proof of $(\mathrm{H3})$ in Theorem \ref{theorem hess} and evaluate the expectation $\mathbb{E}_{(x,T)}[X_{\tau_{j}}]$ for $j \in \{1,2\}$, namely
\begin{align}
\mathbb{E}_{(x,T)}[X_{\tau_{j}}] = \mathbb{E}_{(x,T)} \left [G_{\tau_{j}}^{-1}|\nabla^{\mathcal{H}}G_{\tau_{j}}|^{2} \right ] - 2\mathbb{E}_{(x,T)} \left [G_{\tau_{j}}\log(G_{\tau_{j}}) \right ]
\end{align}
and taking differences, where $\mathbb{E}_{(x,T)}[X_{\tau_{2}}\, |\, \Sigma_{\tau_{1}}]-\mathbb{E}_{(x,T)}[X_{\tau_{1}}\, |\, \Sigma_{\tau_{1}}] \geq 0$, as in the earlier proof. It remains to check that
\begin{align}
&\mathbb{E}_{(x,T)}\left [G_{\tau_{2}}^{-1}|\nabla^{\mathcal{H}}G_{\tau_{2}}|^{2}\, |\, \Sigma_{\tau_{1}} \right ] - \mathbb{E}_{(x,T)}\left [G_{\tau_{1}}^{-1}|\nabla^{\mathcal{H}}G_{\tau_{1}}|^{2}\, |\, \Sigma_{\tau_{1}} \right ] \nonumber \\
&\quad = 4\mathbb{E}_{(x,T)} \left [|\nabla^{\mathcal{H}}F_{\tau_{2}}|^{2}\, |\, \Sigma_{\tau_{1}} \right ] \nonumber \\
&\quad = 4\mathbb{E}_{(x,T)} \left [\int_{\tau_{1}}^{\tau_{2}} |\nabla_{\sigma}^{||}F_{\tau_{2}}|^{2}\, d\sigma \right ] \nonumber \\
&\quad \leq 4\mathbb{E}_{(x,T)} \left [\int_{\tau_{1}}^{\tau_{2}} |\nabla_{\sigma}^{||}F|^{2}\, d\sigma \right ]\quad (\tau \rightarrow |\nabla_{\sigma}^{||}F_{\tau}|^{2} \mathrm{\, is\, a\, submartingale}) \nonumber \\
&\quad = 4\mathbb{E}_{(x,T)} \left [\langle F, \mathcal{L}_{(\tau_{1},\tau_{2})}F \rangle_{\mathcal{H}} \right ].
\end{align}
$(\mathrm{G2}) \implies (\mathrm{R5})$: To prove $(\mathrm{R5})$, we set $\sigma = \tau$ and take expectations
\begin{align}
\mathbb{E}_{(x,T)} \left [|\nabla_{\sigma}^{||}F_{\sigma}|^{2} \right ] \leq \mathbb{E}_{(x,T)} \left [\mathbb{E}_{(x,T)} \left [|\nabla_{\sigma}^{||}F|^{2}  \big | \Sigma_{\sigma} \right ] \right ] = \mathbb{E}_{(x,T)} \left [|\nabla_{\sigma}^{||}F|^{2} \right ].
\end{align}
Then follow the proof of $(\mathrm{H2})$ in Theorem \ref{theorem hess} and apply It\^o isometry
\begin{align}
\mathbb{E}_{(x,T)} \left [(F_{\tau_{2}}-F_{\tau_{1}})^{2}\, |\, \Sigma_{\tau_{1}} \right ] &= \mathbb{E}_{(x,T)} \left [\int_{0}^{T} |\nabla_{\sigma}^{||}F_{\tau_{2}}|^{2}\, d\sigma\, \bigg |\, \Sigma_{\tau_{1}} \right ] \nonumber \\
&= \mathbb{E}_{(x,T)} \left [\int_{\tau_{1}}^{\tau_{2}} |\nabla_{\sigma}^{||}F_{\tau_{2}}|^{2}\, d\sigma \right ] \nonumber \\
&\leq \mathbb{E}_{(x,T)} \left [\langle F, \mathcal{L}_{(\tau_{1},\tau_{2})}F \rangle_{\mathcal{H}} \right ].
\end{align}
The converse implications shall be proven in the next section.
\end{proof}

\subsection{Converse Implications}

We shall now prove the converse implications below.

\begin{proof}
$(\mathrm{C3}) \implies (\mathrm{R1})$: First fix $(x,T) \in \mathcal{M}$ and $v\in (T_{x}M,g_{T})$ a unit vector and choose a smooth compactly supported $f_{1}: M \rightarrow \R$ such that
\begin{align}
f_{1}(x)=0, \quad \nabla f_{1}(x)=v, \quad \nabla^{2} f_{1}(x)=0
\end{align}
using exponential coordinates. Consider the one-point cylinder function given by $F(\gamma)=f_{1}(\pi_{M}(\gamma(\varepsilon)))$, $F: P_{(x,T)}\mathcal{M} \rightarrow \R$ and observe for $\tau \leq \varepsilon$ that
\begin{align}
\nabla_{\tau}^{||}F_{\tau} = P_{\tau}\nabla H_{T-\tau,T}f_{1}(\pi_{M}(\gamma(\tau))), \quad |\nabla_{\tau}^{||}\nabla_{0}^{||}F_{\tau}| = |\nabla^{2}H_{T-\tau,T}f_{1}|(\pi_{M}(\gamma(\tau))).
\end{align}
In particular, $\nabla_{\tau}^{||}F_{\tau} = v + o(\varepsilon)$ and $|\nabla_{\tau}^{||}\nabla_{0}^{||}F_{\tau}| = o(\varepsilon)$. Then, by Theorem \ref{3.4}, 
\begin{align}
\tau \rightarrow |\nabla_{0}^{||}F_{\tau}|^{2} - \int_{0}^{\tau} \left (2|\nabla_{\rho}^{||}\nabla_{0}^{||}F_{\rho}|^{2}+(\dot{g}+2\mathrm{Rc})_{\rho}(\nabla_{\rho}^{||}F_{\rho},\nabla_{0}^{||}F_{\rho}) \right )\, d\rho
\end{align}
is a martingale. So, in particular,
\begin{align}
|\nabla_{0}^{||}F_{0}|^{2} = \mathbb{E} \left [|\nabla_{0}^{||}F_{\varepsilon}|^{2}\right ] - \varepsilon (\dot{g}+2\mathrm{Rc})_{\varepsilon}(v,v) + o(\varepsilon).
\end{align}
Moreover, since $\tau \rightarrow |\nabla_{0}^{||}F_{\tau}|^{2}$ is a submartingale by $(\mathrm{C3})$, it follows that
\begin{align} \label{165}
(\dot{g}+2\mathrm{Rc})_{\varepsilon}(v,v) \geq \varepsilon^{-1}o(\varepsilon).
\end{align}
Next choose a smooth compactly supported $f_{2}: M\times M \rightarrow \R$ such that
\begin{align}
f_{2}(x,x)=0, \quad \nabla^{(1)}f_{2}(x,x)=2v, \quad \nabla^{(2)}f_{2}(x,x)=-v, \quad \nabla^{2}f_{2}(x,x)=0,
\end{align}
for example $f_{2}(y,z)=2f_{1}(y)-f_{1}(z)$. Consider the two-point cylinder function given by $F(\gamma)=f_{2}(\pi_{M}(\gamma(0)),\pi_{M}(\gamma(\varepsilon)))$, $F: P_{(x,T)}\mathcal{M} \rightarrow \R$ and observe for $\tau \leq \varepsilon$ that
\begin{align}
\begin{cases}
\nabla_{0}^{||}F_{\tau} &= \nabla^{(1)}f_{2}(x,\pi_{M}(\gamma(\tau))) + P_{\tau}\nabla H_{T-\tau,T}^{(2)}f_{2}(x,\pi_{M}(\gamma(\tau))) \\
\nabla_{\tau}^{||}F_{\tau} &= P_{\tau}\nabla H_{T-\tau,T}^{(2)}f_{2}(x,\pi_{M}(\gamma(\tau))) \\
|\nabla_{\tau}^{||}\nabla_{0}^{||}F_{\tau}| &\leq |\nabla^{2}f_{2}|(x,\pi_{M}(\gamma(\tau))) + |\nabla^{2}H_{T-\tau,T}^{(2)}f_{2}|(x,\pi_{M}(\gamma(\tau))).
\end{cases}
\end{align}
In particular,  $\nabla_{0}^{||}F_{\tau} = v + o(\varepsilon)$, $\nabla_{\tau}^{||}F_{\tau} = -v + o(\varepsilon)$ and $|\nabla_{\tau}^{||}\nabla_{0}^{||}F_{\tau}| = o(\varepsilon)$. Then, again by Theorem \ref{3.4},
\begin{align}
|\nabla_{0}^{||}F_{0}|^{2} = \mathbb{E} \left [|\nabla_{0}^{||}F_{\varepsilon}|^{2}\right ] + \tau(\dot{g}+2\mathrm{Rc})_{\varepsilon}(v,v) + o(\varepsilon).
\end{align}
Moreover, since $\tau \rightarrow |\nabla_{0}^{||}F_{\tau}|^{2}$ is a submartingale by $(\mathrm{C3})$, it follows that 
\begin{align} \label{171}
(\dot{g}+2\mathrm{Rc})_{\varepsilon}(v,v) \leq \tau^{-1}o(\varepsilon).
\end{align}
We can then deduce that $(\mathrm{R1})$ is satisfied by taking $\varepsilon \rightarrow 0^{+}$ in equations \eqref{165} and \eqref{171}.
\newline \\
To check the remaining converse implications, one can substitute 1-point and 2-point cylinder functions as above. However, there are some alternative tools that can close the loop of equivalencies more readily. For example, applying the log-Sobolev equality to $F^2=1+\varepsilon G$ in $(\mathrm{R4})$ gives the Poincar\'e inequality in $(\mathrm{R5})$. Moreover, dividing by $T-\tau$, taking $T-\tau \rightarrow 0^{+}$ and using the quadratic variation $d[F,F]_{\tau} = 2|\nabla_{\tau}^{||}F_{\tau}|^{2}\, d\tau$ (by Theorem \ref{3.2}), $(\mathrm{R3})$ can be derived from $(\mathrm{R5})$. In short, some implications can be done directly without the need to appeal to test functions each time.
\end{proof}

\pagebreak

\bibliographystyle{plain}

\noindent \textsc{Christopher Kennedy, Department of Mathematics, University of Toronto, 40 St. George Street, Toronto, ON M5S 2E4, Canada}
\newline \\
email: christopherpa.kennedy@mail.utoronto.ca


\end{document}